
\documentclass[11pt]{article} 
\usepackage[leqno]{amsmath}
\usepackage{amssymb,amsthm,url}
\usepackage[letterpaper,margin=1in]{geometry}
\usepackage{graphicx}
\usepackage{color}
\numberwithin{equation}{section}
\numberwithin{figure}{section}
\usepackage{enumitem}		
\usepackage{multirow}
\usepackage{tablefootnote}
\usepackage{stmaryrd}
\usepackage{pict2e}

\usepackage[style=alphabetic,backend=bibtex,maxbibnames=99]{biblatex}
\addbibresource{hyperoctahedral_ref.bib}


\definecolor{green}{RGB}{0, 0, 0}
\definecolor{blue}{RGB}{0, 0, 0}

\DeclareMathOperator{\Mat}{Mat}
\DeclareMathOperator{\Prob}{Prob}
\DeclareMathOperator{\Expect}{Expect}
\DeclareMathOperator{\id}{id}
\DeclareMathOperator{\gr}{gr}

\DeclareMathOperator{\sspan}{span}
\DeclareMathOperator{\sym}{sym}
\DeclareMathOperator{\stdbrac}{stdbrac}
\DeclareMathOperator{\des}{des}
\DeclareMathOperator{\wcomp}{wcomp}

\newcommand\calsh{\mathcal{S}}

\DeclareMathOperator{\orif}{BRiffle} 

\date{}

\theoremstyle{plain}
\newtheorem{thm}[equation]{\protect\theoremname}
  \theoremstyle{definition}
  \newtheorem{defn}[equation]{\protect\definitionname}
  \theoremstyle{plain}
  \newtheorem{prop}[equation]{\protect\propositionname}
  \theoremstyle{plain}
  \newtheorem{cor}[equation]{\protect\corollaryname}
  \theoremstyle{plain}
  \newtheorem{lem}[equation]{\protect\lemmaname}
  \theoremstyle{remark}
  \newtheorem*{rem*}{\protect\remarkname}
  \theoremstyle{remark}
  \newtheorem*{rems*}{\protect\remarksname}
  \theoremstyle{definition}
  \newtheorem{example}[equation]{\protect\examplename}
  \theoremstyle{definition}
  \newtheorem{algo}[equation]{\protect\algorithmname}
  \theoremstyle{definition}
  \newtheorem{notn}[equation]{\protect\notationname}
  \theoremstyle{definition}
  \newtheorem{setup}[equation]{\protect\setupname}

  \providecommand{\remarksname}{Remarks}
  \providecommand{\algorithmname}{Algorithm}
\providecommand{\notationname}{Notation}
\providecommand{\setupname}{Setup}

\makeatother

\providecommand{\corollaryname}{Corollary}
\providecommand{\definitionname}{Definition}
\providecommand{\examplename}{Example}
\providecommand{\lemmaname}{Lemma}
\providecommand{\propositionname}{Proposition}
\providecommand{\remarkname}{Remark}
\providecommand{\theoremname}{Theorem}

\begin{document}
\title{The Eigenvalues of Hyperoctahedral Descent Operators and Applications to Card-Shuffling}
\author{C.Y. Amy Pang\thanks{amypang@hkbu.edu.hk}}

\maketitle
\maketitle
\global\long\def\tautilde{\tilde{\tau}}%
\global\long\def\T{\mathbf{T}}%
\global\long\def\f{\mathbf{f}}%
\global\long\def\ftilde{\tilde{\f}}%
\global\long\def\g{\mathbf{g}}%
\global\long\def\hatk{\check{K}}%
\global\long\def\sn{\mathfrak{S}_{n}}%
\global\long\def\sk{\mathfrak{S}_{k}}%
\global\long\def\oriftauplus{\orif\tau_{a}^{+}}%
\global\long\def\oriftauminus{\orif\tau_{a}^{-}}%
\global\long\def\oriftautildeplus{\orif\tautilde_{a}^{+}}%
\global\long\def\oriftautildeminus{\orif\tautilde_{a}^{-}}%

\global\long\def\bark{\bar{k}}%
\global\long\def\barp{\bar{p}}%
\global\long\def\bari{\bar{i}}%
\global\long\def\barj{\bar{j}}%
\global\long\def\barlambda{\bar{\lambda}}%
\global\long\def\leftu{\overleftarrow{u}}%
\global\long\def\rightu{\overrightarrow{u}}%

\global\long\def\calsh{\mathcal{S}}%
\global\long\def\calh{\mathcal{H}}%
\global\long\def\calhn{\mathcal{H}_{n}}%
\global\long\def\calhdual{\mathcal{H}^{*}}%
\global\long\def\calhndual{\mathcal{H}_{n}^{*}}%
\global\long\def\calb{\mathcal{B}}%
\global\long\def\calbn{\mathcal{B}_{n}}%
\global\long\def\calbdual{\mathcal{B}^{*}}%
\global\long\def\calbndual{\mathcal{B}_{n}^{*}}%
\global\long\def\calp{\mathcal{P}}%
\global\long\def\barcalp{\bar{\calp}}%
\global\long\def\cala{\mathcal{A}}%

\begin{abstract}
We extend an algebra of Mantaci and Reutenauer, acting on the free
associative algebra, to a vector space of operators acting on all
graded connected Hopf algebras. These operators are convolution products
of certain involutions, which we view as hyperoctahedral variants
of Patras's descent operators. We obtain the eigenvalues and multiplicities
of all our new operators, as well as a basis of eigenvectors for a
subclass akin to Adams operations. We outline how to apply this eigendata
to study Markov chains, and examine in detail the case of card-shuffles
with flips or rotations.
\end{abstract}

\section{Introduction\label{sec:Introduction}}

\textcolor{green}{The use of graded Hopf algebras to study combinatorics
is increasingly common \cite{jonirota,hivertcspolynomialrealisation,qsymisterminal,hopfmonoidgenpermutahedra}.}
These are graded vector spaces $\calh$ with bases indexed by combinatorial
objects, such as trees \cite{lodayroncotrees,cambrianhopfalg}, graphs
\cite{schmitt} or permutations \cite{fqsym}, admitting a product
$m:\calh\otimes\calh\rightarrow\calh$ and coproduct $\Delta:\calh\to\calh\otimes\calh$
that encode respectively how these objects combine and break apart.
The composition $m\circ\Delta$, or more generally the composition
of iterated product and coproduct maps $m^{[a]}\circ\Delta^{[a]}:\calh\rightarrow\calh$,
are the \emph{Adams operations}, studied in \cite[Sec. 4.5]{lodayhomology}
for their connections to Hochschild homology. Their eigenvalues and
multiplicities are obtained in \cite{diagonalisingusinggrh}, and
\textcolor{green}{applied to derive some} combinatorial identities.
The paper \cite{hopfpowerchains} gives a basis of eigenvectors for
$m^{[a]}\circ\Delta^{[a]}$ on free-commutative or cocommutative algebras,
and interprets the matrix of these Adams operations as the transition
probabilities of a Markov chain on a basis of $\calh$. When $\calh$
is the shuffle algebra, this probabilistic interpretation recovers
the famous Gilbert-Shannon-Reeds riffle-shuffle of a deck of cards
\cite{riffleshuffle}: cut the deck into $a$ piles according to the
multinomial distribution, then interleave the piles together so cards
from the same pile stay in the same relative order. The eigenvectors
then lead to bounds for certain probabilities under repeated shuffles.

We may doctor the Adams operations by refining the coproduct by degree:
$m^{[a]}\circ\Delta_{(d_{1},\dots,d_{a})}$ are the \emph{descent
operators} of Patras \cite{descentoperators}, closely related to
Solomon's descent algebra of the symmetric group \cite{solomondescentalg}.
The paper \cite{descentoperatorchains} finds the eigenvalues and
eigenvectors of their linear combinations, which correspond to more
general card shuffles where the deck is cut into $a$ piles according
to other distributions, related to the Tsetlin library models of dynamic
storage allocation \cite{tsetlinlibrary}. Although much is already
known about such shuffles, the Hopf-algebraic viewpoint extends easily
to handle analogous ``shuffling'' Markov chains on trees, graphs
and permutations, simply by changing the Hopf algebra $\calh$ in
the universal eigendata formulas.

The present paper presents analogous results on the new class of \emph{hyperoctahedral
descent operators}, motivated by numerous card-shuffling models where,
after cutting the deck, some piles are flipped over before combining
\cite{bergeronorthogidem,typebshufflecasino,prydephatarfodevalues,boxmovesbellnumbers};
see Corollary \ref{cor:shuffle3step} below for the precise definition.
\textcolor{green}{Our hyperoctahedral descent operators have the form
}$m\circ(f_{1}\otimes\dots\otimes f_{a})\circ\Delta_{(d_{1},\dots,d_{a})}$,
where each $f_{i}$ is the identity or an involution $\tau$ (satisfying
certain conditions). These generalise the Mantaci-Reutenauer algebra
of endomorphisms on the free associative algebra \cite{mantacireutenauer}.
Our main results are 
\begin{itemize}
\item Proposition \ref{prop:compositionlaw}, that the composition of hyperoctahedral
descent operators on a commutative or cocommutative Hopf algebra follows
the multiplication in the Mantaci-Reutenauer algebra;
\item Theorem \ref{thm:evalues}, a uniform expression for the eigenvalues
and multiplicities for any linear combination of such maps;
\item Theorems \ref{thm:flipriffle-evector}, \ref{thm:rotriffleodd-evector}
and \ref{thm:rotriffleeven-evector}, a complete basis for the non-zero
eigenspaces of $m^{[a]}\circ(\id\otimes\tau\otimes\id\otimes\tau\otimes\dots)\circ\Delta^{[a]}$
on cocommutative Hopf algebras. This is one analogue of an Adams operation,
corresponding to the hyperoctahedral riffle-shuffles of \cite{riffleshuffle,bergeronorthogidem}
where every other pile is flipped. 
\end{itemize}
We note that our conditions on $\tau$ allow rotating cards by 180
degrees in place of flipping; the eigenvalues of some such shuffles
were identified in the unpublished manuscript \cite{diaconissteinberg}
using coloured versions of left regular bands. Thus this paper joins
a long list of works calculating eigendata for shuffling models \cite{prydephatarfodmultiplicities,hyperplanewalk,wilsonshuffleevector,randomtorandom,nadiaeigen}.
As in \cite{descentoperatorchains}, our eigenvector formulas apply
to all graded connected Hopf algebras, and hence can analyse similar
``flipped shuffles'' of trees or permutations. We note that Markov
chains on signed permutations appear in the study of genome rearrangement
\cite{biosignedpermutation1,biosignedpermutation2}, to compare the
chromosomes of two species. Further, since the theory of the Mantaci-Reutenauer
algebra extends easily to cyclic operators in place of involutions
\cite[Sec. 6]{mantacireutenauer}, the results here should be generalisable
to the much greater class of ``cyclic descent operators'' without
much complications, diagonalising the coloured shuffles of \cite{colouredeuleriandescentalg}. 

The paper is structured as follows: Section \ref{sec:hyperoctahedral-descentop}
defines the hyperoctahedral descent operators and derives their composition
law and eigenvalues. Section \ref{sec:oriffle} specialises to $m^{[a]}\circ(\iota\otimes\tau\otimes\iota\otimes\tau\otimes\dots)\circ\Delta^{[a]}$,
finding their eigenvectors. This ends the purely algebraic part. Section
\ref{sec:chain} explains minor modifications to \cite{descentoperatorchains},
to associate a Markov chain to each hyperoctahedral descent operator
acting on a combinatorial Hopf algebra. Section \ref{sec:Card-Shuffling}
applies the eigenvector formulas to give the probabilities of certain
statistics under hyperoctahedral riffle-shuffles, with flips or rotations.

Following the related Coxeter nomenclature, we refer to our new hyperoctahedral
theory as ``type B'', and the original descent operators as ``type
A''.

Acknowledgements: We thank Ronan Conlon and Eric Marberg for inspiring
this project, and Marcelo Aguiar, Persi Diaconis, Rafael Gonz\'alez
D'Le\'on and Nadia Lafreni\`{e}re for many helpful comments. SAGE
computer software \cite{sage} was very useful, especially the combinatorial
Hopf algebras coded by Aaron Lauve and Franco Saliola.

The author is supported by the grant RGC-ECS 22300017.

\section{Hyperoctahedral Descent Operators\label{sec:hyperoctahedral-descentop}}

This work concerns \emph{graded connected Hopf algebras} - that is,
a graded vector space $\calh=\bigoplus_{n=0}^{\infty}\calh_{n}$  (over
$\mathbb{R}$) equipped with a linear product map $m:\calh_{i}\otimes\calh_{j}\rightarrow\calh_{i+j}$
and a linear coproduct map $\Delta:\calh_{n}\to\bigoplus_{i=0}^{n}\calh_{i}\otimes\calh_{n-i}$
satisfying certain associativity, coassociativity, and compatibility
axioms \cite{vicreinernotes}. Connectedness means that $\dim\calh_{0}=1$,
so we will identify $\calh_{0}$ with $\mathbb{R}$, writing $\calh_{0}=\sspan\{1\}$.
In many examples where $\calh$ has
a basis indexed by combinatorial objects, $m$ and $\Delta$ can be
interpreted respectively as rules for combining and breaking these
objects, as in our two main examples:
\begin{example}
\label{ex:shufflealg} Fix $N\in\mathbb{N}$, and consider $\mathcal{A:}=\{1,2,\dots,N,\bar{1},\bar{2},\dots,\bar{N}\}$,
where $\bari$ represents a ``negative'' version of $i$. We may
consider the shuffle algebra on this alphabet, whose basis is the
set of words $w_{1}\dots w_{n}$ on $\mathcal{A}$, representing a
deck of cards \textcolor{blue}{with card $w_{1}$ on top, card $w_{2}$
second from the top, and so on, so card $w_{n}$ is at the bottom.}
There are two useful interpretations for $\bari$: either it is
a rotation of card $i$ by $180^{\circ}$, or it is card $i$ flipped
upside down. We may call this the \emph{signed shuffle algebra} to
differentiate it from the (unsigned) shuffle algebra on the alphabet
$\{1,2,\dots,N\}$, which is a quotient under the identification of
$i$ with $\bari$.

\textcolor{blue}{The degree of a word is its number of letters, i.e.
the number of cards in the deck. The product of two words} $u$ and
$v$ \textcolor{blue}{is the sum of all their interleavings} \textcolor{green}{(meaning
all words containing the letters in $u$ and $v$, in the same relative
order as in $u$ and $v$)}, representing the shuffling of two decks.
The coproduct is deconcatenation, or cutting the deck: 
\[
\Delta(w_{1}\dots w_{n})=\sum_{i=0}^{n}w_{1}\dots w_{i}\otimes w_{i+1}\dots w_{n}.
\]
For example:
\begin{align*}
m(1\bar{5}\otimes\bar{3}\otimes\bar{2}) & =1\bar{5}\bar{3}\bar{2}+1\bar{5}\bar{2}\bar{3}+1\bar{2}\bar{5}\bar{3}+\bar{2}1\bar{5}\bar{3}+1\bar{3}\bar{5}\bar{2}+1\bar{3}\bar{2}\bar{5}+1\bar{2}\bar{3}\bar{5}+\bar{2}1\bar{3}\bar{5}\\
 & \phantom{=}+\bar{3}1\bar{5}\bar{2}+\bar{3}1\bar{2}\bar{5}+\bar{3}\bar{2}1\bar{5}+\bar{2}\bar{3}1\bar{5}.
\end{align*}

As Section \ref{subsec:Hyperoctahedral-Riffle-Shuffles} will explain,
the action of hyperoctahedral descent operators on the signed shuffle
algebra induce card-shuffling models.
\end{example}

\begin{example}
\label{ex:freeassalg} On the alphabet $\mathcal{A}$ of the previous
example, we may construct a free associative algebra, which for clarity
we'll term the \emph{signed free associative algebra}. Its basis is
also the set of words on $\mathcal{A}$. The product is concatenation,
representing placing one deck above another, and the coproduct is
``deshuffling'', i.e. 
\begin{align*}
m(u_{1}\dots u_{n}\otimes v_{1}\dots v_{m}) & =u_{1}\dots u_{n}v_{1}\dots v_{m};\\
\Delta(w_{1}\dots w_{n}) & =\sum_{S\subseteq\{1,2,\dots,N\}}\prod_{i\in S}w_{i}\otimes\prod_{i\notin S}w_{i}.
\end{align*}
For example:
\begin{align*}
m(1\bar{5}\otimes\bar{3}\bar{2})=(1\bar{5})(\bar{3}\bar{2}) & =1\bar{5}\bar{3}\bar{2};\\
\Delta(3\bar{1}6) & =\emptyset\otimes3\bar{1}6+3\otimes\bar{1}6+\bar{1}\otimes36+6\otimes3\bar{1}\\
 & \phantom{=}+3\bar{1}\otimes6+36\otimes\bar{1}+\bar{1}6\otimes3+3\bar{1}6\otimes\emptyset.
\end{align*}
As explained five paragraphs below, the signed free associative algebra
is dual to the signed shuffle algebra, which means the eigenvectors
for hyperocatedral descent operators acting on the signed free associative
algebra \textcolor{green}{give probabilistic results about card shuffles
- see Section \ref{subsec:oriffle-descentpeak}.}
\end{example}

Note that the product structures of the above two examples were considered
in \cite{bergeronorthogidem}.

In any Hopf algebra, we abuse notation and write $m$ for the product
of more than two factors, e.g. in the signed shuffle algebra, 
\begin{align*}
m(1\bar{5}\otimes\bar{3}\otimes\bar{2}) & =1\bar{5}\bar{3}\bar{2}+1\bar{5}\bar{2}\bar{3}+1\bar{2}\bar{5}\bar{3}+\bar{2}1\bar{5}\bar{3}+1\bar{3}\bar{5}\bar{2}+1\bar{3}\bar{2}\bar{5}\\
 & \phantom{=}+\bar{3}1\bar{5}\bar{2}+\bar{3}1\bar{2}\bar{5}+\bar{3}\bar{2}1\bar{5}.
\end{align*}
Dually, because of coassociativity, there is a well-defined notion
of splitting a combinatorial object into $l$ parts: $\Delta^{[l]}:\calh\rightarrow\calh^{\otimes l}$
defined inductively by $\Delta^{[2]}=\Delta$, $\Delta^{[l]}=(\Delta^{[l-1]}\otimes\id)\circ\Delta$.
It is useful to constrain the ``sizes of the parts'' by projecting
to graded subspaces after applying $\Delta^{[l]}$. Namely, for a
\emph{weak-composition} $D=(d_{1},\dots,d_{l(D)})$ of $n$ (i.e.
non-negative integers $d_{i}$ summing to $n$), set $\Delta_{D}:\calhn\rightarrow\calh_{d_{1}}\otimes\dots\otimes\calh_{d_{l(D)}}$.
For example, in the signed shuffle algebra, $\Delta_{2,1,1}(1\bar{5}\bar{3}\bar{2})=1\bar{5}\otimes\bar{3}\otimes\bar{2}$
and $\Delta^{[3]}(1\bar{5}\bar{3}\bar{2})=1\bar{5}\otimes\bar{3}\otimes\bar{2}+1\otimes\bar{5}\bar{3}\otimes\bar{2}+1\bar{5}\bar{3}\otimes\emptyset\otimes\bar{2}+\emptyset\otimes\emptyset\otimes1\bar{5}\bar{3}\bar{2}+\dots$
(11 more terms).

A key idea for determining the spectrum and eigenvectors of hyperoctahedral
descent operators is the \emph{primitive} subspace of a Hopf algebra:
$Prim(\calh)=\{x\in\calh|\Delta(x)=1\otimes x+x\otimes1\}$. Because
of the counit axiom, $(Prim\calh)\cap\calhn$ is equivalently characterised
by $\{x\in\calhn|\Delta(x)\subseteq\calh_{0}\otimes\calhn+\calhn\otimes\calh_{0}\}$.
$Prim(\calh)$ is a Lie algebra, meaning that it is closed under the
\emph{Lie bracket}: if $x,y\in Prim(\calh)$, then $[x,y]:=xy-yx\in Prim(\calh)$.

A Hopf algebra $\calh$ is \emph{commutative} if $wz=zw$ for all
$w,z\in\calh$. And $\calh$ is \emph{cocommutative} if, for all $x\in\calh$,
its coproduct $\Delta(x)$ is invariant under the swapping of tensor-factors,
i.e. $\Delta(x)=\sum_{i}w_{i}\otimes z_{i}=\sum_{i}z_{i}\otimes w_{i}$. 

\textcolor{blue}{Given a graded connected Hopf algebra $\calh=\bigoplus_{n\geq0}\calhn$,
the symmetry of the Hopf axioms allows the definition of a Hopf structure
on the (graded) dual vector space $\calhdual:=\oplus_{n\geq0}\calhndual$:
for $f,g\in\calhdual$, set
\[
m(f\otimes g)(x):=(f\otimes g)(\Delta x),\quad\Delta(f)(w\otimes z)=f(wz),
\]
with $x,z,w\in\calh$. (Here, $(f\otimes g)(a\otimes b)=f(a)g(b)$.)
}Note that the dual of a commutative Hopf algebra is cocommutative,
and vice versa.

The signed shuffle algebra is commutative, and is dual to the signed
free associative algebra, which is cocommutative.

\subsection{The Involutions $\tau$ and $\protect\tautilde$\label{subsec:tau}}

\textcolor{green}{Here are various adjectives describing endomorphisms
of a Hopf algebra, of which the last is new and convenient for this
work.}
\begin{defn}
Let $\calh$ be a graded connected Hopf algebra, and $f:\calh\rightarrow\calh$
a function.
\begin{itemize}
\item $f$ is \emph{graded} if $f(\calhn)\subseteq\calhn$;
\item $f$ is an \emph{involution} if $f\circ f=\id$;
\item $f$ is an \emph{algebra morphism} if $f(wz)=f(w)f(z)$ for all $w,z\in\calh$;
\item $f$ is an \emph{algebra antimorphism} if $f(wz)=f(z)f(w)$ for all
$w,z\in\calh$;
\item $f$ is a \emph{coalgebra morphism} if, whenever $\Delta(x)=\sum_{i}w_{i}\otimes z_{i}$,
then $\Delta(f(x))=\sum_{i}f(w_{i})\otimes f(z_{i})$;
\item $f$ is a \emph{coalgebra antimorphism} if, whenever $\Delta(x)=\sum_{i}w_{i}\otimes z_{i}$,
then $\Delta(f(x))=\sum_{i}f(z_{i})\otimes f(w_{i})$;
\item $f$ is a \emph{Hopf morphism} if it is both an algebra morphism and
coalgebra morphism.
\item \emph{$f$ is Hopf ambimorphism} if it is either an algebra morphism
or algebra antimorphism, and also either a coalgebra morphism or coalgebra
antimorphism.
\end{itemize}
\end{defn}

Let $\tau:\calh\rightarrow\calh$ denote an involution that is linear,
graded and a Hopf ambimorphism. Given the definition of ambimorphism,
there are $2\times2=4$ possible combinations for how $\tau$ interacts
with the Hopf structure of $\calh$, and some general results (e.g.
Theorem \ref{thm:evalues} concerning the spectrum of associated descent
operators) apply equally in all four cases. \textcolor{green}{Where
the four possibilities behave differently, we may use $\tau$ and
$\tautilde$ to emphasise the differences, where $\tau$ usually includes
the case of a Hopf morphism and perhaps also other cases.}

Note that, if $\calh$ is commutative, then an algebra morphism is
also an algebra antimorphism; dually, if $\calh$ is cocommutative,
then a coalgebra morphism is also a coalgebra antimorphism.
\begin{example}
Recall the signed shuffle algebra of Example \ref{ex:shufflealg}.
Since this Hopf algebra is commutative, there are two possible types
of Hopf-ambimorphism: we may let $\tau$ be an algebra morphism and
coalgebra morphism, or we may let $\tautilde$ be an algebra morphism
and coalgebra antimorphism. For the card-shuffling applications in
this work, we focus on the involutions from \cite{bergeronorthogidem}:
let $\tau$ model the rotation of a deck by $180^{\circ}$(where $\bari$
denotes a rotated copy of card $i$), and let $\tautilde$ model the
flipping of a deck (where $\bari$ denotes card $i$ facing down).
In terms of words, we define
\begin{align}
\tau(i) & :=\bari;\nonumber \\
\tau(\bari) & :=i;\nonumber \\
\tau(w_{1}\dots w_{n}) & :=\tau(w_{1})\dots\tau(w_{n});\label{eq:tau-shufflealg}
\end{align}
\begin{align}
\tautilde(i) & :=\bari;\nonumber \\
\tautilde(\bari) & :=i;\nonumber \\
\tautilde(w_{1}\dots w_{n}) & :=\tautilde(w_{n})\dots\tautilde(w_{1});\label{eq:tautilde-shufflealg}
\end{align}
and extend linearly. For example, $\tau(3\bar{1}6)=\bar{3}1\bar{6}$
whilst $\tautilde(3\bar{1}6)=\bar{6}1\bar{3}$. The crucial difference
between rotating and flipping a deck is that flipping reverses the
top-to-bottom order of the cards while rotation does not - this is
why rotation is a Hopf morphism but flipping is not.

Equations (\ref{eq:tau-shufflealg}) and (\ref{eq:tautilde-shufflealg})
can also be applied in the signed free associative algebra, then $\tau$
is a Hopf morphism, and $\tautilde$ is an algebra antimorphism and
a coalgebra morphism. (These involutions are actually dual to the
$\tau$ and $\tautilde$ defined by the same equations on the signed
shuffle algebra.)

Further, $\tautilde$ is compatible with the quotient of the signed
free associative algebra to the unsigned free associative algebra;
in other words, we may define 
\begin{equation}
\tautilde(w_{1}\dots w_{n}):=w_{n}\dots w_{1}\label{eq:tautilde-unsignedshufflealg}
\end{equation}
 on the unsigned free associative algebra, which is an algebra antimorphism
and a coalgebra morphism. 
\end{example}

\textcolor{green}{Below is another example that is interesting for
future study.}
\begin{example}
The paper \cite{cambrianhopfalg} defines a Hopf algebra structure
$Camb^{*}$ on Cambrian trees, a type of planar binary tree where
each internal node has either two upward edges and one downward edge,
or two downward edges and one upward edge. Loosely speaking, the product
assembles one tree on top of another (vertical), and the coproduct
divides a tree into left and right (horizontal). Thus horizontal reflection
is an algebra morphism and a coalgebra antimorphism; vertical reflection
is an algebra antimorphism and coalgebra morphism; $180^{\circ}$-rotation
is an algebra antimorphism and a coalgebra antimorphism. Indeed, $180^{\circ}$-rotation
is a composition of the two reflections, and it is easy to check that
composing two commuting involutive Hopf-ambimorphisms always yields
another involutive Hopf-ambimorphism. 
\end{example}

The following notation will be useful:
\begin{defn}
\label{def:invt-ngtg-subspaces} Let $V$ be a vector space and $\tau:V\rightarrow V$
be a linear map. Then the\emph{ $\tau$-invariant part} of $V$, written
$V^{\tau}$, is the eigenspace of eigenvalue 1. The \emph{$\tau$-negating
part} of $V$, written $V^{-\tau}$, is the eigenspace of eigenvalue
-1. 
\end{defn}

Please note the following easy linear algebra lemma:
\begin{lem}
If $\tau$ is an involution on a vector space $V$, then $V=V^{\tau}\oplus V^{-\tau}$.
\end{lem}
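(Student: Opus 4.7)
The plan is to use the standard averaging/anti-averaging trick: given any $v \in V$, write
\[
v = \tfrac{1}{2}\bigl(v+\tau(v)\bigr) + \tfrac{1}{2}\bigl(v-\tau(v)\bigr).
\]
First I would verify that $\tfrac{1}{2}(v+\tau(v)) \in V^{\tau}$, using $\tau\circ\tau = \id$ to compute $\tau(v+\tau(v)) = \tau(v)+v$. Similarly $\tfrac{1}{2}(v-\tau(v)) \in V^{-\tau}$ since $\tau(v-\tau(v)) = \tau(v)-v = -(v-\tau(v))$. This exhibits $V = V^{\tau} + V^{-\tau}$.

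Next I would show the sum is direct by checking $V^{\tau} \cap V^{-\tau} = \{0\}$: if $v$ lies in both, then $v = \tau(v) = -v$, so $2v = 0$, and since the ambient scalars are $\mathbb{R}$ (the paper works over $\mathbb{R}$, as stated in Section \ref{sec:hyperoctahedral-descentop}), we conclude $v=0$.

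The proof requires no real obstacle; the only subtlety is the implicit use of $\tfrac{1}{2}$, which is harmless in characteristic zero. No Hopf-algebraic structure is needed — only that $\tau$ is a linear involution — so the lemma is a purely linear-algebraic statement that will be invoked later to decompose $\calh$ (and its tensor powers) into $\tau$- and $\tautilde$-eigenspaces.
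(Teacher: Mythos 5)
Your proof is correct and coincides with the paper's: both use the decomposition $v=\tfrac{1}{2}(v+\tau(v))+\tfrac{1}{2}(v-\tau(v))$ together with the trivial intersection of eigenspaces for distinct eigenvalues. Nothing further is needed.
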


\begin{proof}
Clearly $V^{\tau}+V^{-\tau}$ is a direct sum since eigenspaces of
different eigenvalues have trivial intersection. Given any $v\in V$,
we have $v+\tau(v)\in V^{\tau}$ and $v-\tau(v)\in V^{-\tau}$, so
$v=\frac{v+\tau(v)}{2}+\frac{v-\tau(v)}{2}\in V^{\tau}\oplus V^{-\tau}$.
\end{proof}

To derive eigenvalues and eigenvectors, it will be important to consider
the $\tau$-invariant and $\tau$-negating parts of another subspace.
\begin{lem}
If $\calh$ is a graded connected Hopf algebra and $\tau:\calh\rightarrow\calh$
is a coalgebra morphism or coalgebra antimorphism, then $\tau$ preserves
the primitive subspace, i.e. $\tau(Prim(\calh))\subseteq Prim(\calh)$.
\end{lem}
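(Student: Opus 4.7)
The plan is to exploit the alternative characterization of the primitive subspace already stated in the excerpt: a homogeneous $x \in \calhn$ is primitive if and only if $\Delta(x) \in \calh_{0}\otimes\calhn + \calhn\otimes\calh_{0}$. This characterization is much more convenient than the literal defining equation $\Delta(x)=1\otimes x+x\otimes 1$ because it does not explicitly mention the unit element $1$, so one avoids having to verify that $\tau(1)=1$ as an intermediate step.

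Given $x \in Prim(\calh)$, decompose $x$ into its homogeneous components; since $Prim(\calh)$ is a graded subspace, it suffices to treat $x \in \calhn$. Then $\Delta(x) = \sum_i w_i \otimes z_i$ with each summand lying in $\calh_{0}\otimes\calhn$ or $\calhn\otimes\calh_{0}$. Because $\tau$ is graded, the map $\tau\otimes\tau$ preserves the subspace $\calh_{0}\otimes\calhn + \calhn\otimes\calh_{0}$, and because this subspace is also invariant under the tensor-swap, I can handle both the coalgebra-morphism and coalgebra-antimorphism cases uniformly. In the morphism case, $\Delta(\tau(x)) = \sum_i \tau(w_i)\otimes\tau(z_i)$ still lies in $\calh_{0}\otimes\calhn + \calhn\otimes\calh_{0}$; in the antimorphism case, $\Delta(\tau(x)) = \sum_i \tau(z_i)\otimes\tau(w_i)$ lies in the same subspace. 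Either way, $\tau(x)$ satisfies the alternative characterization, hence is primitive.

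There is essentially no obstacle beyond being careful that the argument handles both cases in parallel. The only subtle point to verify is that the alternative characterization of $Prim(\calh)\cap\calhn$ really is equivalent to the original one on a graded connected Hopf algebra, but this is quoted as a known fact just before the lemma and follows from the counit axiom together with $\calh_0 = \sspan\{1\}$.
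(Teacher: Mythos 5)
Your proof is correct and follows essentially the same route as the paper: the paper likewise applies the coalgebra (anti)morphism property to $\Delta(x)$ and then invokes the same degree-based characterisation of $(Prim\,\calh)\cap\calhn$, applied to the homogeneous parts of $x$, so that $\tau(1)=1$ never needs to be checked. The only difference is cosmetic — you work with the containment $\Delta(x)\in\calh_{0}\otimes\calhn+\calhn\otimes\calh_{0}$ throughout, while the paper starts from $\Delta(x)=1\otimes x+x\otimes1$ and passes to the characterisation at the final step.
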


\begin{proof}
If $x\in Prim(\calh)$, then $\Delta(x)=1\otimes x+x\otimes1$. If
$\tau$ is a coalgebra morphism, then $\Delta(\tau(x))=\tau(1)\otimes\tau(x)+\tau(x)\otimes\tau(1)$.
If $\tau$ is a coalgebra antimorphism, then $\Delta(\tau(x))=\tau(x)\otimes\tau(1)+\tau(1)\otimes\tau(x)$.
In both cases, $\tau(x)$ is primitive, by the equivalent characterisation
just before Section \ref{subsec:tau} (applied linearly to the parts
of $x$ in each degree).
\end{proof}
By the same argument, $\tau$ preserves the ``coradical filtration'',
which allows important associated graded operators to be defined in
Lemma \ref{lem:reduce-to-gr} below, for computing the eigenvalues
of hyperoctahedral descent operators.

\subsection{Hyperoctahedral Descent Operators\label{subsec:hdos}}

The elementary hyperoctahedral descent operators are indexed by \emph{signed}
and \emph{tilde-signed weak-compositions}, where some parts are decorated
with $-$ or with $\simeq$ respectively. For example, $(\bar{2},4,0,\bar{2})$
is a signed weak-composition. We do not allow the two types of decorations
to occur together in the same weak-composition. And, unlike in \cite{mantacireutenauer},
we also do not allow parts to have a tilde without a sign, since this
does not \textcolor{green}{correspond natually to an endomorphism
of a Hopf algebra}. These objects are also called bi-coloured compositions
in \cite{boxmovesbellnumbers}.

Given a signed or tilde-signed weak-compositions $D=(d_{1},\dots,d_{l})$,
let $D^{+}$ denote the weak-composition formed from forgetting the
decorations in $D$. We may also use absolute value signs to denote
this, e.g. $D^{+}=(|d_{1}|,\dots,|d_{l}|)$. For example, $(\bar{2},4,0,\bar{2})^{+}=(2,4,0,2)$,
as $|\bar{2}|=2$.
\begin{defn}
\label{def:hdo} Let $\calh$ be a graded Hopf algebra, and $\tau:\calh\rightarrow\calh$
be an involution that is linear, graded and a Hopf morphism. Let $D$
be a signed weak-composition. Then 

\begin{itemize}
\item the \emph{signed refined coproduct} $\Delta_{D(\tau)}$ is the composite:
first apply the refined coproduct $\Delta_{D^{+}}$, then apply $\tau$
to the tensorands corresponding to the decorated parts.
\item the \emph{elementary signed descent operators }are the composites
$m\circ\Delta_{D(\tau)}$.
\item the \emph{signed descent operators} are linear combinations of the
$m\circ\Delta_{D(\tau)}$ over different signed weak-compositions
$D$.
\end{itemize}
All three maps have tilde-signed analogues, by using tilde-signed
weak-compositions, and an involution $\tautilde$ that is a Hopf ambimorphism
but not a Hopf morphism. We refer to both families of operators as
\emph{hyperoctahedral descent operators}. In numerical examples, when
$\tau$ is clear from the context, we often write $m\circ\Delta_{D}$
in place of $m\circ\Delta_{D(\tau)}$.
\end{defn}

For example, $m\circ\Delta_{1,\tilde{\bar{2}}}=m\circ(\id\otimes\tautilde)\circ\Delta_{1,2}$
is an elementary tilde-signed descent operator. In the shuffle algebra,
using $\tautilde$ as defined in (\ref{eq:tautilde-shufflealg}),
\[
m\circ\Delta_{1,\tilde{\bar{2}}}(3\bar{1}6)=m\circ(\id\otimes\tautilde)(3\otimes\bar{1}6)=m(3\otimes\bar{6}1)=3\bar{6}1+\bar{6}31+\bar{6}13.
\]
And $m\circ\Delta_{\bar{2},4,\bar{2}}+m\circ\Delta_{\bar{3},5}=m\circ(\tau\otimes\id\otimes\tau)\circ\Delta_{2,4,2}+m\circ(\tau\otimes\id)\circ\Delta_{3,5}$
is a signed descent operator. 

Note that the dual of $m\circ\Delta_{D(\tau)}$ is $m\circ\Delta_{D(\tau^{*})}$
on the dual Hopf algebra, where $\tau^{*}:\calhdual\rightarrow\calhdual$
denotes the linear-algebraic dual map. This will be important when
considering right eigenfunctions of the associated Markov chains.

If $\calh$ is commutative or cocommutative, then the vector space
of signed descent operators (for a fixed $\tau$) is closed under
composition, and is isomorphic to the Mantaci-Reutenauer algebra.
The vector space of tilde-signed descent operators is also closed
under composition. The case where $\calh$ is the signed free associative
algebra and $\tau,\tautilde$ are as in (\ref{eq:tau-shufflealg})
and (\ref{eq:tautilde-shufflealg}) is part of Theorems 2.2, 3.8 and
Corollary 5.3 of \cite{mantacireutenauer}. Minor modifications suffice
to extend the proof to the general case. Stating the composition formula
requires the following concepts:
\begin{defn}
\label{def:compatible-mtx} Let $D=(d_{1},\dots,d_{l})$ and $D'=(d'_{1},\dots,d'_{l'})$
be two signed weak-compositions or two tilde-signed weak-compositions.
An $l\times l'$ matrix $M=(M_{i,j})$ of integers is called \emph{compatible
with $D$ and $D'$} (the order matters) if the following two conditions
are satisfied:
\begin{enumerate}
\item The row sums of $M^{+}$ are $D^{+}$, and the column sums of $M^{+}$
are $(D')^{+}$, i.e. $\sum_{j=1}^{l'}|M_{i,j}|=|d_{i}|$ for each
$i\in[1,l]$, and $\sum_{i=1}^{l}|M_{i,j}|=|d'_{j}|$ for each $j\in[1,l']$.
\item The sign of $M_{i,j}$ is the product of the signs of $d_{i}$ and
$d'_{j}$.
\end{enumerate}
Let $\Mat(D,D')$ denote the set of matrices compatible with $D$
and $D'$. For $M\in\Mat(D,D')$, let $\wcomp(M)$ denote the signed
weak-composition formed by reading left to right across each row from
the top row to the bottom row 
\[
\wcomp(M)=(M_{1,1},M_{1,2},\dots,M_{1,l'},M_{2,1,},\dots M_{2,l'},\dots,M_{l,1},\dots,M_{l,l'}).
\]
And let $\widetilde{\wcomp_{D}}(M)$ denote the tilde-signed weak-composition
formed by reading each row from the top row to the bottom row, where
the $i$th row is read left to right if $d_{i}$ is positive, and
read right to left if $d_{i}$ is negative (i.e. decorated).

\end{defn}

\begin{example}
Let $D=(\bar{2},\bar{4},1)$ and $D'=(\bar{2},5)$. Then $\Mat(D,D')$
consists of the five matrices below:
\[
\begin{array}{cc}
0 & \bar{2}\\
1 & \bar{3}\\
\bar{1} & 0
\end{array},\begin{array}{cc}
1 & \bar{1}\\
0 & \bar{4}\\
\bar{1} & 0
\end{array},\begin{array}{cc}
1 & \bar{1}\\
1 & \bar{3}\\
\bar{0} & 1
\end{array},\begin{array}{cc}
2 & \bar{0}\\
0 & \bar{4}\\
\bar{0} & 1
\end{array},\begin{array}{cc}
0 & \bar{2}\\
2 & \bar{2}.\\
\bar{0} & 1
\end{array}
\]
If $M$ is the first matrix in this list, then $\wcomp(M)=(0,\bar{2},1,\bar{3},\bar{1},0)$.

Now take $D=(\tilde{\bar{2}},\tilde{\bar{4}},1)$, $D'=(\tilde{\bar{2}},5)$
and 
\[
M=\begin{array}{cc}
0 & \tilde{\bar{2}}\\
1 & \tilde{\bar{3}}\\
\tilde{\bar{1}} & 0
\end{array}\in\Mat(D,D').
\]
Then $\widetilde{\wcomp_{D}}(M)=(\tilde{\bar{2}},0,\tilde{\bar{3}},1,\tilde{\bar{1}},0)$.
\end{example}

\begin{prop}
\label{prop:compositionlaw} Under the setup of Definitions \ref{def:hdo}
and \ref{def:compatible-mtx}:
\begin{enumerate}
\item if $\calh$ is commutative, then
\begin{align*}
\left(m\circ\Delta_{D(\tau)}\right)\circ\left(m\circ\Delta_{D'(\tau)}\right) & =\sum_{M\in\Mat(D',D)}m\circ\Delta_{\wcomp(M)};\\
\left(m\circ\Delta_{D(\tautilde)}\right)\circ\left(m\circ\Delta_{D'(\tautilde)}\right) & =\sum_{M\in\Mat(D',D)}m\circ\Delta_{\widetilde{\wcomp_{D'}}(M)};
\end{align*}
\item if $\calh$ is cocommutative, then
\begin{align*}
\left(m\circ\Delta_{D(\tau)}\right)\circ\left(m\circ\Delta_{D'(\tau)}\right) & =\sum_{M\in\Mat(D,D')}m\circ\Delta_{\wcomp(M)};\\
\left(m\circ\Delta_{D(\tautilde)}\right)\circ\left(m\circ\Delta_{D'(\tautilde)}\right) & =\sum_{M\in\Mat(D,D')}m\circ\Delta_{\widetilde{\wcomp_{D}}(M)}.
\end{align*}
\end{enumerate}
\end{prop}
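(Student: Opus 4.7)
The plan is to compute both sides on a generic element $x \in \calhn$ by applying the bialgebra axiom to the middle composite $\Delta_{D^+} \circ m$, just as in the classical (unsigned) Patras composition law, and separately tracking the involutions $\tau$ or $\tautilde$ via their (anti)morphism properties. Writing $\Delta_{D(\tau)} = T_D \circ \Delta_{D^+}$, where $T_D := \tau^{\epsilon_1} \otimes \cdots \otimes \tau^{\epsilon_l}$ and $\epsilon_i \in \{0,1\}$ records whether $d_i$ is decorated, the left-hand side becomes $m \circ T_D \circ \Delta_{D^+} \circ m \circ T_{D'} \circ \Delta_{(D')^+}$, with the analogous expression for the tilde case.

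I first handle case (ii) with $\tau$ a Hopf morphism on cocommutative $\calh$. Applying the composite to $x$, the inner half produces $\prod_j \tau^{\epsilon'_j}(x^{(j)})$, where the $x^{(j)}$ are the pieces of $\Delta_{(D')^+}(x)$. The outer $\Delta_{D^+}$ expands this product via the bialgebra axiom into a sum indexed by non-negative integer matrices $M = (M_{i,j})$ with row sums $D^+$ and column sums $(D')^+$. Using that $\tau$ is a coalgebra morphism, each inner coproduct satisfies $\Delta^{[l]} \circ \tau^{\epsilon'_j} = (\tau^{\epsilon'_j})^{\otimes l} \circ \Delta^{[l]}$, so that the $(i,j)$-piece reads $\tau^{\epsilon'_j}(x^{(j),(i)})$. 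Then $T_D$ introduces $\tau^{\epsilon_i}$ on each $i$-th tensor slot, and using $\tau$ as an algebra morphism distributes this across the product inside that slot. The involution property $\tau^2 = \id$ collapses the total exponent on the $(i,j)$-piece to $\epsilon_i + \epsilon'_j \pmod 2$, which matches the sign of $M_{i,j}$ from Definition \ref{def:compatible-mtx}. Finally, applying $m$ and using cocommutativity to rearrange tensor factors into the row-major order of $\wcomp(M)$, together with coassociativity to identify the iterated splittings with $\Delta_{\wcomp(M)^+}(x)$, yields the formula.

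For case (ii) with $\tautilde$, which must be an algebra antimorphism (since cocommutativity collapses the coalgebra distinction), the only change is that $\tautilde$ reverses the order of products: whenever $\epsilon_i = 1$, the factors inside the $i$-th tensor slot are multiplied in reverse $j$-order, which is precisely the reading direction encoded in $\widetilde{\wcomp_D}(M)$. Case (i) on commutative $\calh$ is symmetric: commutativity now plays the role cocommutativity played before, and the natural matrix indexing has rows governed by the inner splitting $D'$ and columns by the outer $D$, giving $M \in \Mat(D', D)$ with the transposed indexing. In case (i) with $\tautilde$, it is the coalgebra antimorphism property that reverses the inner splitting pieces, thereby producing $\widetilde{\wcomp_{D'}}(M)$. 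The main technical obstacle is exactly this bookkeeping: verifying across all four cases that the direction of the reversal caused by each antimorphism property agrees with the corresponding definition of $\widetilde{\wcomp_D}$ versus $\widetilde{\wcomp_{D'}}$, and that the matrix indexing transposes correctly between case (i) and case (ii). The computation is essentially one argument repeated in four dual guises, but easy to get wrong without careful attention to which morphism property is reversing which ordering.
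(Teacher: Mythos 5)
Your proposal is correct and is essentially the paper's own argument: rewrite each operator as $m\circ(\tau^{\epsilon_1}\otimes\cdots)\circ\Delta_{D^+}$, use the bialgebra compatibility to expand $\Delta_{D^+}\circ m$ into a matrix-indexed sum, push the involutions through via the coalgebra-(anti)morphism and algebra-(anti)morphism properties, collapse exponents with $\tau^2=\id$ to recover the sign rule for $M_{i,j}$, and use commutativity (respectively cocommutativity) to reorganize the factors into $\wcomp(M)$ or $\widetilde{\wcomp}(M)$, with the antimorphism reversals accounting exactly for the $\widetilde{\wcomp_{D'}}$ versus $\widetilde{\wcomp_{D}}$ reading directions. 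The only difference is presentational (you compute elementwise on $x$ rather than at the operator level with the explicit permutation $\sigma$), which does not change the substance.
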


Composition formulas for more complex hyperoctahedral descent operators
can be obtained by taking linear combinations of the above. For example,
\cite[Th. 4.2]{typebshufflecasino} treats the composition of $\sum_{D}m\circ\Delta_{D(\tautilde)}$
on the unsigned shuffle algebra, where the sum ranges over all tilde-signed
weak-compositions of $a$ parts and with a fixed sequence of $a$
signs. As another example, Proposition \ref{prop:riffle-composition}
will derive the composition formula for hyperoctahedral riffle-shuffles.
\begin{proof}
We follow the argument of \cite{mantacireutenauer}. Let $D=(d_{1},\dots,d_{l})$
and $D'=(d'_{1},\dots,d'_{l'})$ be signed weak-compositions. For
$i\in[1,l]$ and $j\in[1,l']$, define 
\[
f_{i}=\begin{cases}
\id & \text{if }d_{i}\geq0;\\
\tau & \text{if }d_{i}<0;
\end{cases}
\]
\[
g_{j}=\begin{cases}
\id & \text{if }d'_{j}\geq0;\\
\tau & \text{if }d'_{j}<0.
\end{cases}
\]
Then
\begin{align*}
 & \left(m\circ\Delta_{D(\tau)}\right)\circ\left(m\circ\Delta_{D'(\tau)}\right)\\
= & \left(m\circ(f_{1}\otimes\dots\otimes f_{l})\circ\Delta_{D^{+}}\right)\circ\left(m\circ(g_{1}\otimes\dots\otimes g_{l'})\circ\Delta_{D'^{+}}\right)\\
= & \sum m\circ(f_{1}\otimes\dots\otimes f_{l})\circ m\circ\sigma\circ(\Delta_{D_{1}}\otimes\dots\otimes\Delta_{D_{l'}})\circ(g_{1}\otimes\dots\otimes g_{l'})\circ\Delta_{D'^{+}}
\end{align*}
by the compatibility axiom between $m$ and $\Delta$. Here, the sum
is over all $l'$-tuples $(D_{1},\dots,D_{l'})$ of weak-compositions
of $|d'_{1}|,\dots,|d'_{l'}|$ respectively, that satisfy $(D_{1})_{i}+\dots+(D_{l'})_{i}=|d_{i}|$.
And $\sigma:\calh^{\otimes ll'}\rightarrow\calh^{\otimes ll'}$ is
a permutation of the tensorands corresponding to reading a matrix
in columns instead of rows: 
\begin{align*}
\sigma(x_{1}\otimes\dots\otimes x_{ll'}) & =(x_{1}\otimes x_{l+1}\otimes\dots\otimes x_{l(l'-1)+1})\\
 & \qquad\otimes(x_{2}\otimes\dots\otimes x_{l(l'-1)+2})\otimes\dots\\
 & \qquad\otimes(x_{l'}\otimes\dots\otimes x_{ll'}).
\end{align*}
Because $g_{j}$ are coalgebra morphisms, and $f_{i}$ are algebra
morphisms, the above is equal to 
\begin{align}
 & \sum m\circ(f_{1}^{\otimes l'}\otimes\dots\otimes f_{l}^{\otimes l'})\circ\sigma\circ(g_{1}^{\otimes l}\otimes\dots\otimes g_{l'}^{\otimes l})\circ(\Delta_{D_{1}}\otimes\dots\otimes\Delta_{D_{l'}})\circ\Delta_{D'^{+}}\label{eq:compositionlaw1}\\
= & \sum m\circ(f_{1}^{\otimes l'}\otimes\dots\otimes f_{l}^{\otimes l'})\circ\sigma\circ(g_{1}^{\otimes l}\otimes\dots\otimes g_{l'}^{\otimes l})\circ\Delta_{D_{1}\cdot\dots\cdot D_{l'}}\label{eq:compositionlaw2}
\end{align}
where $D_{1}\cdot\dots\cdot D_{l'}$ is a concatenation of weak-compositions.
This last equality follows from coassociativity.

If $\calh$ is commutative, then $m\circ\sigma=m$, so (\ref{eq:compositionlaw2})
is equal to 
\begin{align}
 & \sum m\circ\sigma\circ\left((f_{1}\otimes\dots\otimes f_{l})^{\otimes l'}\right)\circ(g_{1}^{\otimes l}\otimes\dots\otimes g_{l'}^{\otimes l})\circ\Delta_{D_{1}\cdot\dots\cdot D_{l'}}\nonumber \\
= & \sum m\circ\left(f_{1}\circ g_{1}\otimes f_{2}\circ g_{1}\otimes\dots\otimes f_{l}\circ g_{1}\right.\label{eq:compositionlaw3}\\
 & \qquad\quad\otimes f_{1}\circ g_{2}\otimes\dots\otimes f_{l}\circ g_{2}\otimes\dots\nonumber \\
 & \qquad\qquad\left.\otimes f_{1}\circ g_{l'}\otimes\dots\otimes f_{l}\circ g_{l'}\right)\circ\Delta_{D_{1}\cdot\dots\cdot D_{l'}}\nonumber \\
= & \sum_{M\in\Mat(D',D)}m\circ\Delta_{\wcomp(M)}.\nonumber 
\end{align}
If $\calh$ is cocommutative, then 
\[
\sigma\circ\Delta_{D_{1}\cdot\dots\cdot D_{l'}}=\Delta_{((D_{1})_{1},(D_{2})_{1},\dots,(D_{l'})_{1},(D_{1})_{2},\dots,(D_{l'})_{2},\dots,(D_{l})_{1},\dots,(D_{l'})_{l}},
\]
so (\ref{eq:compositionlaw2}) is equal to 

\begin{align*}
 & \sum m\circ(f_{1}^{\otimes l'}\otimes\dots\otimes f_{l}^{\otimes l'})\circ((g_{1}\otimes\dots\otimes g_{l'})^{\otimes l})\circ\sigma\circ\Delta_{D_{1}\cdot\dots\cdot D_{l'}}\\
= & \sum_{M\in\Mat(D,D')}m\circ\left(f_{1}\circ g_{1}\otimes f_{1}\circ g_{2}\otimes\dots\otimes f_{l}\circ g_{l'}\right.\circ\\
 & \qquad\qquad\qquad\otimes f_{2}\circ g_{1}\otimes\dots\otimes f_{2}\circ g_{l'}\otimes\dots\\
 & \qquad\qquad\qquad\left.\otimes f_{l}\circ g_{1}\otimes\dots\otimes f_{l}\circ g_{l'}\right)\circ\Delta_{\wcomp(M)^{+}}\\
= & \sum_{M\in\Mat(D,D')}m\circ\Delta_{\wcomp(M)}.
\end{align*}
Now let $D$, $D'$ be tilde-signed weak-compositions, and let $\tautilde$
be an algebra morphism and a coalgebra antimorphism on a commutative
$\calh$. \textcolor{green}{Performing the analogous calculations,
the only change is that,} in (\ref{eq:compositionlaw1}), each $\Delta_{D_{j}}$
must be calculated using the reverse weak-composition of $D_{j}$
if $d'_{j}$ is tilde-signed. This change persists through to (\ref{eq:compositionlaw3}),
where additionally $f_{j}\circ g_{1}\otimes\dots\otimes f_{j}\circ g_{l'}$
must be replaced by $f_{j}\circ g_{l'}\otimes\dots\otimes f_{j}\circ g_{1}$
whenever $d'_{j}$ is tilde-signed. Thus the result is a sum of $m\circ\Delta_{\widetilde{\wcomp_{D}}(M)}$
terms instead of $m\circ\Delta_{\wcomp(M)}$. The case is similar
when $\tautilde$ is an algebra antimorphism and a coalgebra morphism
on a cocommutative $\calh$.
\end{proof}

\subsection{Eigenvalues\label{subsec:hdo-evalues}}

Just as the eigenvalues of type A descent operators are indexed by
partitions, so the eigenvalues of hyperoctahedral descent operators
are indexed by \emph{double-partitions} of $n$ - that is, pairs of
partitions $\lambda,\barlambda$ with $\sum_{i}\lambda_{i}+\sum_{i}\barlambda_{i}=n$.
\textcolor{green}{(It's more convenient here to view partitions as
multisets of integers, not arranged in any particular order.)} The
values of these eigenvalues themselves are again in terms of set-compositions,
now of signed integers, compatible with $\lambda,\barlambda,D$ as
defined below.
\begin{defn}
Given a set $S$, a \emph{set-composition} of $S$ is a sequence $B_{1,}\dots,B_{l}$
of (possibly empty) disjoint subsets of $S$ with $B_{1}\amalg\dots\amalg B_{l}=S$.
A set-composition is usually written as $B=B_{1}|\dots|B_{l}$, and
the $B_{i}$ are called \emph{blocks}.
\end{defn}

\begin{defn}
Fix a double-partition $\lambda,\barlambda$ of $n$.
\begin{itemize}
\item Given a weak-composition $D$ (not signed) of $n$, a set-composition
$B_{1}\amalg\dots\amalg B_{l(D)}$ of $\{1,2,\dots,l(\lambda)\}\cup\{\bar{1},\bar{2},\dots,\overline{l(\barlambda)}\}$
is called \emph{compatible with $\lambda,\barlambda,D$} if, for
each $i\leq l(D)$, we have $\sum_{j\in B_{i}}\lambda_{j}+\sum_{\bar{j}\in B_{i}}\barlambda_{j}=d_{i}$.
\item Given a signed or tilde-signed weak-composition $D$, let $\beta_{\lambda,\barlambda}^{D}$
denote a signed count of set-compositions compatible with \emph{$\lambda,\barlambda,D^{+}$.}
\textcolor{green}{The sign of each set-composition is the parity of
signed integers within a block corresponding to a decorated part of
D. }
\end{itemize}
\end{defn}

\begin{example}
Let $n=10$ and $D=(\bar{5},4,1)$. The set-compositions compatible
with the double-partition $\lambda=(4,2),\barlambda=(2,1,1)$, $D^{+}$
are $1\bar{2}|\bar{1}2|\bar{3}$ (corresponding to $D^{+}=(\lambda_{1}+\barlambda_{2},\barlambda_{1}+\lambda_{2},\barlambda_{3})$),
$1\bar{3}|\bar{1}2|\bar{2}$, $\bar{1}2\bar{2}|1|\bar{3}$, $\bar{1}2\bar{3}|1|\bar{2}$.
Since $D$ is decorated only in the first part, the sign of a compatible
set-composition is $-1$ if there are an odd number of signed integers
in the first block, as in $1\bar{2}|\bar{1}2|\bar{3}$ and $1\bar{3}|\bar{1}2|\bar{2}$.
For $\bar{1}2\bar{2}|1|\bar{3}$ and $\bar{1}2\bar{3}|1|\bar{2}$,
the sign is $+1$ as their first blocks contain an even number of
signed integers. Thus $\beta_{\lambda,\barlambda}^{D}=-1-1+1+1=0$
in this example. 
\end{example}

\begin{thm}
\label{thm:evalues} Let $\calh=\bigoplus\calhn$ be a graded connected
Hopf algebra over $\mathbb{R}$, $\tau:\calh\rightarrow\calh$ a linear
graded involution that is a Hopf-ambimorphism. Let $D$ be a signed
or tilde-signed weak-composition of $n$. 

\begin{enumerate}
\item The eigenvalues of the associated hyperoctahedral descent operator
$m\circ\Delta_{D(\tau)}:\calhn\rightarrow\calhn$ are $\beta_{\lambda,\barlambda}^{D}$,
as $\lambda,\barlambda$ ranges over all double-partitions of $n$.
\item The multiplicity of the eigenvalue $\beta_{\lambda,\barlambda}^{D}$
is the coefficient of $x_{\lambda,\barlambda}:=x_{\lambda_{1}}\dots x_{\lambda_{l(\lambda)}}\bar{x}_{\barlambda_{1}}\dots\bar{x}_{\barlambda_{l(\barlambda)}}$
in the generating function $\prod_{i}(1-x_{i})^{-b_{i}}\prod_{i}(1-\bar{x}_{i})^{-\bar{b}_{i}}$,
where the numbers $b_{i},\bar{b_{i}}$ satisfy the identities 
\begin{align}
\sum_{n}\dim\calhn x^{n} & =\prod_{i}(1-x^{i})^{-b_{i}-\bar{b_{i}}},\label{eq:multiplicity1}\\
\sum_{n}(\dim\calhn^{\tau}-\dim\calhn^{-\tau})x^{n} & =\prod_{i}(1-x^{i})^{-b_{i}}\prod_{i}(1+x^{i})^{-\bar{b}_{i}}.\label{eq:multiplicity2}
\end{align}
\item Furthermore, if $D_{1},\dots,D_{r}$ are signed weak-compositions
of $n$, or are tilde-signed weak-compositions of $n$, and $a_{1},\dots,a_{r}\in\mathbb{R}$,
then the eigenvalues of $\sum_{i=1}^{r}a_{i}m\circ\Delta_{D_{i}(\tau)}:\calhn\rightarrow\calhn$
are $\sum_{i=1}^{r}a_{i}\beta_{\lambda,\barlambda}^{D_{i}}$, with
multiplicities as above.
\end{enumerate}
\end{thm}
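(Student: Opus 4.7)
The plan is to follow the template of \cite{diagonalisingusinggrh} for the type A statement and refine it to track the action of $\tau$. First I invoke the (foreshadowed) Lemma \ref{lem:reduce-to-gr} to replace $\calh$ by its associated graded Hopf algebra $\gr\calh$ with respect to the coradical filtration: since $\tau$ preserves this filtration (as noted just before Section \ref{subsec:hdos}), it descends to an involutive Hopf-ambimorphism of the same flavour on $\gr\calh$, and the eigenvalues and multiplicities of $m\circ\Delta_{D(\tau)}$ are unchanged. Dualising if necessary, I may then assume $\gr\calh$ is cocommutative, so by Cartier--Milnor--Moore (in characteristic zero) it is isomorphic, as a graded Hopf algebra, to the symmetric algebra on the primitive subspace $V:=Prim(\gr\calh)$. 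Because $\tau$ is a coalgebra (anti)morphism, it preserves $V$ and decomposes it as $V=V^{\tau}\oplus V^{-\tau}$. Set $b_{i}:=\dim V^{\tau}_{i}$ and $\bar b_{i}:=\dim V^{-\tau}_{i}$, fix homogeneous bases $\{p_{i}\}$ of $V^{\tau}$ and $\{\bar p_{j}\}$ of $V^{-\tau}$, and take symmetrised monomials in these primitives as a basis of $\gr\calh$; such a monomial is indexed by a double-partition $(\lambda,\barlambda)$ recording the degrees of its unbarred and barred factors, together with a choice of basis vectors filling those slots.

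Next, I compute the action of $m\circ\Delta_{D(\tau)}$ directly on a symmetrised monomial $u$ of shape $(\lambda,\barlambda)$. Since each factor of $u$ is primitive, the refined coproduct $\Delta_{D^{+}}$ distributes the $l(\lambda)+l(\barlambda)$ factors among the $l(D)$ tensor slots in every way whose slotwise degree totals equal $D^{+}$; these distributions are in bijection with the set-compositions of $\{1,\dots,l(\lambda)\}\cup\{\bar 1,\dots,\overline{l(\barlambda)}\}$ compatible with $\lambda,\barlambda,D^{+}$. Applying $\tau$ inside each decorated slot multiplies the corresponding term by $(-1)^{\#\{\text{barred factors in that slot}\}}$, and the final $m$ reassembles the original symmetrised monomial $u$. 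Summing over compatible set-compositions therefore returns $u$ multiplied precisely by $\beta_{\lambda,\barlambda}^{D}$, proving (i) and furnishing a simultaneous eigenbasis for all $D$, which yields (iii) by linearity.

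For (ii), the $(\lambda,\barlambda)$-eigenspace has as a basis those symmetrised monomials of shape $(\lambda,\barlambda)$, so its dimension is exactly the coefficient of $x_{\lambda,\barlambda}$ in $\prod_{i}(1-x_{i})^{-b_{i}}\prod_{i}(1-\bar x_{i})^{-\bar b_{i}}$. The identities (\ref{eq:multiplicity1}) and (\ref{eq:multiplicity2}) characterise $b_{i},\bar b_{i}$ intrinsically in terms of $\calh$ and $\tau$: (\ref{eq:multiplicity1}) is the usual Hilbert series identity for the symmetric algebra on $V$ (with $\dim V_{i}=b_{i}+\bar b_{i}$); (\ref{eq:multiplicity2}) computes the graded trace of $\tau$ on the symmetric algebra, using that $\tau$ has $b_{i}$ eigenvalues $+1$ and $\bar b_{i}$ eigenvalues $-1$ on $V_{i}$, giving $\prod_{i}(1-x^{i})^{-b_{i}}(1+x^{i})^{-\bar b_{i}}$. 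Both identities transport back from $\gr\calh$ to $\calh$ because passage to the associated graded preserves $\dim\calhn$ and the $\tau$-eigenspace dimensions in each degree.

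The main obstacle is the foundational Lemma \ref{lem:reduce-to-gr}: one must verify that passing to $\gr\calh$ genuinely preserves the full spectrum (not merely the characteristic polynomial) of hyperoctahedral descent operators, and that this goes through uniformly for all four flavours of Hopf-ambimorphism. Duality handles two of the four cases, since $\tau^{*}$ on $\calhdual$ remains a graded involutive Hopf-ambimorphism and the dual of $m\circ\Delta_{D(\tau)}$ is again a hyperoctahedral descent operator; this collapses the commutative cases onto the cocommutative ones. Once that reduction is in hand, the eigenvalue and multiplicity computations above are a direct bookkeeping exercise on symmetrised monomials.
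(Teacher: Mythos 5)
There is a genuine gap, and it sits at the heart of your second paragraph. Cartier--Milnor--Moore does \emph{not} say that a graded connected cocommutative Hopf algebra is the symmetric algebra on its primitives; it says it is the \emph{universal enveloping algebra} of the Lie algebra of primitives, which is noncommutative in general (e.g.\ the dual of the associated graded of the signed shuffle algebra is a signed free associative algebra). Consequently your key step ``the final $m$ reassembles the original symmetrised monomial $u$'' fails: the products of the same primitives in different orders are not equal, and the symmetrised monomial is \emph{not} an eigenvector once $\tau$-negating primitives are present. Concretely, take $p\in(Prim\calh)^{\tau}$, $\barp\in(Prim\calh)^{-\tau}$ of degree $1$ in a cocommutative, noncommutative $\calh$ and $D=(\bar{1},1)$: then
\begin{equation*}
m\circ\Delta_{D(\tau)}(p\barp+\barp p)=2(p\barp-\barp p)=2[p,\barp]\neq0,
\end{equation*}
while $\beta_{(1),(1)}^{D}=0$, so $p\barp+\barp p$ is only a generalised eigenvector. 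This is not a repairable slip in wording: the paper explicitly remarks (and Sage confirms) that hyperoctahedral descent operators need not be diagonalisable on cocommutative Hopf algebras, so no eigenbasis of the kind you assert can exist, and your argument for part (ii) (``the $(\lambda,\barlambda)$-eigenspace has as a basis those symmetrised monomials of shape $(\lambda,\barlambda)$'') is likewise false, also because distinct shapes can share the same value of $\beta_{\lambda,\barlambda}^{D}$.

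The correct completion, which is the route the paper takes, keeps your reduction to $(\gr\calh)^{*}$ (that part of your proposal matches the paper's Lemma \ref{lem:reduce-to-gr}) but replaces the symmetric-algebra picture by a PBW basis of the universal enveloping algebra: ordering the primitives with the $\tau$-invariant ones before the $\tau$-negating ones, one shows (Triangularity Lemma) that $m\circ\Delta_{D(\tau)}$ sends $p_{1}\dots p_{k}\barp_{1}\dots\barp_{\bark}$ to $\beta_{\lambda,\barlambda}^{D}\,p_{1}\dots p_{k}\barp_{1}\dots\barp_{\bark}$ plus PBW terms of strictly smaller length, by exactly the distribution-and-sign bookkeeping you describe followed by PBW straightening. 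Triangularity (simultaneous over all $D$) then gives the eigenvalues, their algebraic multiplicities, and part (iii); your leading-coefficient count of compatible set-compositions is correct and is precisely the diagonal entry. Your final paragraph's trace argument for (\ref{eq:multiplicity1}) and (\ref{eq:multiplicity2}) survives in this setting, since $\tau$ is itself triangular on the PBW basis with diagonal entries $(-1)^{\bark}$, so the graded trace computation is unaffected.
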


Note that, if $\tau=\id$, then the theorem recovers the type A case
\cite[Th. 3.5]{descentoperatorchains}, as all relevant set-compositions
have positive sign.\textcolor{green}{{} For other $\tau$, as long as
its action on $\calh$ gives the generic case where $b_{i},\bar{b}_{i}$
are all non-zero, the eigenvalues are the same regardless of $\tau$;
only the multiplicities depend on $\tau$. (If some $b_{i}$ or $\bar{b}_{i}$
is zero, then some of the generic eigenvalues might not be achieved.)}
\begin{example}
We compute the eigenvalues of $m\circ\Delta_{(\bar{1},n-1)}$. (Under
the correspondence of \textcolor{green}{Theorem \ref{thm:chain-3step}},
this operator models the shuffle that removes the top card from a
deck of $n$ cards, rotates it by 180 degrees, then reinserts it in
a uniformly chosen position.) For a double-partition $\lambda,\barlambda$,
the parts of size 1 in $\lambda$ or $\barlambda$ are in bijection
with set-compositions compatible with $\lambda,\barlambda,(1,n-1)$,
by considering the first block. A part of size 1 in $\lambda$ induces
a positive sign, whereas a part from $\barlambda$ induces a negative
sign. Hence, if $1(\lambda)$ denotes the number of parts of size
1 in $\lambda$, we have $\beta_{\lambda,\barlambda}^{(\bar{1},n-1)}=1(\lambda)-1(\barlambda)$.
Since $1(\lambda)$ and $1(\barlambda)$ can take any two values
in $\{0,1,\dots,n\}$ that sum to $\{0,1,\dots,n-2\}\cup\{n\}$, the
generic eigenvalues are $\{-n\}\cup\{-n+2,-n+1,\dots,n-2\}\cup\{n\}$.
(To obtain the eigenvalues of the card shuffle, we should divide by
$n$, i.e. they are $\frac{k}{n}$ for $k\in\{-n\}\cup\{-n+2,-n+1,\dots,n-2\}\cup\{n\}$.)
\end{example}

\begin{example}
\label{ex:evalue-oriftau2minus} We compute the eigenvalues of $m\circ(\tautilde\otimes\id)\circ\Delta=\sum_{i=0}^{n}m\circ\Delta_{(\tilde{\bar{i}},n-i)}$;
this operator is $\orif\tautilde_{2}^{-}$ in the notation of Section
\ref{sec:oriffle}, and represents the shuffle that cuts the deck
binomially, flips the first pile upside-down, then interleave the
piles together. The paper \cite{prydephatarfodevalues} previously
computed these eigenvalues for a related inverse shuffle,\textcolor{green}{{}
that corresponds to the case of} the unsigned free associative algebra,
with $\tautilde$ as in (\ref{eq:tautilde-unsignedshufflealg}).

Fix a double-partition $\lambda,\barlambda$. Every set-composition
of $\{1,2,\dots,l(\lambda)\}\cup\{\bar{1},\bar{2},\dots,\overline{l(\barlambda)}\}$
into two blocks is compatible with $\lambda,\barlambda,(\tilde{\bar{i}},n-i)$
for exactly one $i$, namely $i=\sum_{j\in B_{1}}\lambda_{j}+\sum_{\bar{j}\in B_{1}}\barlambda_{j}$.
The associated sign is the parity of signed integers in the first
block. If $\barlambda\neq\emptyset$, then there is a sign-reversing
involution on these set-compositions: if $\bar{1}\in B_{1}$, move
it to $B_{2}$, and if $\bar{1}\in B_{2}$, move it to $B_{1}$. Hence
the signed count $\sum_{i=0}^{n}\beta_{\lambda,\barlambda}^{(\tilde{\bar{i}},n-i)}$
is 0. If $\bar{\lambda}=\emptyset$, then the associated eigenvalue
$\sum_{i=0}^{n}\beta_{\lambda,\barlambda}^{(\tilde{\bar{i}},n-i)}$
is the number of set-compositions of $\{1,2,\dots,l(\lambda)\}$ into
two blocks, which is $2^{l(\lambda)}$. Since $l(\lambda)$ can take
any value in $\{1,2,\dots,n\}$ when $\bar{\lambda}=\emptyset$, the
generic eigenvalues are $2,4,\dots2^{n}$ and 0. (The eigenvalues
of the card shuffle require dividing by $2^{n}$, i.e. they are $2^{-k}$
for $k\in\{0,1,\dots,n-1\}$, and 0.)
\end{example}

\begin{example}
\label{ex:randomflip-evalue} Consider the following new type of card
shuffle: deconcatenate the deck into two piles according to the binomial
distribution, then flip a coin for each pile to decide whether or
not to flip it before interleaving the piles together. According to
Theorem \ref{thm:chain-3step}, the corresponding hyperoctahedral
descent operator (up to scaling by $2^{n+2}$) is $\sum m\circ\Delta_{D}$,
summing over all tilde-signed weak-compositions $D$ of $n$ with
two parts. So its eigenvalues are $\sum_{i=0}^{n}\beta_{\lambda,\barlambda}^{(i,n-i)}+\beta_{\lambda,\barlambda}^{(\bar{i},n-i)}+\beta_{\lambda,\barlambda}^{(i,\overline{n-i})}+\beta_{\lambda,\barlambda}^{(\bar{i},\overline{n-i})}$.
Each of the $2^{l(\lambda)+l(\barlambda)}$ set-compositions of $\{1,2,\dots,l(\lambda)\}\cup\{\bar{1},\bar{2},\dots,\overline{l(\barlambda)}\}$
into two blocks contributes $+1$ to $\beta_{\lambda,\barlambda}^{(i,n-i)}$
and $(-1)^{l(\barlambda)}$ to $\beta_{\lambda,\barlambda}^{(\bar{i},\overline{n-i})}$,
where $i=\sum_{j\in B_{1}}\lambda_{j}+\sum_{\bar{j}\in B_{1}}\barlambda_{j}$.
And, by a sign-reversing involution as in Example \ref{ex:evalue-oriftau2minus},
$\sum_{i=0}^{n}\beta_{\lambda,\barlambda}^{(\bar{i},n-i)}=\sum_{i=0}^{n}\beta_{\lambda,\barlambda}^{(i,\overline{n-i})}=0$
if $\barlambda\neq\emptyset$, and else is $2^{l(\lambda)}$. Hence
the eigenvalues are 
\[
\sum\beta_{\lambda,\barlambda}^{D}=\begin{cases}
2^{l(\lambda)+2} & \text{if }l(\barlambda)=0;\\
2^{l(\lambda)+l(\barlambda)+1} & \text{if }l(\barlambda)>0\text{ is even};\\
0 & \text{if }l(\barlambda)\text{ is odd}.
\end{cases}
\]
So the generic eigenvalues are $8,16,\dots,2^{n+2}$ and 0, and for
the card shuffle they are $2^{-k}$ for $k\in\{0,1,\dots,n-1\}$,
and 0.
\end{example}

\begin{rems*}
$ $
\begin{enumerate}[label=\arabic*.]
\item Note that, as for the type A case, the eigenvalues $\beta_{\lambda,\barlambda}^{D}$
may coincide for different choices of $\lambda$ and $\barlambda$.
For example, if $D=(\bar{1},n-1)$, then $\beta_{\lambda,\barlambda}^{D}=0$
whenever neither $\lambda$ nor $\barlambda$ has a part of size 1. 
\item Unlike the type A case, the present theorem does not claim that $m\circ\Delta_{D(\tau)}$
is diagonalisable on commutative or cocommutative Hopf algebras. Sage
computations show that this is in fact false.\textcolor{green}{{} }The
diagonalisability proof in \cite[Sec. 3.2]{descentoperatorchains}
for type A descent operators does not extend to hyperoctahedral descent
operators because the matrices involved there will gain negative entries
when extended to type B, so Perron-Frobenius will not apply.
\item The proof below can generalise to give the spectrum of convolution
products $\T_{1}*\dots*\T_{l}:=m\circ(\T_{1}\otimes\dots\otimes\T_{l})\circ\Delta^{[l]}$,
where the $\T_{i}$ are simultaneously diagonalisable graded Hopf
ambimorphisms. \textcolor{green}{Then the eigenvalues $\beta_{\lambda}$
would be a weighted count of set-compositions, where the weights are
the appropriate products of eigenvalues of the $\T_{i}$.}
\end{enumerate}
\end{rems*}

The two main ideas of the proof, as with \cites[Th. 3]{diagonalisingusinggrh}[Th. 3.5]{descentoperatorchains},
are:
\begin{itemize}
\item reduce to the cocommutative case by working in $(\gr\calh)^{*}$,
the dual of the associated graded Hopf algebra with respect to the
coradical filtration (see \cite[Sec. 1.3]{diagonalisingusinggrh}
for the definitions);
\item examine the action of $\gr(m \circ \Delta_{D(\tau)})^{*}$ on a Poincare-Birkhoff-Witt
basis, i.e. on products of primitive elements.
\end{itemize}
\textcolor{green}{The first part requires the following lemma:}
\begin{lem}
\label{lem:reduce-to-gr} Under the conditions of Theorem \ref{thm:evalues},

\begin{enumerate}
\item The dual associated graded map is 
\[
\left(\gr(m\circ\Delta_{D(\tau)}:\calhn\rightarrow\calhn)\right)^{*}=m\circ\Delta_{D((\gr\tau)^{*})}:(\gr\calh)_{n}^{*}\rightarrow(\gr\calh)_{n}^{*}.
\]
\item $(\gr\tau)^{*}$ is an involution.
\item The dimensions of the fixed subspaces are related by $\dim\calh_{n}^{\tau}=\dim(\gr\calh^{*})_{n}^{(\gr\tau)^{*}}$
and $\dim\calh_{n}^{-\tau}=\dim(\gr\calh^{*})_{n}^{-(\gr\tau)^{*}}$.
\item $(\gr\tau)^{*}$ is a Hopf ambimorphism.
\end{enumerate}
\end{lem}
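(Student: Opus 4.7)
The approach combines two standard principles: the associated graded with respect to the coradical filtration is a functor on filtration-preserving linear maps that intertwines $m$ and $\Delta$; and linear duality swaps $m$ with $\Delta$ and exchanges ``algebra'' with ``coalgebra''. Throughout I use that $\tau$ preserves the coradical filtration (the remark just before Section \ref{subsec:hdos}) and that the coradical filtration is automatically compatible with $m$ and $\Delta$ in any graded connected Hopf algebra.

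For part (i), decompose $\Delta_{D(\tau)} = \tau_D \circ \Delta_{D^+}$, where $\tau_D$ applies $\tau$ on the decorated tensorands and $\id$ on the rest. Functoriality of $\gr$ preserves $m$, $\Delta$, and $\tau_D$ separately, so
\[
\gr\bigl(m \circ \Delta_{D(\tau)}\bigr) = m_{\gr\calh} \circ (\gr\tau)_D \circ \Delta_{\gr\calh,\,D^+} = m_{\gr\calh} \circ \Delta_{\gr\calh,\,D(\gr\tau)}.
\]
Dualising via $(f\circ g)^* = g^* \circ f^*$, together with the duality identities $m_{\gr\calh}^* = \Delta_{(\gr\calh)^*}$ and $\Delta_{\gr\calh,D^+}^* = m_{(\gr\calh)^*,D^+}$ (each with the customary restriction/corestriction to the graded pieces of degrees $d_1,\dots,d_{l(D)}$) and the transparent $(\tau_D)^* = ((\gr\tau)^*)_D$, reassembles everything into $m \circ \Delta_{D((\gr\tau)^*)}$ on $(\gr\calh)_n^*$. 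The key bookkeeping is that the set of decorated positions, and therefore $D$ itself, is unchanged by duality.

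Parts (ii)--(iv) follow quickly from functoriality. For (ii), $\gr\tau \circ \gr\tau = \gr(\tau \circ \tau) = \gr(\id) = \id$, hence $((\gr\tau)^*)^2 = (\gr\tau \circ \gr\tau)^* = \id$. For (iii), the projectors $\frac{1 \pm \tau}{2}$ implicit in the preceding lemma commute with the coradical filtration since $\tau$ does, so $\calh^{\pm\tau}$ are filtered subspaces with $\gr(\calh^{\pm\tau}) = (\gr\calh)^{\pm\gr\tau}$; dimensions are then preserved by linear duality. For (iv), an algebra (anti)morphism $\tau$ induces an algebra (anti)morphism $\gr\tau$ by functoriality of $\gr$ on $m$, and dualising converts this into a coalgebra (anti)morphism on $(\gr\calh)^*$; the parallel argument on the coalgebra side completes the ambimorphism claim.

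The principal obstacle is part (i): identifying precisely how $\tau_D$ and $\Delta_{D^+}$ dualise without misaligning decorated positions or tensor factors, and checking that the associated graded of the refined coproduct $\Delta_{D^+}$ is indeed the analogous refined coproduct of $\gr\calh$. Once the three-step decomposition $m \circ \tau_D \circ \Delta_{D^+}$ is handled carefully, parts (ii)--(iv) are essentially formal consequences of functoriality.
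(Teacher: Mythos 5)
Your proposal is correct and follows essentially the same route as the paper: functoriality of the associated graded construction (using that $\tau$ preserves the coradical filtration because it is a coalgebra morphism or antimorphism) applied to the decomposition of $m\circ\Delta_{D(\tau)}$, followed by linear duality, which swaps product with coproduct and algebra with coalgebra (anti)morphisms while leaving the decorated positions of $D$ unchanged. The only divergence is in part (iii), where you argue via the filtration-compatible projectors $\frac{1\pm\tau}{2}$ and $\gr(\calh^{\pm\tau})=(\gr\calh)^{\pm\gr\tau}$, whereas the paper instead cites that a filtration-preserving map and its (dual) associated graded map have the same spectrum and then uses diagonalisability of involutions; both arguments are valid and of comparable length.
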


\begin{proof}
$ $

\begin{enumerate}
\item Since $\tau$ is a coalgebra morphism or antimorphism, $\tau$ preserves
the coradical filtration. Hence $\gr\tau:\gr\calh\rightarrow\gr\calh$
is well-defined. Then, taking the associated graded map is functorial
and so preserves convolution products. The same is true for dualising.
\item Taking the associated graded map is functorial and so preserves compositions.
Hence $\tau\circ\tau=\id$ means $(\gr\tau)\circ(\gr\tau)=\id$, and
dualising this shows that $(\gr\tau)^{*}$ is an involution.
\item \textcolor{green}{As noted in \cite{diagonalisingusinggrh},} $\tau$
and $\gr(\tau)$ have the same spectrum, and so does $(\gr\tau)^{*}$.
Since $\tau$ and $(\gr\tau)^{*}$ are involutions, they are diagonalisable,
and so their spectrum determines their eigenspace dimensions.
\item It follows from the definition of the Hopf structure on $\gr(\calh)$
that, if $\tau$ is an algebra morphism (resp. antimorphism), then
so is $\gr(\tau)$, and then $(\gr\tau)^{*}$ is a coalgebra morphism
(resp. antimorphism). Similarly, if $\tau$ is a coalgebra morphism
(resp. antimorphism), then $(\gr\tau)^{*}$ is an algebra morphism
(resp. antimorphism).
\end{enumerate}
\end{proof}
Any coradical-filtration preserving map and its dual associated graded
map have the same spectrum, and $(\gr\calh)^{*}$ is cocommutative
\cites[Th. 11.2.5.a]{sweedler}[Prop. 1.6]{grhiscommutative}. So the
Lemma above reduces the proof to the case when $\calh$ is cocommutative,
where its structure is well-understood. Indeed, \textcolor{blue}{by
the Cartier-Milnor-Moore theorem \cite[Th. 3.8.1]{cmm}, a graded
connected cocommutative Hopf algebra $\calh$ is the universal enveloping
algebra of its subspace of primitives. Consequently, $\calh$ has
a Poincare-Birkhoff-Witt (PBW) basis: if $(\calp,\preceq)$ is an
ordered basis of the primitive subspace of $\calh$, then $\{p_{1}\dots p_{k}|k\in\mathbb{N},p_{1}\preceq\dots\preceq p_{k}\in\calp\}$
is a basis of $\calh$. The basis element $p_{1}\dots p_{k}$ has
}\textcolor{blue}{\emph{length}}\textcolor{blue}{{} $k$. }The key to
the proof is the action of hyperoctahedral descent operators on this
PBW basis, as stated in the following Triangularity Lemma (a substitute
for the Symmetrisation Lemma of \cite[Lem. 3.8, 3.9]{descentoperatorchains}). 

\begin{notn}\label{notn:pbwbasis} Let $\calh$ be a graded connected
Hopf algebra that is cocommutative, and let $\tau:\calh\rightarrow\calh$
be a linear graded involution that is a Hopf-ambimorphism. Let $(\calp,\preceq)$
and $(\barcalp,\preceq)$ be ordered bases of $(Prim\calh)^{\tau}$
and $(Prim\calh)^{-\tau}$ respectively. Then $\calp\cup\barcalp$,
with the ``concatenation'' order of $p\preceq\barp$ for all $p\in\calp$,
$\barp\in\barcalp$, is an ordered basis of $Prim\calh$, and thus
can be used to construct a PBW basis of $\calh$. Let $p_{1},\dots,p_{k}\in\calp$,
$\barp_{1},\dots,\barp_{\bark}\in\barcalp$ with $p_{1}\preceq\dots\preceq p_{k}$,
$\barp_{1}\preceq\dots\preceq\barp_{\bark}$, and let $\lambda=(\deg p_{1},\dots,\deg p_{k})$, $\barlambda=(\deg\barp_{1},\dots,\deg\barp_{\bark})$.\end{notn}
\begin{lem}[Triangularity Lemma]
\label{lem:trilemma}  Under the setup of Notation \ref{notn:pbwbasis}:
\[
m\circ\Delta_{D(\tau)}(p_{1}\dots p_{k}\barp_{1}\dots\barp_{\bark})=\beta_{\lambda,\barlambda}^{D}p_{1}\dots p_{k}\barp_{1}\dots\barp_{\bark}+\mbox{PBW-basis elements of length less than }k+\bark.
\]
Consequently, relative to this PBW basis, the matrix for $m\circ\Delta_{D(\tau)}$
is triangular with $\beta_{\lambda,\barlambda}^{D}$ as its diagonal
entries, and hence $\beta_{\lambda,\barlambda}^{D}$ are the eigenvalues.
\end{lem}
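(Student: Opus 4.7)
The plan is to compute $m \circ \Delta_{D(\tau)}(p_{1}\dots p_{k}\barp_{1}\dots\barp_{\bark})$ directly, extract the coefficient of $p_{1}\dots p_{k}\barp_{1}\dots\barp_{\bark}$ in the result, and verify that it equals $\beta_{\lambda,\barlambda}^{D}$ while all other terms are PBW elements of strictly shorter length. The triangularity statement and the identification of the $\beta_{\lambda,\barlambda}^{D}$ as eigenvalues then follow by ordering the PBW basis by length.

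First I would expand $\Delta^{[l(D)]}$ on the product of primitives. Iterating $\Delta(p)=1\otimes p+p\otimes 1$ together with the compatibility of $m$ and $\Delta$ yields
\[
\Delta^{[l(D)]}(p_{1}\dots p_{k}\barp_{1}\dots\barp_{\bark}) \;=\; \sum_{(S_{1},\dots,S_{l(D)})} x_{S_{1}}\otimes\dots\otimes x_{S_{l(D)}},
\]
summed over all ordered set-compositions of $\{1,\dots,k\}\cup\{\bar{1},\dots,\bar{\bark}\}$, where $x_{S_{j}}$ denotes the product of the primitives indexed by $S_{j}$ taken in their original left-to-right order. Projecting onto $\Delta_{D^{+}}$ keeps exactly those set-compositions satisfying $\sum_{i\in S_{j}}\deg p_{i}+\sum_{\bar{i}\in S_{j}}\deg\barp_{i}=|d_{j}|$ for every $j$, i.e.\ those compatible with $\lambda,\barlambda,D^{+}$.

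Next I would apply $\tau$ (or $\tautilde$) to each tensor factor corresponding to a decorated part of $D$. Since $\tau$ is a graded algebra (anti)morphism that fixes each element of $\calp$ pointwise and negates each element of $\barcalp$ pointwise, its action on a decorated $x_{S_{j}}$ produces $(-1)^{|\{\bar{i}\in S_{j}\}|}$ times either $x_{S_{j}}$ itself (when $\tau$ is an algebra morphism) or its reversed product (when $\tautilde$ is an algebra antimorphism). Multiplying these signs over all decorated blocks recovers precisely the sign assigned to $(S_{1},\dots,S_{l(D)})$ in the definition of $\beta_{\lambda,\barlambda}^{D}$.

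Finally, applying $m$ multiplies all the tensor factors back together, producing a word in the $p_{i}$ and $\barp_{j}$ obtained by reading the blocks in order (with possible internal reversals). Using $xy=yx+[x,y]$ for primitive $x,y$, with $[x,y]$ again primitive of degree $\deg x+\deg y$, any such rearrangement can be reduced to $p_{1}\dots p_{k}\barp_{1}\dots\barp_{\bark}$ plus PBW elements of length strictly less than $k+\bark$, since each commutator step fuses two primitive factors into one. Summing the contributions of all compatible set-compositions with their signs produces the claimed coefficient $\beta_{\lambda,\barlambda}^{D}$. The main obstacle is the uniform treatment of the four Hopf-ambimorphism cases: in particular, verifying that reversing the order of primitives within decorated blocks (as arises when $\tautilde$ is an antimorphism) contributes only to lower-length PBW elements, via the same commutator argument, so that all four cases yield the same leading coefficient. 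Beyond this bookkeeping, no ingredient other than primitivity of $[x,y]$ and the PBW theorem is required.
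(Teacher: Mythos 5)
Your proposal is correct and follows essentially the same route as the paper: expand $\Delta_{D^{+}}$ on the product of primitives via coassociativity/primitivity to get a sum over set-compositions compatible with $\lambda,\barlambda,D^{+}$, apply $\tau$ on decorated tensorands to pick up the sign given by the parity of $\barcalp$-factors, then multiply back and invoke PBW straightening (your explicit $xy=yx+[x,y]$ argument is exactly the cited straightening lemma) to see that every summand contributes the leading term plus strictly shorter PBW terms, including in the antimorphism case where decorated blocks are reversed. No gap to report.
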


\begin{proof}
By coassociativity,
\[
\Delta_{D^{+}}(p_{1}\dots p_{k}\barp_{1}\dots\barp_{\bark})=\sum_{B}\left(\prod_{i\in B_{1}}p_{i}\prod_{\bari\in B_{1}}\barp_{\bari}\right)\otimes\dots\otimes\left(\prod_{i\in B_{l(D)}}p_{i}\prod_{\bari\in B_{l(D)}}\barp_{\bari}\right),
\]
 where the sum is over all set-compositions $B$ compatible with $\lambda,\barlambda,D$.
Hence
\begin{equation}
m\circ\Delta_{D(\tau)}(p_{1}\dots p_{k}\barp_{1}\dots\barp_{\bark})=\sum_{B_{1},\dots,B_{l(D)}}\pm\left(\prod_{i\in B_{1}}p_{i}\prod_{\bari\in B_{1}}\barp_{\bari}\right)\dots\left(\prod_{i\in B_{l(D)}}p_{i}\prod_{\bari\in B_{l(D)}}\barp_{\bari}\right),\label{eq:descent-operator-on-product-of-primitives}
\end{equation}
where the sign for each summand is the parity of elements of $\barcalp$
in decorated parts. (If $\tau$ is an algebra antimorphism, then for
each decorated part $d_{i}$, the products in the $i$th bracket above
are taken in the reversed order.) Each summand is a product of $p_{1},\dots,p_{k},\barp_{1}\dots,\barp_{\bark}$
in some order, so the PBW straightening algorithm \cite[Lem. III.3.9]{pbwref}
rewrites each summand (excluding the sign) as $p_{1}\dots p_{k}\barp_{1}\dots\barp_{\bark}+$
terms of length less than $k+\bark$. Thus the coefficient of the
highest length term $p_{1}\dots p_{k}\barp_{1}\dots\barp_{\bark}$
in $m\circ\Delta_{D(\tau)}(p_{1}\dots p_{k}\barp_{1}\dots\barp_{\bark})$
is the signed number of summands, i.e. the signed number of set-compositions
compatible with $\lambda,\barlambda,D$.
\end{proof}
\textcolor{green}{To continue the proof of Theorem \ref{thm:evalues}:}
according to Lemma \ref{lem:trilemma}, the matrices of $m\circ\Delta_{D(\tau)}$
are simultaneously triangularisable with $\beta_{\lambda,\barlambda}^{D}$
on the diagonal, hence the eigenvalues of sums of $m\circ\Delta_{D(\tau)}$
are sums of these diagonal entries. The multiplicity of $\beta_{\lambda,\barlambda}^{D}$
is the number of multiset pairs $\{p_{1},\dots,p_{k}\}\subseteq\calp$,
$\{\barp_{1}\dots,\barp_{\bark}\}\subseteq\barcalp$ whose degrees
are given by $\lambda,\barlambda$. If $b_{i}=\dim(Prim\calh)_{i}^{\tau}=|\calp\cap\calh_{i}|$
and $\bar{b}_{i}=\dim(Prim\calh)_{i}^{-\tau}=|\barcalp\cap\calh_{i}|$,
then the multiplicities of $\beta_{\lambda,\barlambda}^{D}$ are
given by the generating function in Theorem \ref{thm:evalues}.ii.
So it remains to show that the sequences $b_{i}$ and $\bar{b}_{i}$
are determined by the identities (\ref{eq:multiplicity1}) and (\ref{eq:multiplicity2}).

To see (\ref{eq:multiplicity1}) (same argument as in the type A case):
the PBW basis elements of degree $n$ are precisely the products over
a multiset in $\calp$ and a multiset in $\barcalp$, whose degrees
total $n$.

To see (\ref{eq:multiplicity2}): note that the PBW basis element
$p_{1}\dots p_{k}\barp_{1}\dots\barp_{\bark}$ is $\tau$-invariant
if $\bark$ is even, and $\tau$-negating if $\bark$ is odd. \textcolor{green}{Make
a signed enumeration of such products, where each $\barp\in\barcalp$
is signed, and all $p\in\calp$ are unsigned, so that $p_{1}\dots p_{k}\barp_{1}\dots\barp_{\bark}$
is $\tau$-negating if it is signed, and $\tau$-invariant if it is
unsigned. Hence the coefficients are $\dim\calhn^{\tau}-\dim\calhn^{-\tau}$.}

\section{The Hyperoctahedral-Riffle-Shuffle Operators\label{sec:oriffle}}

This section focuses on the following four families of operators,
whose rescaling (division by $a^{n}$) corresponds to $a$-handed
hyperocatahedral riffle-shuffles of decks of $n$ cards, as studied
in \cite{bergeronorthogidem,fulmanbrauercomplex,fulmancyclestructure}.
For a linear graded involution $\tau$ that is a Hopf ambimorphism,
define 
\begin{align}
\oriftauplus & :=\begin{cases}
m\circ(\id\otimes\tau\otimes\id\otimes\tau\otimes\dots\otimes\id)\circ\Delta^{[a]} & \mbox{if }a\mbox{ is odd};\\
m\circ(\id\otimes\tau\otimes\id\otimes\tau\otimes\dots\otimes\tau)\circ\Delta^{[a]} & \mbox{if }a\mbox{ is even};
\end{cases}\nonumber \\
\oriftauminus & :=\begin{cases}
m\circ(\tau\otimes\id\otimes\tau\otimes\id\otimes\dots\otimes\tau)\circ\Delta^{[a]} & \mbox{if }a\mbox{ is odd};\\
m\circ(\tau\otimes\id\otimes\tau\otimes\id\otimes\dots\otimes\id)\circ\Delta^{[a]} & \mbox{if }a\mbox{ is even}.
\end{cases}\label{eq:oriffle-def}
\end{align}
In Section \ref{subsec:oriffle-evector} we will write $\tautilde$
in place of $\tau$ for an algebra antimorphism, \textcolor{green}{to
more clearly distinguish it from the case of an algebra morphism.}

\subsection{Compositions and Properties}

We apply Proposition \ref{prop:compositionlaw} to give a composition
rule for the above operators; a special case appeared in \cite{typebshufflecasino}.
\begin{prop}
\label{prop:riffle-composition} Let $\calh$ be a graded connected
Hopf algebra, and $\tau:\calh\rightarrow\calh$ be a linear graded
involution that is a Hopf ambimorphism. Under any one of these three
conditions:
\begin{enumerate}
\item $\calh$ is commutative and $a$ is odd,
\item $\calh$ is cocommutative and $b$ is odd, 
\item $\calh$ is commutative or cocommutative, and $\tau$ is not a Hopf
morphism,
\end{enumerate}
the operators in (\ref{eq:oriffle-def}) compose as follows: 
\begin{align*}
\oriftauplus\circ\orif\tau_{b}^{+} & =\oriftauminus\circ\orif\tau_{b}^{-}=\orif\tau_{ab}^{+};\\
\oriftauplus\circ\orif\tau_{b}^{-} & =\oriftauminus\circ\orif\tau_{b}^{+}=\orif\tau_{ab}^{-}.
\end{align*}

\end{prop}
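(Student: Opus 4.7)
The natural plan is to expand each riffle-shuffle operator as a sum of elementary hyperoctahedral descent operators and then invoke Proposition \ref{prop:compositionlaw}. Writing
\[
\orif\tau_a^{\pm}=\sum_{D}m\circ\Delta_{D(\tau)},
\]
where $D$ ranges over the signed (or tilde-signed, when $\tau=\tautilde$ is not a Hopf morphism) weak-compositions of $n$ with $a$ parts whose signs alternate in the pattern prescribed by (\ref{eq:oriffle-def}) --- beginning with $+$ for the $\orif\tau_a^+$ expansion and with $-$ for $\orif\tau_a^-$, and ending with the sign dictated by the parity of $a$ --- reduces the claim to a computation at the level of elementary operators.

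Composing, say, $\oriftauplus\circ\orif\tau_b^+$ and applying the appropriate clause of Proposition \ref{prop:compositionlaw} produces a triple sum indexed by pairs $(D,D')$ together with a matrix $M\in\Mat(D,D')$ in the cocommutative case (and $M\in\Mat(D',D)$ in the commutative case), with $\wcomp(M)$ replaced by $\widetilde{\wcomp_D}(M)$ or $\widetilde{\wcomp_{D'}}(M)$ in the tilde setting. Since the sign of $M_{i,j}$ is the product $s_i s'_j$ of the prescribed alternating signs of $D$ and $D'$, a triple $(D,D',M)$ is equivalent to the data of a single $a\times b$ matrix of non-negative integers summing to $n$, and each such matrix produces a (tilde-)signed weak-composition of length $ab$ whose sign pattern is determined entry-by-entry. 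What remains is to show that $M\mapsto\wcomp(M)$ (or $\widetilde{\wcomp}(M)$) is a bijection onto the set of signed weak-compositions indexing the expansion of $\orif\tau_{ab}^{\pm}$ on the right-hand side.

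Within a single row of $M$ the signs already alternate automatically, so the substantive check is alternation across row boundaries. In the Hopf-morphism cases reading $\wcomp(M)$ row by row, the boundary between rows $i$ and $i+1$ alternates iff $s_1 s_a=+1$ (commutative case, forcing $a$ odd) or $s_1' s_b'=+1$ (cocommutative case, forcing $b$ odd), and this is precisely what the parity hypotheses (i) and (ii) provide. In the tilde case the reading direction of row $i+1$ is reversed whenever $d_{i+1}$ is decorated, contributing an extra sign flip at every boundary and making the alternation hold unconditionally, which is what makes hypothesis (iii) sufficient. Once alternation is established, the starting and ending signs of the resulting length-$ab$ composition can be read off directly in each of the four $\pm$ combinations to identify the output as $\orif\tau_{ab}^{+}$ or $\orif\tau_{ab}^{-}$. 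The main obstacle throughout is the careful bookkeeping across the three hypotheses and four sign combinations; modulo that bookkeeping, the argument is a direct unpacking of Proposition \ref{prop:compositionlaw}.
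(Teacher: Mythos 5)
Your proposal is correct and follows essentially the same route as the paper's own proof: expand each $\orif\tau_a^{\pm}$ as a sum of elementary operators $m\circ\Delta_{D(\tau)}$ with alternating sign pattern, apply Proposition \ref{prop:compositionlaw}, and check that within-row alternation is automatic while row-boundary alternation is supplied by the parity hypotheses (or by the reversed reading direction of decorated rows in the $\tautilde$ case), so that the resulting (tilde-)signed weak-compositions run exactly over those indexing $\orif\tau_{ab}^{\pm}$. The bookkeeping details you defer are precisely the ones the paper also treats briefly.
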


\begin{proof}
View $\oriftauplus$ (resp. $\oriftauminus$) as $\sum m\circ\Delta_{D(\tau)}$
over all $D$ with $a$ parts, where \textcolor{green}{even parts}
(resp. \textcolor{green}{odd parts}) are negative. Then, on a commutative
algebra $\calh$, where $\tau$ is a Hopf morphism, Proposition \ref{prop:compositionlaw}.i
gives $\oriftauplus\circ\orif\tau_{b}^{+}=\oriftauminus\circ\orif\tau_{b}^{-}=\sum_{M}m\circ\Delta_{\wcomp(M)}$,
over all $b\times a$ matrices $M$ whose entries have the alternating
sign pattern 
\[
\begin{array}{cccc}
+ & - & + & \dots\\
- & + & -\\
+ & - & \ddots\\
\vdots
\end{array}.
\]
If $a$ is odd, then the first
row of this matrix ends with a positive entry, which is followed by
a negative entry at the start of the second row when computing $\wcomp(M)$.
Similar considerations for other rows shows that $\wcomp(M)$ exactly
runs through all signed weak-compositions with $ba$ parts whose \textcolor{green}{even
parts} are negative. If instead $\tau$ is a coalgebra antimorphism,
then the \textcolor{green}{even rows} must be read right to left when
computing $\widetilde{\wcomp_{D}}(M)$. Thus, if $a$ is odd (resp.
even), then the positive (resp. negative) entry at the end of the
first row is followed by a negative (resp. positive) entry at the
end of the second, so $\widetilde{\wcomp_{D}}(M)$ also runs through
all signed weak-compositions with $ba$ parts whose \textcolor{green}{even
parts} are negative. 

The other cases are similar.
\end{proof}
Next, we specialise Theorem \ref{thm:evalues} to obtain the spectrum
of the hyperoctahedral-riffle-shuffle operators:
\begin{prop}
\label{prop:oriffle-evalues} Let $\calh$ be a graded connected Hopf
algebra, and $\tau:\calh\rightarrow\calh$ be a linear graded involution
that is a Hopf ambimorphism. Consider the operators in  (\ref{eq:oriffle-def})
acting on $\calh_{n}$. Write $[g]f$ to mean the cofficient of the
monomial $g$ in the power series $f$. Then: 
\begin{itemize}
\item for even $a$, the eigenvalues of $\oriftauplus$ and $\oriftauminus$
are $a^{l}$, with multiplicity $[x^{l}y^{n}]\prod_{i}(1-xy^{i})^{-b_{i}}$;
and 0;
\item for odd $a$, the eigenvalues of $\oriftauplus$ are $a^{l}$, with
multiplicity $[x^{l}y^{n}]\prod_{i}(1-xy^{i})^{-b_{i}}\prod_{i}(1-y^{i})^{-\bar{b}_{i}}$;
\item for odd $a$, the eigenvalues of $\oriftauminus$ are:
\begin{itemize}
\item $a^{l}$, with multiplicity $[x^{l}y^{n}]\prod_{i}(1-xy^{i})^{-b_{i}}\frac{1}{2}\left(\prod_{i}(1+y^{i})^{-\bar{b}_{i}}+\prod_{i}(1-y^{i})^{-\bar{b}_{i}}\right)$;
\item $-a^{l}$, with multiplicity $[x^{l}y^{n}]\prod_{i}(1-xy^{i})^{-b_{i}}\frac{1}{2}\left(\prod_{i}(1+y^{i})^{-\bar{b}_{i}}-\prod_{i}(1-y^{i})^{-\bar{b}_{i}}\right)$.
\end{itemize}
\end{itemize}
\end{prop}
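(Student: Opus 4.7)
The plan is to expand each hyperoctahedral-riffle-shuffle operator as a sum of elementary descent operators $m\circ\Delta_{D(\tau)}$ and then apply Theorem \ref{thm:evalues}. By definition (\ref{eq:oriffle-def}), $\oriftauplus=\sum_{D}m\circ\Delta_{D(\tau)}$, where $D$ ranges over signed weak-compositions of $n$ with $a$ parts whose decorated positions are precisely the even indices; the analogous sum for $\oriftauminus$ uses the odd indices. So by Theorem \ref{thm:evalues}.iii, the eigenvalue indexed by a double-partition $(\lambda,\barlambda)$ is $\sum_{D}\beta_{\lambda,\barlambda}^{D}$.

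To compute this sum I would interchange orders of summation. Rather than first fixing $D$ and then counting compatible signed set-compositions, I would sum directly over all set-compositions $B_{1}|\dots|B_{a}$ of $\{1,\dots,l(\lambda)\}\cup\{\bar{1},\dots,\overline{l(\barlambda)}\}$ into $a$ blocks, each weighted by $(-1)$ to the number of signed elements lying in the decorated blocks; the weak-composition $D$ is then recovered from the block sizes. Since each element independently chooses its block, the signed sum factorises: every one of the $l(\lambda)$ unsigned elements contributes $a$ (any block, weight $+1$), and every one of the $l(\barlambda)$ signed elements contributes $(a-e)-e=a-2e$, where $e$ is the number of decorated blocks. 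For $\oriftauplus$ one has $e=\lfloor a/2\rfloor$, giving $a-2e=1$ when $a$ is odd and $0$ when $a$ is even; for $\oriftauminus$ one has $e=\lceil a/2\rceil$, giving $a-2e=-1$ when $a$ is odd and $0$ when $a$ is even. This produces the three stated eigenvalue patterns $a^{l(\lambda)}$, $a^{l(\lambda)}(-1)^{l(\barlambda)}$, and (for even $a$) $a^{l(\lambda)}\cdot 0^{l(\barlambda)}$, which equals $a^{l(\lambda)}$ when $\barlambda=\emptyset$ and $0$ otherwise.

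For the multiplicities I would apply Theorem \ref{thm:evalues}.ii and collect all $(\lambda,\barlambda)$ that yield a prescribed eigenvalue. The generating function $\prod_{i}(1-xy^{i})^{-b_{i}}$ already enumerates multisets from $\calp$ by size ($x$) and total degree ($y$), so $[x^{l}y^{n}]$ picks out the $\lambda$-contribution. For even $a$, only $\barlambda=\emptyset$ contributes to $a^{l}$, so no $\barcalp$-factor is needed. For odd $a$ with $\oriftauplus$, every $\barlambda$ contributes and one multiplies by $\prod_{i}(1-y^{i})^{-\bar{b}_{i}}$. For odd $a$ with $\oriftauminus$, one must separate $\barcalp$-multisets by the parity of their size; this is accomplished by the standard device of evaluating $G(z,y):=\prod_{i}(1-zy^{i})^{-\bar{b}_{i}}$ at $z=\pm 1$, so that the even-size part is $\tfrac{1}{2}\bigl(G(1,y)+G(-1,y)\bigr)=\tfrac{1}{2}\bigl(\prod_{i}(1-y^{i})^{-\bar{b}_{i}}+\prod_{i}(1+y^{i})^{-\bar{b}_{i}}\bigr)$ and the odd-size part is $\tfrac{1}{2}\bigl(G(1,y)-G(-1,y)\bigr)$. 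Multiplying by the $\calp$-factor and extracting $[x^{l}y^{n}]$ yields the two claimed expressions.

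The main obstacle is purely bookkeeping: correctly identifying the decoration pattern of each $\oriftauplus,\oriftauminus$ and the resulting value of $e$ (and hence the factor $a-2e$ contributed by each signed element), and then cleanly assembling the parity-split generating functions in the final odd-$a$ case of $\oriftauminus$. Once the independence of block choices is exploited and the eigenvalues take the claimed product form, the rest is a routine manipulation of power-series identities.
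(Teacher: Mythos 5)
Your argument is correct and follows the same overall skeleton as the paper's proof: expand $\oriftauplus,\oriftauminus$ as $\sum_{D}m\circ\Delta_{D(\tau)}$ over the $a$-part signed weak-compositions with the fixed decoration pattern, apply Theorem \ref{thm:evalues}.iii to get the eigenvalue $\sum_{D}\beta_{\lambda,\barlambda}^{D}$, and then obtain multiplicities from Theorem \ref{thm:evalues}.ii via the substitution $x_{i}\mapsto xy^{i}$ together with a $\pm1$ evaluation in the $\bar{x}_{i}$ to split by the parity of $l(\barlambda)$ (this parity trick is exactly the paper's ``average and difference of the signed and unsigned substitutions''). Where you genuinely diverge is the eigenvalue computation itself: the paper generalises Example \ref{ex:evalue-oriftau2minus} and runs a sign-reversing involution on set-compositions (moving the smallest $\bar{j}$ not in the last block), whereas you interchange the sums and exploit that each element of $\{1,\dots,l(\lambda)\}\cup\{\bar{1},\dots,\overline{l(\barlambda)}\}$ chooses its block independently, so the signed count factorises as $a^{l(\lambda)}(a-2e)^{l(\barlambda)}$ with $e$ the number of decorated positions. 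This is a clean and arguably more transparent route; it yields the three cases $1,-1,0$ for $a-2e$ at once, while the paper's involution has the advantage of being the same combinatorial device reused elsewhere (e.g.\ in the proofs of Theorem \ref{thm:stationary} and Theorem \ref{thm:rotriffleeven-evector}). One further point in your favour: your odd-size generating function $\tfrac{1}{2}\bigl(\prod_{i}(1-y^{i})^{-\bar{b}_{i}}-\prod_{i}(1+y^{i})^{-\bar{b}_{i}}\bigr)$ is the correct (coefficientwise non-negative) multiplicity of $-a^{l}$; the displayed formula in the Proposition has the two products in the opposite order, which would give negative coefficients, so your derivation matches the intent of the paper's proof and indicates the statement's last line has the difference written backwards.
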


\begin{proof}
We generalise the ideas in Example \ref{ex:evalue-oriftau2minus}.
Let $\beta_{\lambda,\barlambda}^{a,+}$ (resp. $\beta_{\lambda,\barlambda}^{a,-}$)
denote the eigenvalue of $\oriftauplus$ (resp. $\oriftauminus$)
corresponding to the double-partition $\lambda,\barlambda$; this
is the signed count of set-compositions $B$ of $\{1,2,\dots,l(\lambda)\}\cup\{\bar{1},\bar{2},\dots,l(\barlambda)\}$
into $a$ blocks, where the sign is the parity of signed integers
in the \textcolor{green}{even (resp. odd)} blocks. First suppose $a$
is even. If $\barlambda\neq\emptyset$, then there is a sign-reversing
involution on these set-compositions: if $\bar{1}\in B_{2i-1}$ for
some $i$, move it to $B_{2i}$, and if $\bar{1}\in B_{2i}$, move
it to $B_{2i-1}$. Hence $\beta_{\lambda,\barlambda}^{a,+}=\beta_{\lambda,\barlambda}^{a,-}=0$
when $\barlambda\neq\emptyset$. \textcolor{green}{If} $\bar{\lambda}=\emptyset$,
then all relevant set-compositions have positive sign, so $\beta_{\lambda,\barlambda}^{a,+}=\beta_{\lambda,\barlambda}^{a,-}=a^{l(\lambda)}$. 

When $a$ is odd, the above signed involution is still defined when
$\barlambda\neq\emptyset$ and $\bar{1}\not\in B_{a}$. So, modify
the signed involution to move the smallest $\bar{j}$ such that $\bar{j}\not\in B_{a}$.
This will be defined when $\barlambda\neq\emptyset$ and $\bar{1},\bar{2},\dots,l(\barlambda)$
are not all in $B_{a}$. Thus the only contributions to $\beta_{\lambda,\barlambda}^{a,+}$
or $\beta_{\lambda,\barlambda}^{a,-}$ come from set-compositions
where $\bar{1},\bar{2},\dots,l(\barlambda)$ are all in $B_{a}$
- these are equivalent to the $a^{l(\lambda)}$ set-compositions of
$\{1,2,\dots,l(\lambda)\}$ into $a$ blocks, and the sign is $(-1)^{l(\barlambda)}$
for $\beta_{\lambda,\barlambda}^{a,-}$ , and $+1$ always for
$\beta_{\lambda,\barlambda}^{a,+}$. 

To see the multiplicities: make the substitution $x_{i}\mapsto xy^{i}$,
$\bar{x}_{i}\mapsto y^{i}$ in Theorem \ref{thm:evalues}.ii, so $x$
tracks the number of positive parts and $y$ tracks the total degree.
Then \textcolor{green}{$[x^{l}y^{n}]f(x,y)=\sum[x_{\lambda,\barlambda}]f(x_{1},x_{2},\dots,\bar{x}_{1},\bar{x}_{2},\dots)$}
summing over all double-partitions $\lambda,\barlambda$ of $n$
with $l$ parts - this handles the case of $\oriftauplus$ for odd
$a$. For $\oriftauminus$, the substitution $x_{i}\mapsto xy^{i}$,
$\bar{x}_{i}\mapsto-y^{i}$ into Theorem \ref{thm:evalues}.ii would
\textcolor{green}{introduce a sign when $\barlambda$ has an odd
number of parts:} \textcolor{green}{$[x^{l}y^{n}]f(x,y)=\sum(-1)^{l(\barlambda)}[x_{\lambda,\barlambda}]f(x_{1},x_{2},\dots,\bar{x}_{1},\bar{x}_{2},\dots)$.}
So, to sum only the coefficients of $x_{\lambda,\barlambda}$ when
$\barlambda$ has an even number of parts, \textcolor{green}{we can
average the signed and unsigned substitutions, and similarly take
their difference to isolate the coefficients }of $x_{\lambda,\barlambda}$
for $\barlambda$ with an odd number of parts.
\end{proof}

\subsection{Eigenvectors\label{subsec:oriffle-evector}}

We give a basis of eigenvectors for the hyperoctahedral-riffle-shuffle
operators. These will aid in computing the expectations of certain
statistics \textcolor{green}{under card-shuffling. }The eigenvector
formulas depend on whether the involution is an algebra morphism (Theorem
\ref{thm:rotriffleodd-evector}, written $\tau$) or antimorphism
(Theorem \ref{thm:flipriffle-evector}, written $\tautilde$). The
antimorphism case is easier, so we begin there. Let $\sk$ denote
the symmetric group on $k$ objects.

\begin{thm}
\label{thm:flipriffle-evector} Let $\calh$ be a graded connected
Hopf algebra, and $\tautilde:\calh\rightarrow\calh$ a linear graded
involution that is an algebra antimorphism and a coalgebra morphism
or antimorphism. 

\begin{enumerate}
\item Take $p_{1},\dots,p_{k}\in(Prim\calh)^{\tautilde}$ and $\barp_{1},\dots,\barp_{\bark}\in(Prim\calh)^{-\tautilde}$(where
$k$ or $\bark$ may be zero).

\begin{enumerate}
\item If $a$ is even, then 
\begin{align*}
\left(\sum_{\sigma\in\sk}p_{\sigma(1)}\dots p_{\sigma(k)}\right)\barp_{1}\dots\barp_{\bark} & \mbox{ is an eigenvector of }\oriftautildeplus\mbox{, with eigenvalue }\begin{cases}
a^{k} & \mbox{if }\bark=0;\\
0 & \mbox{if }\bark>0;
\end{cases}\\
\barp_{1}\dots\barp_{\bark}\left(\sum_{\sigma\in\sk}p_{\sigma(1)}\dots p_{\sigma(k)}\right) & \mbox{is an eigenvector of }\oriftautildeminus\mbox{, with eigenvalue }\begin{cases}
a^{k} & \mbox{if }\bark=0;\\
0 & \mbox{if }\bark>0.
\end{cases}
\end{align*}
\item If $a$ is odd, then 
\[
\barp_{1}\dots\barp_{\bark}\left(\sum_{\sigma\in\sk}p_{\sigma(1)}\dots p_{\sigma(k)}\right)\mbox{ and }\left(\sum_{\sigma\in\sk}p_{\sigma(1)}\dots p_{\sigma(k)}\right)\barp_{1}\dots\barp_{\bark}
\]
are eigenvectors of $\oriftautildeplus$, with eigenvalue $a^{k}$.
\item If $a$ is odd, then
\[
\barp_{1}\dots\barp_{\bark}\left(\sum_{\sigma\in\sk}p_{\sigma(1)}\dots p_{\sigma(k)}\right)+\left(\sum_{\sigma\in\sk}p_{\sigma(1)}\dots p_{\sigma(k)}\right)\barp_{1}\dots\barp_{\bark}
\]
is an eigenvector of $\oriftautildeminus$, for odd $a$, with eigenvalue
$(-1)^{\bark}a^{k}$.
\end{enumerate}
\item If $\calp,\barcalp$ are ordered bases of $(Prim\calh)^{\tautilde},(Prim\calh)^{-\tautilde}$
respectively, then\textcolor{green}{{} the vectors of any fixed format
in i above}, over all choices of $p_{1}\preceq\dots\preceq p_{k}\in\calp$
and $\barp_{1}\preceq\dots\preceq\barp_{\bark}\in\barcalp$ (\textcolor{green}{allowing
$k$ and $\bark$ to vary}), are linearly independent.
\item Furthermore, if $\calh$ is cocommutative, then the sets described
in ii above are bases of eigenvectors for the appropriate $\orif\tautilde$
operator.
\end{enumerate}
\end{thm}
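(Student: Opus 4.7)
The plan is to verify the eigenvector claim (i) by direct computation, establish linear independence (ii) via triangularity against a PBW basis, and conclude (iii) by a dimension count. I would start from the identity
\[
\Delta^{[a]}(q_1 \cdots q_n) = \sum_{f:[n]\to[a]} q_{f^{-1}(1)} \otimes \cdots \otimes q_{f^{-1}(a)},
\]
valid whenever the $q_i$ are primitive, with $q_S$ denoting the product of those $q_i$ with $i\in S$ in the order inherited from $[n]$. After applying the alternating $(\id\otimes\tautilde\otimes\cdots)$ pattern and then $m$, the antimorphism property $\tautilde(xy)=\tautilde(y)\tautilde(x)$ rewrites each $\tautilde$-applied factor as its reverse-ordered product of $\tautilde(q_i)$'s, multiplied by $(-1)^{\#\{\barp\in B_j\}}$. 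On the proposed eigenvector $v = S_k(p_1,\dots,p_k)\barp_1\cdots\barp_\bark$ (writing $S_k$ for the $\sk$-symmetrisation of the $p_i$), the sum over $\sigma\in\sk$ absorbs the order-reversal on the $p$'s within each tensor factor, because reversing the $p$-positions in a single $B_j$ corresponds to a bijective relabelling of $\sigma\in\sk$ in the outer sum.

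For part (i.a) with even $a$ and $\bark>0$, the tensor factors pair up as $(B_{2i-1},B_{2i})$ with opposite $\tautilde$-treatments, and I would define a sign-reversing involution by swapping the membership of $\barp_\bark$ between the two factors of the pair containing it. Since $\barp_\bark$ is the final letter of $v$, it sits at the end of whichever $B_j$ contains it, so the antimorphism property lets $-\barp_\bark$ ``peel off'' at the boundary of the adjacent $\tautilde$-factor; the swap therefore preserves the underlying product and flips only the sign, giving $\oriftautildeplus(v)=0$. The $\oriftautildeminus$ case on $\barp_1\cdots\barp_\bark S_k$ is symmetric, moving $\barp_1$ between the first pair. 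For the nonzero eigenvalue claims in (i.b), (i.c), and the $\bark=0$ subcase of (i.a), no such pairing exists (either $a$ is odd, or there are no $\barp$'s), so the surviving terms are those where the entire $\barp$-block stays inside the extreme tensor factor matching its position in $v$; summing over the assignments of the $p$'s into the remaining factors then yields the Adams-operation count $a^k$, with an overall sign $(-1)^\bark$ when that extreme factor carries a $\tautilde$.

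For part (ii), I would use PBW triangularity in the concatenation order of Notation \ref{notn:pbwbasis}: after straightening, $S_k(p_1,\dots,p_k) = k!\,p_1\cdots p_k + (\text{strictly shorter PBW monomials from commutator terms})$, while $\barp_1\cdots\barp_\bark$ is already in PBW order. Hence each proposed eigenvector has leading PBW term a nonzero scalar multiple of $p_1\cdots p_k\barp_1\cdots\barp_\bark$, and varying the tuples $(p_i,\barp_j)$ produces an upper-triangular transition matrix against a subset of the PBW basis, proving linear independence. For part (iii), the Cartier-Milnor-Moore theorem invoked before Notation \ref{notn:pbwbasis} identifies the PBW monomials as a basis of $\calhn$ in the cocommutative case, so the count of proposed eigenvectors per degree matches $\dim\calhn$; together with (ii) this forces spanning, and the eigenvalue multiplicities automatically match Proposition \ref{prop:oriffle-evalues}.

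The main obstacle I expect is the sign-reversing involution in (i.a): verifying rigorously that moving the extreme $\barp$ between paired factors preserves the underlying element of $\calh$ up to a single clean sign flip. This hinges on that $\barp$ sitting at the extreme position of $v$, hence of whichever tensor factor it occupies, so that the antimorphism property commutes the $\tautilde$-induced sign past the factor-boundary without reshuffling any other letters.
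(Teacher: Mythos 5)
Your even-$a$ vanishing argument (move the extreme negating primitive within the block-pair containing it) is sound, and your parts (ii) and (iii) coincide with the paper's PBW-triangularity and Cartier--Milnor--Moore arguments. The genuine gap is in the odd-$a$ cases, above all (i.c). For $\oriftautildeminus$ with $a$ odd acting on $\barp_{1}\dots\barp_{\bark}\left(\sum_{\sigma\in\sk}p_{\sigma(1)}\dots p_{\sigma(k)}\right)$, the first block carries $\tautilde$, so the terms with the $\barp$'s in $B_{1}$ cancel against those with the $\barp$'s in $B_{2}$; what survives are the terms with all $\barp_{j}$ in the \emph{last} block $B_{a}$, which is again a $\tautilde$-block, and the output is $(-1)^{\bark}a^{k}\left(\sum_{\sigma\in\sk}p_{\sigma(1)}\dots p_{\sigma(k)}\right)\barp_{\bark}\dots\barp_{1}$: the negating primitives migrate to the opposite end, in reversed order. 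Hence your claim that ``the surviving terms are those where the entire $\barp$-block stays inside the extreme tensor factor matching its position in $v$'' is false for the minus operators, and as written your argument would prove that each summand of the vector in (i.c) is separately an eigenvector, which is not true: already $\oriftautildeminus(\barp p)=-a\,p\barp$, not a multiple of $\barp p$, in the signed free associative algebra. The whole point of (i.c) is that $\oriftautildeminus$ interchanges the two summands (up to the sign $(-1)^{\bark}$ and the reversal of the $\barp$'s), so only the sum is preserved; your proposal never engages with this crossing.

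A second, smaller issue: for odd $a$ you assert ``no such pairing exists,'' but a sign-reversing involution is still needed to kill the terms where some $\barp_{j}$ lies in a non-extreme block (e.g.\ $\barp_{\bark}$ in $B_{3}$ when $a=5$). One must pair $(B_{1},B_{2}),\dots,(B_{a-2},B_{a-1})$ leaving $B_{a}$ unpaired, or $(B_{2},B_{3}),\dots,(B_{a-1},B_{a})$ leaving $B_{1}$, and move the extreme-most $\barp_{j}$ not yet in the unpaired block; which block is left unpaired depends on the operator and on which side of $v$ the $\barp$'s sit, and this choice is exactly what determines where (and in what order) the surviving $\barp$-block lands --- the point missed above. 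The paper avoids all of this bookkeeping by first proving the one-primitive identities of Lemma \ref{lem:flipriffle-onenegatingprimitive-evector}, such as $\oriftautildeplus(s\barp)=\oriftautildeplus(s)\barp$ and $\oriftautildeminus(\barp s)=-\oriftautildeminus(s)\barp$ for odd $a$ (and the vanishing statements for even $a$), and then inducting on $\bark$ from the base case of Lemma \ref{lem:symmetrisedproduct-evector}; the side-switching in (i.c) then appears automatically. If you either adopt that one-$\barp$-at-a-time route or repair your direct involution as above, the remainder of your proof goes through.
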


\begin{thm}
\label{thm:rotriffleodd-evector} Let $\calh$ be a graded connected
Hopf algebra, and $\tau:\calh\rightarrow\calh$ a linear graded involution
that is an algebra morphism and a coalgebra morphism or antimorphism.
Fix $a$ odd.

\begin{enumerate}
\item For $p_{1},\dots,p_{k}\in(Prim\calh)^{\tau}$ and $\barp_{1},\dots,\barp_{\bark}\in(Prim\calh)^{-\tau}$(where
$k$ or $\bark$ may be zero),
\begin{equation}
\sum_{B}\overleftarrow{\prod_{i\in B_{1}}}\barp_{i}\left(\sum_{\sigma\in\sk}p_{\sigma(1)}\dots p_{\sigma(k)}\right)\prod_{i\in B_{2}}\barp_{i},\label{eq:rotriffle-evector}
\end{equation}
summing over all set-compositions $B=B_{1}|B_{2}$ of $\{1,2,\dots,\bark\}$
into 2 blocks, is an eigenvector of $\oriftauplus$ with eigenvalue
$a^{k}$, and an eigenvector of $\oriftauminus$ with eigenvalue $(-1)^{\bark}a^{k}$.
($\overleftarrow{\prod}$ denotes that the product should be taken
in the reverse order, with large index $i$ on the left and small
index $i$ on the right - see Example \ref{ex:rotriffle-evector}
below.)
\item If $\calp,\barcalp$ are ordered bases of $(Prim\calh)^{\tau},(Prim\calh)^{-\tau}$
respectively, then the vectors in (\ref{eq:rotriffle-evector}), over
all choices of $p_{1}\preceq\dots\preceq p_{k}\in\calp$ and $\barp_{1}\preceq\dots\preceq\barp_{\bark}\in\barcalp$
(\textcolor{green}{allowing $k$ and $\bark$ to vary}), are linearly
independent.
\item Furthermore, if $\calh$ is cocommutative, then the set described
in ii above is a basis of eigenvectors for $\oriftauplus$ and $\oriftauminus$.
\end{enumerate}
\end{thm}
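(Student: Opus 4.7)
My plan is to prove part (i) by induction on $\bark$, exploiting a simple recursive structure for $E$. A set-composition $B=B_{1}|B_{2}$ of $\{1,\dots,\bark\}$ is classified by whether $\bark\in B_{1}$ or $\bark\in B_{2}$; since $\bark$ is the largest index, in the first case $\barp_{\bark}$ appears as the leftmost letter of $\overleftarrow{\prod}_{B_{1}}\barp_{i}$, and in the second case it is the rightmost letter of $\prod_{B_{2}}\barp_{i}$. Letting $E'$ denote the analogous expression in the primitives $p_{1},\dots,p_{k},\barp_{1},\dots,\barp_{\bark-1}$, this yields the recursion $E=\barp_{\bark}\,E'+E'\,\barp_{\bark}$. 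I would reduce the eigenvector claim to verifying that $\oriftauplus$ satisfies the same recursion up to a factor of $a^{k}$.

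The inductive step rests on the algebra-morphism properties of $\Delta^{[a]}$ and of $\id\otimes\tau\otimes\id\otimes\cdots$ (a tensor of algebra morphisms on $\calh^{\otimes a}$). Using $\Delta^{[a]}(\barp_{\bark})=\sum_{r}e_{r}(\barp_{\bark})$ together with $\tau(\barp_{\bark})=-\barp_{\bark}$, one computes
\begin{equation*}
(\id\otimes\tau\otimes\id\otimes\cdots)(\Delta^{[a]}(\barp_{\bark}))=\sum_{r=1}^{a}\epsilon_{r}\,e_{r}(\barp_{\bark}),\qquad \epsilon_{r}:=(-1)^{r-1}.
\end{equation*}
Multiplying this on either side of $(\id\otimes\tau\otimes\id\otimes\cdots)(\Delta^{[a]}(E'))$ in $\calh^{\otimes a}$ and then applying $m^{[a]}$ corresponds to inserting $\barp_{\bark}$ between consecutive factors of the pre-$m^{[a]}$ expression for $\oriftauplus(E')$. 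If $L_{s}$ denotes the result of inserting $\barp_{\bark}$ after the first $s-1$ of these factors, then $\oriftauplus(\barp_{\bark}E')=\sum_{r=1}^{a}\epsilon_{r}L_{r}$ and $\oriftauplus(E'\barp_{\bark})=\sum_{r=1}^{a}\epsilon_{r}L_{r+1}$, whose sum equals $\epsilon_{1}L_{1}+\sum_{s=2}^{a}(\epsilon_{s}+\epsilon_{s-1})L_{s}+\epsilon_{a}L_{a+1}$. Because $a$ is odd and $\epsilon_{r}$ alternates in sign, each internal coefficient $\epsilon_{s}+\epsilon_{s-1}$ vanishes, and the surviving endpoints have $\epsilon_{1}=\epsilon_{a}=+1$, giving $\oriftauplus(E)=\barp_{\bark}\cdot\oriftauplus(E')+\oriftauplus(E')\cdot\barp_{\bark}$. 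By the inductive hypothesis $\oriftauplus(E')=a^{k}E'$, so $\oriftauplus(E)=a^{k}E$. The base case $\bark=0$ has $E=\sym(p_{1},\dots,p_{k})$, which lies in the $\tau$-invariant subalgebra (since $\tau$ is an algebra morphism and each $p_{i}\in(Prim\calh)^{\tau}$); then every tensor factor of $\Delta^{[a]}(E)$ is $\tau$-invariant, so $(\id\otimes\tau\otimes\cdots)$ acts trivially and $\oriftauplus$ restricts to the type A Adams operator $m^{[a]}\circ\Delta^{[a]}$, whose eigenvalue $a^{k}$ on $\sym(p_{1},\dots,p_{k})$ follows from the multinomial identity $\sum_{(k_{1},\dots,k_{a})}\binom{k}{k_{1},\dots,k_{a}}=a^{k}$, needing only primitivity and not cocommutativity. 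For $\oriftauminus$ the pattern shifts to $\epsilon_{r}=(-1)^{r}$; the internal terms again cancel, but now $\epsilon_{1}=\epsilon_{a}=-1$ produces an extra sign at each recursive step and hence the factor $(-1)^{\bark}$ by induction.

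For part (ii), each eigenvector $E$ has the leading length-$(k+\bark)$ term $k!\cdot p_{1}\cdots p_{k}\barp_{1}\cdots\barp_{\bark}$ (from the summand $B_{1}=\emptyset$ after symmetrizing over $\sigma$); distinct multiset choices yield distinct leading terms, so the set is linearly independent---directly via a PBW basis in the cocommutative case, or in general by passing to the cocommutative $(\gr\calh)^{*}$ as in the proof of Theorem \ref{thm:evalues}. For part (iii), the number of our eigenvectors with eigenvalue $a^{k}$ in $\calhn$ equals the number of pairs (multiset from $\calp$ of size $k$, multiset from $\barcalp$) with total degree $n$, namely $[x^{k}y^{n}]\prod_{i}(1-xy^{i})^{-b_{i}}\prod_{i}(1-y^{i})^{-\bar{b}_{i}}$, matching the multiplicity in Proposition \ref{prop:oriffle-evalues}; hence the linearly independent set is a basis. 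The $\oriftauminus$ case splits by the parity of $\bark$, matching the two halves of the generating function identity in Proposition \ref{prop:oriffle-evalues}.

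The main obstacle I expect is spotting this recursion. Since $\oriftauplus$ is not generally an algebra morphism when $\calh$ is non-commutative (convolutions of algebra morphisms give algebra morphisms only in the commutative setting), one cannot directly push $\oriftauplus$ through $\barp_{\bark}E'+E'\barp_{\bark}$. The trick is to keep $\Delta^{[a]}$ and the twist $(\id\otimes\tau\otimes\cdots)$ separate---each individually is an algebra morphism---and exploit the alternating signs $\epsilon_{r}$ to arrange an internal telescoping that depends essentially on the parity of $a$. Once this is set up, the computation is clean, and the parallel argument for $\oriftauminus$ requires only tracking the sign shift.
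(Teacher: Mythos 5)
Your proof is correct and, for parts (i) and (ii), follows essentially the same route as the paper: induction on $\bark$ with the symmetrised product as base case (the paper's Lemma \ref{lem:symmetrisedproduct-evector}), and the identity $\oriftauplus(\barp s+s\barp)=\barp\oriftauplus(s)+\oriftauplus(s)\barp$ (and its signed analogue for $\oriftauminus$) as the inductive step (the paper's Lemma \ref{lem:rotriffle-onenegatingprimitive-evector}), together with the observation that the sum (\ref{eq:rotriffle-evector}) satisfies $E=\barp_{\bark}E'+E'\barp_{\bark}$. Your derivation of the key identity is a cleaner repackaging: instead of writing out the set-composition sums and cancelling line by line as the paper does, you use that $\Delta^{[a]}$ and the twist $\id\otimes\tau\otimes\cdots$ are algebra morphisms to get the telescoping $\epsilon_1L_1+\sum_s(\epsilon_s+\epsilon_{s-1})L_s+\epsilon_aL_{a+1}$ directly; this is the same cancellation, expressed more conceptually, and it correctly isolates where the hypotheses ``$\tau$ an algebra morphism'' and ``$a$ odd'' enter. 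For part (iii) you diverge mildly: you match the count of constructed eigenvectors per eigenvalue against the algebraic multiplicities of Proposition \ref{prop:oriffle-evalues}, whereas the paper argues that the eigenvectors are triangular with respect to a PBW basis and invokes Cartier--Milnor--Moore; both are valid, though your route is not logically independent since those multiplicities ultimately rest on the same PBW machinery. Two small inaccuracies, neither damaging: the leading PBW coefficient in part (ii) is $2^{\bark}k!$ rather than $k!$ (every summand, not only those with $B_1=\emptyset$, straightens to the sorted monomial with coefficient $+1$), and all that matters is that it is non-zero; and for part (ii) over a general (non-cocommutative) $\calh$, passing to $(\gr\calh)^{*}$ is not the right device for transporting these specific vectors --- the paper's fix is simply to work inside the subalgebra of $\calh$ generated by $Prim(\calh)$, which is automatically cocommutative and hence has a PBW basis.
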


\begin{example}
\label{ex:rotriffle-evector} If $k=2$ and $\bark=3$, then the eigenvector
given by (\ref{eq:rotriffle-evector}) is
\begin{align*}
\barp_{3}\barp_{2}\barp_{1}(p_{1}p_{2}+p_{2}p_{1}) & +\barp_{2}\barp_{1}(p_{1}p_{2}+p_{2}p_{1})\barp_{3}\\
+\barp_{3}\barp_{1}(p_{1}p_{2}+p_{2}p_{1})\barp_{2} & +\barp_{3}\barp_{2}(p_{1}p_{2}+p_{2}p_{1})\barp_{1}\\
+\barp_{1}(p_{1}p_{2}+p_{2}p_{1})\barp_{2}\barp_{3} & +\barp_{2}(p_{1}p_{2}+p_{2}p_{1})\barp_{1}\barp_{3}\\
+\barp_{3}(p_{1}p_{2}+p_{2}p_{1})\barp_{1}\barp_{2} & +(p_{1}p_{2}+p_{2}p_{1})\barp_{1}\barp_{2}\barp_{3}.
\end{align*}
For a further example in the free associative algebra, see Example
\ref{ex:rotriffle-evectoralgo}.
\end{example}

When $a$ is even, we can only identify the eigenvectors of $\oriftauplus$
and $\oriftauminus$ with non-zero eigenvalues:
\begin{thm}
\label{thm:rotriffleeven-evector} Let $\calh$ be a graded connected
Hopf algebra, and $\tau:\calh\rightarrow\calh$ a linear graded involution
that is an algebra morphism and a coalgebra morphism or antimorphism.
Fix $a$ even.

\begin{enumerate}
\item For $p_{1},\dots,p_{k}\in(Prim\calh)^{\tau}$,
\[
\sum_{\sigma\in\sk}p_{\sigma(1)}\dots p_{\sigma(k)}
\]
 is an eigenvector of $\oriftauplus$ and $\oriftauminus$ with eigenvalue
$a^{k}$.
\item If $\calp$ is an ordered basis of $(Prim\calh)^{\tau}$, then the
vectors described in i above, over all choices of $p_{1}\preceq\dots\preceq p_{k}\in\calp$,
are linearly independent.
\item Furthermore, if $\calh$ is cocommutative, then the set of eigenvectors
described in ii above, for each fixed $k$, is a basis for the eigenspace
of eigenvalue $a^{k}$, for both $\oriftauplus$ and $\oriftauminus$,
and \textcolor{green}{there are no generalised eigenvectors for these
eigenvalues}. In other words, the eigenvectors described in ii, over
all values of $k$, \textcolor{green}{is a basis for a complement
of the generalised eigenspace of eigenvalue 0}. 
\end{enumerate}
\end{thm}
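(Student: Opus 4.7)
The plan is to handle the three parts of the theorem in order, each using techniques already established in the paper.

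For part (i), the key observation is that $\tau$ acts as the identity on the subalgebra of $\calh$ generated by $(\mathrm{Prim}\calh)^{\tau}$: since $\tau$ is an algebra morphism and fixes each generator, it fixes every product of such generators. Now $\Delta^{[a]}$ is an algebra morphism (for the componentwise product on $\calh^{\otimes a}$), and $\Delta^{[a]}(p_i) = \sum_{j=1}^{a} 1 \otimes \dots \otimes p_i \otimes \dots \otimes 1$ for primitive $p_i$, so every tensorand appearing in the expansion of $\Delta^{[a]}(p_1 \cdots p_k)$ is a product of $p_i$'s; the $\tau$ factors occurring in $\oriftauplus$ and $\oriftauminus$ therefore act trivially, and both operators agree on this subalgebra with the Adams operation $m^{[a]} \circ \Delta^{[a]}$. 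It then suffices to verify that $\sum_{\sigma \in \sk} p_{\sigma(1)} \cdots p_{\sigma(k)}$ is an $a^k$-eigenvector of the Adams operation, which is standard \cite{hopfpowerchains,descentoperatorchains}; directly, expanding $\Delta^{[a]}$ as a sum over functions $f \colon [k] \to [a]$ and then summing over $\sigma \in \sk$ contributes, for each $f$, one full copy of the symmetrisation (the substitution $\tau_f := \sigma \circ \pi_f$ is a bijection of $\sk$, where $\pi_f$ is the permutation of $[k]$ reading the preimages of $f$ in order), yielding $a^k$ copies in total.

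For part (ii), I would expand each candidate eigenvector in the PBW basis of $\calh$ coming from any ordered basis of $\mathrm{Prim}\calh$ extending $\calp$. By the PBW straightening algorithm used in the proof of Lemma \ref{lem:trilemma}, each summand $p_{\sigma(1)} \cdots p_{\sigma(k)}$ rewrites as $p_1 \cdots p_k$ plus strictly shorter PBW monomials, so the symmetrisation has leading length term $k! \cdot p_1 \cdots p_k$. Distinct sorted tuples $p_1 \preceq \cdots \preceq p_k$ therefore produce vectors with distinct leading PBW basis elements, establishing linear independence.

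For part (iii), under cocommutativity, match the geometric multiplicity produced by (i)--(ii) against the algebraic multiplicity from Proposition \ref{prop:oriffle-evalues}. The number of sorted $k$-tuples in $\calp$ with total degree $n$ is precisely $[x^k y^n]\prod_i(1 - xy^i)^{-b_i}$, which is exactly the algebraic multiplicity of $a^k$ as an eigenvalue of $\oriftauplus$ or $\oriftauminus$ on $\calhn$ from Proposition \ref{prop:oriffle-evalues}. Hence the linearly independent $a^k$-eigenvectors from (i)--(ii) already span the entire generalised $a^k$-eigenspace, so they form a basis of the genuine eigenspace and no generalised eigenvectors exist beyond it; summing over $k$ gives a basis of a complement of the generalised $0$-eigenspace.

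The main obstacle is the reduction in (i): because the symmetrisation is not itself a PBW basis element, the Triangularity Lemma does not apply directly, so the identification with the Adams operation (and the resulting combinatorial count of functions $[k] \to [a]$) carries the substantive content. Once that step is in place, parts (ii) and (iii) are routine PBW leading-term and dimension-count arguments.
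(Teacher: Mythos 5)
Your proposal is correct and takes essentially the same route as the paper: part (i) is the content of Lemma \ref{lem:symmetrisedproduct-evector} (your reduction to the Adams operation, via $\tau$ acting trivially on products of $\tau$-invariant primitives, is a repackaging of its set-composition/symmetrisation count), and parts (ii)--(iii) are the paper's PBW leading-term and multiplicity-matching arguments, with Proposition \ref{prop:oriffle-evalues} supplying the same count that the paper reads off the Triangularity Lemma. One cosmetic fix: in (ii), since $\calh$ is not assumed cocommutative there, expand inside the subalgebra generated by the primitives (a universal enveloping algebra, which is how the paper phrases it) rather than speaking of a PBW basis of $\calh$ itself.
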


The proof of Theorems \ref{thm:flipriffle-evector} and \ref{thm:rotriffleodd-evector}
follow the same structure of an induction on $\bark$, the number
of $\tau$-negating (or $\tautilde$-negating) primitives; Lemma \ref{lem:symmetrisedproduct-evector}
below is the common base case ($\bark=0$), and\textcolor{green}{{}
to increase $\bark$ we use a sign-reversing involution argument:
Lemma \ref{lem:rotriffle-onenegatingprimitive-evector} for an algebra
morphism, and Lemma \ref{lem:flipriffle-onenegatingprimitive-evector}
for an algebra antimorphism.} Since Theorem \ref{thm:rotriffleeven-evector}
does not involve $\tau$-negating primitives, its proof requires only
the previous ``base case'' of Lemma \ref{lem:symmetrisedproduct-evector},
plus a \textcolor{green}{counting argument}. \textcolor{green}{Note
that this Lemma applies to all four types of Hopf ambimorphism, i.e.
to both $\tau$ and $\tautilde$.}
\begin{lem}
\label{lem:symmetrisedproduct-evector} Let $\calh$ be a graded connected
Hopf algebra, and $\tau:\calh\rightarrow\calh$ be a linear graded
involution that is a Hopf ambimorphism. For $p_{1},\dots,p_{k}\in(Prim\calh)^{\tau}$,
the symmetrised product $\sum_{\sigma\in\sk}p_{\sigma(1)}\dots p_{\sigma(k)}$
is an eigenvector of $\oriftauplus$ and $\oriftauminus$, of eigenvalue
$a^{k}$. 
\end{lem}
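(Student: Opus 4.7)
The plan is to expand $\oriftauplus$ and $\oriftauminus$ applied to a single ordered product $p_{\sigma(1)} \cdots p_{\sigma(k)}$, then sum over $\sigma \in \sk$ and reparametrise combinatorially. First I would use the primitivity of each $p_j$: $\Delta^{[a]}(p_j) = \sum_{i=1}^{a} 1^{\otimes(i-1)} \otimes p_j \otimes 1^{\otimes(a-i)}$, and by multiplicativity of the iterated coproduct,
\[
\Delta^{[a]}(p_{\sigma(1)} \cdots p_{\sigma(k)}) = \sum_{f \colon [k] \to [a]} \bigotimes_{i=1}^{a} \prod_{j \in f^{-1}(i)} p_{\sigma(j)},
\]
with each slot product taken in increasing order of $j$. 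The alternating operator $(\id \otimes \tau \otimes \cdots)$ has no effect on the multiset in each slot, since $p_j \in (Prim\calh)^\tau$ and $\tau$ is an algebra (anti)morphism, so $\tau$ applied to any slot product either leaves it unchanged or reverses the order of its factors. Then $m$ concatenates the slot contents into a single word.

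Next I would symmetrise and reparametrise. A pair $(\sigma, f)$ is equivalent to a slot assignment $h := f \circ \sigma^{-1} \colon [k] \to [a]$ together with linear orderings $\pi_i$ of the sets $T_i := h^{-1}(i)$, and a direct count shows that each tuple $(h, (\pi_1, \dots, \pi_a))$ arises from exactly $\binom{k}{|T_1|, \dots, |T_a|}$ permutations $\sigma$ (the interleaving count). Because reversal is a bijection on the orderings of $T_i$, the sum over $\pi_i$ is unchanged whether slot $i$ received $\id$ or $\tau$; in particular $\oriftauplus$ and $\oriftauminus$ produce identical totals. Hence
\[
\sum_{\sigma \in \sk} \orif\tau_a^{\pm}(p_{\sigma(1)} \cdots p_{\sigma(k)}) = \sum_{h \colon [k] \to [a]} \binom{k}{|T_1|, \dots, |T_a|} \prod_{i=1}^{a} P(T_i),
\]
where $P(T_i) := \sum_{\pi_i} p_{\pi_i(1)} \cdots p_{\pi_i(|T_i|)}$ is the symmetrised product over $T_i$.

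Finally, expanding each $P(T_i)$ and recollecting by the overall permutation of $[k]$, every monomial $p_{\pi(1)} \cdots p_{\pi(k)}$ acquires total coefficient $\sum_{c_1 + \cdots + c_a = k,\ c_i \geq 0} \binom{k}{c_1, \dots, c_a} = a^k$ by the multinomial theorem applied to $(1 + \cdots + 1)^k$. This gives $\oriftauplus(P) = \oriftauminus(P) = a^k P$ for $P = \sum_\sigma p_{\sigma(1)} \cdots p_{\sigma(k)}$, as required. The main obstacle is really just setting up the reparametrisation $(\sigma, f) \leftrightarrow (h, (\pi_i))$ with its multinomial weight correctly; once that is in place, the reversal-bijection observation handles all four subcases (algebra morphism vs.\ antimorphism, and $\orif^{+}$ vs.\ $\orif^{-}$) uniformly, and the final count collapses by the multinomial theorem.
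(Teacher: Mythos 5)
Your proposal is correct and follows essentially the same route as the paper: both expand $\Delta^{[a]}$ of a product of primitives over slot assignments (equivalently, set-compositions of $\{1,\dots,k\}$ into $a$ blocks), use that $\tau$ fixes each $p_i$ and at worst reverses the order within a slot, and conclude that the symmetrised product is an eigenvector with eigenvalue $a^{k}$, the number of such assignments. Your explicit reindexing of pairs $(\sigma,f)$ with multinomial weights and the multinomial theorem is simply a more detailed bookkeeping of the paper's observation that the permissible rearrangements do not depend on the particular $p_i$.
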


\begin{proof}
The argument is essentially the same for all four operators of (\ref{eq:oriffle-def}). For concreteness, consider first $\oriftautildeplus$
where $a$ is odd.
\[
\Delta^{[a]}(p_{1}\dots p_{k})=\sum_{B}\left(\prod_{i\in B_{1}}p_{i}\right)\otimes\left(\prod_{i\in B_{2}}p_{i}\right)\otimes\dots\otimes\left(\prod_{i\in B_{a}}p_{i}\right),
\]
summing over all set-compositions $B$ of $\{1,\dots,k\}$ with $a$
blocks. So
\begin{equation}
(\id\otimes\tautilde\otimes\id\otimes\tautilde\dots\otimes\id)\Delta^{[a]}(p_{1}\dots p_{k})=\sum_{B}\left(\prod_{i\in B_{1}}p_{i}\right)\otimes\left(\overleftarrow{\prod_{i\in B_{2}}}p_{i}\right)\otimes\dots\otimes\left(\prod_{i\in B_{a}}p_{i}\right),\label{eq:flipriffle-symlemma}
\end{equation}
where the arrows above the product signs indicate reversing the order
of the product. Hence $\oriftautildeplus(p_{1}\dots p_{k})$ is a
sum of products of the same primitives, in a different order, and
the permissable orders do not depend on anything particular about
each $p_{i}$. Thus $\sum_{\sigma\in\sk}p_{\sigma(1)}\dots p_{\sigma(k)}$
is an eigenvector of $\oriftautildeplus$ and the eigenvalue is the
number of summands in (\ref{eq:flipriffle-symlemma}), i.e. the number
of set-compositions of $\{1,\dots,k\}$ with $a$ blocks. Since each
number can belong to any block, the required number is $a^{k}$.

The other seven cases ($\oriftauplus$ with $a$ even, $\oriftauminus$
with $a$ even or $a$ odd, $\oriftautildeplus$ and $\oriftautildeminus$
for all values of $a$) are similar: the directions of the products
in (\ref{eq:flipriffle-symlemma}) are different, but each summand
is nevertheless a product of the same primitives in a different order,
hence the argument above applies.
\end{proof}

\begin{proof}[Proof of Theorem \ref{thm:rotriffleeven-evector}]
 Let $\beta_{\lambda,\barlambda}^{a,+}$ (resp. $\beta_{\lambda,\barlambda}^{a,-}$)
denote the eigenvalue of $\oriftauplus$ (resp. $\oriftauminus$)
corresponding to the double-partition $\lambda,\barlambda$, as
in the proof of Proposition \ref{prop:riffle-composition}.
\begin{enumerate}
\item Immediate from Lemma \ref{lem:symmetrisedproduct-evector}.
\item Work in the subalgebra of $\calh$ that is the universal enveloping
algebra of $(Prim\calh)^{\tau}$, with a PBW basis formed from $\calp$.
As explained in the Triangularity Lemma (\ref{lem:trilemma}), the
eigenvector $\sum_{\sigma\in\sk}p_{\sigma(1)}\dots p_{\sigma(k)}$
is a sum of products of $p_{1},\dots,p_{k}$ in different orders,
and hence its highest length term in this PBW basis is $k!p_{1}\dots p_{k}$.
Thus each eigenvector has a different highest length term, and so
they are linearly independent.
\item Let $\barcalp$ be a basis of $(Prim\calh)^{-\tau}$. As noted in
\textcolor{green}{the paragraph after Lemma \ref{lem:trilemma}}:
in a cocommutative Hopf algebra, the multiplicity of the eigenvalue
$\beta_{\lambda,\barlambda}^{a,+}$ or $\beta_{\lambda,\barlambda}^{a,-}$
is the number of multiset pairs $\{p_{1},\dots,p_{k}\}\subseteq\calp$,
$\{\barp_{1}\dots,\barp_{\bark}\}\subseteq\barcalp$ where $\lambda=(\deg p_{1},\dots,\deg p_{k})$
and $\barlambda=(\deg\barp_{1},\dots,\deg\barp_{\bark})$. \textcolor{green}{The
constructed eigenvectors accounts for all cases where $\barlambda=\emptyset$.
And, in the }proof of Proposition \ref{prop:riffle-composition},
a sign-reversing involution showed that $\beta_{\lambda,\barlambda}^{a,+}=\beta_{\lambda,\barlambda}^{a,-}=0$
if $\barlambda\neq\emptyset$.
\end{enumerate}
\end{proof}
The following two lemmas are the sign-reversing involutions required
to inductively construct eigenvectors.
\begin{lem}
\label{lem:flipriffle-onenegatingprimitive-evector} Let $\calh$
be a graded connected Hopf algebra, and $\tautilde:\calh\rightarrow\calh$
be a linear graded involution that is an algebra antimorphism and
a coalgebra morphism or antimorphism. Let $\barp$ be a $\tautilde$-negating
primitive element of $\calh$, and $s$ be any product of primitive
elements. Then 
\begin{alignat}{3}
\mbox{for }a\mbox{ odd}: & \quad & \oriftautildeplus(\barp s) & =\barp\oriftautildeplus(s), & \oriftautildeplus(s\barp) & =\oriftautildeplus(s)\barp,\nonumber \\
 &  & \oriftautildeminus(\barp s) & =-\oriftautildeminus(s)\barp,\quad & \oriftautildeminus(s\barp) & =-\barp\oriftautildeminus(s);\nonumber \\
\mbox{for }a\mbox{ even}: &  &  &  & \oriftautildeplus(s\barp) & =0,\nonumber \\
 &  & \oriftautildeminus(\barp s) & =0.\label{eq:flipriffle-onenegatingprimitive-evector}
\end{alignat}
\end{lem}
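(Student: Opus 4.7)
The plan is to compute $\Delta^{[a]}$ on a product of primitive elements explicitly. Since $\barp$ and the factors of $s = s_1 \dots s_m$ are all primitive, and $\Delta$ is an algebra morphism, one has
\[
\Delta^{[a]}(\barp s) = \sum_{B} \left(\prod_{i \in B_1} x_i\right) \otimes \dots \otimes \left(\prod_{i \in B_a} x_i\right),
\]
summing over set-compositions $B$ of $\{0, 1, \dots, m\}$ into $a$ blocks (with $x_0 = \barp$ and $x_i = s_i$), and with each block's product taken in the order inherited from the global ordering. I would then organize this sum by the position $j$ of the block containing $\barp$: each summand factors as a pair consisting of a set-composition $B'$ of $\{1, \dots, m\}$ and an index $j \in \{1, \dots, a\}$, producing one contribution $T_j(B')$ in which $\barp$ sits at the \emph{left} end of the $j$-th block's product. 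A parallel expansion handles $\Delta^{[a]}(s\barp)$, with $\barp$ at the right end of its block.

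The central identity is
\[
\tautilde(\barp \cdot q) = \tautilde(q)\tautilde(\barp) = -\tautilde(q)\barp, \qquad \tautilde(q \cdot \barp) = -\barp \tautilde(q),
\]
combining the algebra antimorphism property of $\tautilde$ with $\tautilde(\barp) = -\barp$. Applied to each $T_j(B')$, this shows that placing $\barp$ into a block where $\tautilde$ acts effectively moves $\barp$ to the opposite side of that block's content and inserts a sign. Consequently, for each fixed $B'$, two adjacent indices $T_j(B')$ and $T_{j+1}(B')$ produce, after the final multiplication $m$, the same ordered product of primitives with $\barp$ inserted at the \emph{same} location of the resulting word, but with opposite signs; hence they cancel. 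Which adjacent pairs cancel is determined by the parity pattern of $\id$'s and $\tautilde$'s in the given operator.

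The surviving boundary term then yields the claimed formula. For $\oriftautildeplus(\barp s)$ with $a$ odd, the cancellations are $T_2 + T_3$, $T_4 + T_5$, $\dots$, $T_{a-1} + T_a$, leaving $T_1(B')$ which equals $\barp$ times the $B'$-summand of $\oriftautildeplus(s)$; summing over $B'$ gives $\barp \oriftautildeplus(s)$. The analogous bookkeeping gives the other three odd-$a$ identities: boundary terms at $j = a$ produce $\oriftautildeplus(s)\barp$, and boundary terms that fall inside a $\tautilde$-block pick up an extra sign, yielding the two $\oriftautildeminus$ equations. For $a$ even, the $\id$/$\tautilde$ alternation in $\oriftautildeplus(s\barp)$ and in $\oriftautildeminus(\barp s)$ is such that every $j$ is matched by a cancelling neighbour, so the results are $0$.

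The main obstacle is not conceptual but purely bookkeeping: one must verify, case by case across the six sub-statements, which pairing $(T_j, T_{j+1})$ cancels and which $T_j$ survives as the boundary. All six cases rest on the same underlying sign-reversing involution on set-compositions of $\{0, 1, \dots, m\}$ which swaps the block containing the index $0$ with an adjacent block, the sign change arising precisely from $\tautilde(\barp) = -\barp$ together with the antimorphism rule.
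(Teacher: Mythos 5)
Your proposal is correct and follows essentially the same route as the paper's proof: expand $\Delta^{[a]}$ on a product of primitives, index summands by the block containing $\barp$, use $\tautilde(\barp q)=-\tautilde(q)\barp$ (and its mirror) to move $\barp$ across a $\tautilde$-block with a sign, and cancel adjacent block-placements pairwise so that only a boundary term survives (or nothing, in the even-$a$ cases). The paper merely carries out the same bookkeeping explicitly (illustrated at $a=5$) rather than phrasing it as a sign-reversing involution, so no further changes are needed.
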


\begin{proof}
Consider first the equations on the left, involving $\barp s$. Let
$s=p_{1}\dots p_{k}$ and, for clearer exposition, first set $a=5$.
\begin{align}
\Delta^{[5]}(\barp p_{1}\dots p_{k}) & =\sum_{B}\left(\barp\prod_{i\in B_{1}}p_{i}\right)\otimes\left(\prod_{i\in B_{2}}p_{i}\right)\otimes\left(\prod_{i\in B_{3}}p_{i}\right)\otimes\left(\prod_{i\in B_{4}}p_{i}\right)\otimes\left(\prod_{i\in B_{5}}p_{i}\right)\nonumber \\
 & \qquad\qquad+\left(\prod_{i\in B_{1}}p_{i}\right)\otimes\left(\barp\prod_{i\in B_{2}}p_{i}\right)\otimes\left(\prod_{i\in B_{3}}p_{i}\right)\otimes\left(\prod_{i\in B_{4}}p_{i}\right)\otimes\left(\prod_{i\in B_{5}}p_{i}\right)\nonumber \\
 & \qquad\qquad+\left(\prod_{i\in B_{1}}p_{i}\right)\otimes\left(\prod_{i\in B_{2}}p_{i}\right)\otimes\left(\barp\prod_{i\in B_{3}}p_{i}\right)\otimes\left(\prod_{i\in B_{4}}p_{i}\right)\otimes\left(\prod_{i\in B_{5}}p_{i}\right)\nonumber \\
 & \qquad\qquad+\left(\prod_{i\in B_{1}}p_{i}\right)\otimes\left(\prod_{i\in B_{2}}p_{i}\right)\otimes\left(\prod_{i\in B_{3}}p_{i}\right)\otimes\left(\barp\prod_{i\in B_{4}}p_{i}\right)\otimes\left(\prod_{i\in B_{5}}p_{i}\right)\nonumber \\
 & \qquad\qquad+\left(\prod_{i\in B_{1}}p_{i}\right)\otimes\left(\prod_{i\in B_{2}}p_{i}\right)\otimes\left(\prod_{i\in B_{3}}p_{i}\right)\otimes\left(\prod_{i\in B_{4}}p_{i}\right)\otimes\left(\barp\prod_{i\in B_{5}}p_{i}\right),\label{eq:fivefold-coproduct-primitives}
\end{align}
summing over all set-compositions $B$ of $\{1,2,\dots,k\}$ into
5 parts. (Each such set-composition contributed 5 terms, \textcolor{green}{for
all 5 possible blocks where $\barp$ may be assigned.}) So
\begin{align}
\orif\tautilde_{5}^{+}(\barp p_{1}\dots p_{k}) & =\sum_{B}\left(\barp\prod_{i\in B_{1}}p_{i}\right)\tautilde\left(\prod_{i\in B_{2}}p_{i}\right)\left(\prod_{i\in B_{3}}p_{i}\right)\tautilde\left(\prod_{i\in B_{4}}p_{i}\right)\left(\prod_{i\in B_{5}}p_{i}\right)\nonumber \\
 & \qquad\qquad-\left(\prod_{i\in B_{1}}p_{i}\right)\left[\tautilde\left(\prod_{i\in B_{2}}p_{i}\right)\barp\right]\left(\prod_{i\in B_{3}}p_{i}\right)\tautilde\left(\prod_{i\in B_{4}}p_{i}\right)\left(\prod_{i\in B_{5}}p_{i}\right)\nonumber \\
 & \qquad\qquad+\left(\prod_{i\in B_{1}}p_{i}\right)\tautilde\left(\prod_{i\in B_{2}}p_{i}\right)\left(\barp\prod_{i\in B_{3}}p_{i}\right)\tautilde\left(\prod_{i\in B_{4}}p_{i}\right)\left(\prod_{i\in B_{5}}p_{i}\right)\nonumber \\
 & \qquad\qquad-\left(\prod_{i\in B_{1}}p_{i}\right)\tautilde\left(\prod_{i\in B_{2}}p_{i}\right)\left(\prod_{i\in B_{3}}p_{i}\right)\left[\tautilde\left(\prod_{i\in B_{4}}p_{i}\right)\barp\right]\left(\prod_{i\in B_{5}}p_{i}\right)\nonumber \\
 & \qquad\qquad+\left(\prod_{i\in B_{1}}p_{i}\right)\tautilde\left(\prod_{i\in B_{2}}p_{i}\right)\left(\prod_{i\in B_{3}}p_{i}\right)\tautilde\left(\prod_{i\in B_{4}}p_{i}\right)\left(\barp\prod_{i\in B_{5}}p_{i}\right).\label{eq:flipriffle-onenegativeprimitive-1}
\end{align}
(The second factor of the second line and the fourth factor of the
fourth line uses, for all $x$, that $\tautilde(\barp x)=\tautilde(x)\tautilde(\barp)=-\tautilde(x)\barp$.)
Note that, for each fixed set-composition $B$, the second and third
lines of (\ref{eq:flipriffle-onenegativeprimitive-1}) are equal,
except for their opposite sign, so they cancel. Similarly, the fourth
and fifth lines cancel. Thus only the first line remains, and that
is precisely $\barp\orif\tautilde_{5}^{+}(p_{1}\dots p_{k})$.

By this argument, whenever $a$ is odd, $\oriftautildeplus(\barp p_{1}\dots p_{k})$
is a sum of $a$ terms for each set-composition of $\{1,2,\dots,k\}$
into $a$ parts, and each even term cancels with the following odd
term, so only the first term remains and these give $\barp\oriftautildeplus(p_{1}\dots p_{k})$.

Similarly, 
\begin{align*}
\oriftautildeminus(\barp p_{1}\dots p_{k}) & =\sum_{B}-\left[\tautilde\left(\prod_{i\in B_{1}}p_{i}\right)\barp\right]\left(\prod_{i\in B_{2}}p_{i}\right)\tautilde\left(\prod_{i\in B_{3}}p_{i}\right)\dots\\
 & \qquad\qquad+\tautilde\left(\prod_{i\in B_{1}}p_{i}\right)\left(\barp\prod_{i\in B_{2}}p_{i}\right)\tautilde\left(\prod_{i\in B_{3}}p_{i}\right)\dots\\
 & \qquad\qquad-\tautilde\left(\prod_{i\in B_{1}}p_{i}\right)\left(\prod_{i\in B_{2}}p_{i}\right)\left[\tautilde\left(\prod_{i\in B_{3}}p_{i}\right)\barp\right]\dots\\
 & \qquad\qquad+\dots,
\end{align*}
so each odd line cancels with the line below. Hence, if $a$ is even,
the sum entirely cancels and is thus 0; if $a$ is odd, the last line
remains: 
\[
\oriftautildeminus(\barp p_{1}\dots p_{k})=\sum_{B}-\tautilde\left(\prod_{i\in B_{1}}p_{i}\right)\left(\prod_{i\in B_{2}}p_{i}\right)\tautilde\left(\prod_{i\in B_{3}}p_{i}\right)\dots\left[\tautilde\left(\prod_{i\in B_{a}}p_{i}\right)\barp\right]=\oriftautildeminus(p_{1}\dots p_{k})\barp.
\]

For $s\barp$, the calculations are very similar. Again, let $s=p_{1}\dots p_{k}$.
\begin{align*}
\oriftautildeplus(p_{1}\dots p_{k}\barp) & =\sum_{B}\left[\left(\prod_{i\in B_{1}}p_{i}\right)\barp\right]\tautilde\left(\prod_{i\in B_{2}}p_{i}\right)\left(\prod_{i\in B_{3}}p_{i}\right)\dots\\
 & \qquad\qquad-\left(\prod_{i\in B_{1}}p_{i}\right)\left[\barp\tautilde\left(\prod_{i\in B_{2}}p_{i}\right)\right]\left(\prod_{i\in B_{3}}p_{i}\right)\dots\\
 & \qquad\qquad+\left(\prod_{i\in B_{1}}p_{i}\right)\tautilde\left(\prod_{i\in B_{2}}p_{i}\right)\left[\left(\prod_{i\in B_{3}}p_{i}\right)\barp\right]\dots\\
 & \qquad\qquad-\dots,
\end{align*}
so each odd line cancels with the line below, leaving the last line
(i.e. $\oriftautildeplus(p_{1}\dots p_{k})\barp$) if $a$ is odd,
and cancelling entirely if $a$ is even. And, for $a$ odd, 
\begin{align*}
\oriftautildeminus(p_{1}\dots p_{k}\barp) & =\sum_{B}-\left[\barp\tautilde\left(\prod_{i\in B_{1}}p_{i}\right)\right]\left(\prod_{i\in B_{2}}p_{i}\right)\tautilde\left(\prod_{i\in B_{3}}p_{i}\right)\dots\tautilde\left(\prod_{i\in B_{a}}p_{i}\right)\\
 & \qquad\qquad+\tautilde\left(\prod_{i\in B_{1}}p_{i}\right)\left[\left(\prod_{i\in B_{2}}p_{i}\right)\barp\right]\tautilde\left(\prod_{i\in B_{3}}p_{i}\right)\dots\left(\prod_{i\in B_{a}}p_{i}\right)\\
 & \qquad\qquad-\tautilde\left(\prod_{i\in B_{1}}p_{i}\right)\left(\prod_{i\in B_{2}}p_{i}\right)\left[\barp\tautilde\left(\prod_{i\in B_{3}}p_{i}\right)\right]\dots\tautilde\left(\prod_{i\in B_{a}}p_{i}\right)\\
 & \qquad\qquad+\dots\\
 & \qquad\qquad-\tautilde\left(\prod_{i\in B_{1}}p_{i}\right)\left(\prod_{i\in B_{2}}p_{i}\right)\tautilde\left(\prod_{i\in B_{3}}p_{i}\right)\dots\left[\barp\tautilde\left(\prod_{i\in B_{a}}p_{i}\right)\right].
\end{align*}
where each even line cancels with the line below, leaving the first
line (i.e. $\barp\oriftautildeminus(p_{1}\dots p_{k})$).
\end{proof}

\begin{lem}
\label{lem:rotriffle-onenegatingprimitive-evector} Let $\calh$ be
a graded connected Hopf algebra, and $\tau:\calh\rightarrow\calh$
be a linear graded involution that is an algebra morphism and a coalgebra
morphism or antimorphism. Let $\barp$ be a $\tau$-negating primitive
element of $\calh$, and $s$ be any product of primitive elements.
Then, if $a$ is odd: 
\begin{align}
\oriftauplus(\barp s+s\barp) & =\barp\oriftauplus(s)+\oriftauplus(s)\barp;\nonumber \\
\oriftauminus(\barp s+s\barp) & =-\barp\oriftauminus(s)-\oriftauminus(s)\barp.\label{eq:rotriffle-onenegatingprimitive-evector}
\end{align}
\end{lem}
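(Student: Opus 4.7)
The plan is to adapt the expansion-and-cancellation method of Lemma~\ref{lem:flipriffle-onenegatingprimitive-evector} to the algebra-morphism setting. Write $s = p_1 \cdots p_k$. Since $\barp$ is primitive and $\Delta^{[a]}$ is an algebra morphism,
\[
\Delta^{[a]}(\barp\, s) \;=\; \Delta^{[a]}(\barp)\,\Delta^{[a]}(s) \;=\; \sum_{B}\sum_{j=1}^{a} \left(\prod_{i\in B_1} p_i\right)\otimes\cdots\otimes\left(\barp \prod_{i\in B_j} p_i\right)\otimes\cdots\otimes\left(\prod_{i\in B_a} p_i\right),
\]
where the outer sum is over set-compositions $B$ of $\{1,\dots,k\}$ into $a$ blocks; similarly $\Delta^{[a]}(s\,\barp)$ is the analogous sum with $\barp$ appended to (rather than prepended to) the $j$-th factor.

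Next, apply the alternating $\id/\tau$ pattern and the product. The crucial new input, contrasting with Lemma~\ref{lem:flipriffle-onenegatingprimitive-evector}, is that $\tau$ is now an algebra \emph{morphism}, so $\tau(\barp\cdot x) = \tau(\barp)\tau(x) = -\barp\,\tau(x)$ and $\tau(x\cdot\barp) = -\tau(x)\,\barp$; that is, $\barp$ can be pulled out of a $\tau$-slot on either side at the cost of a global minus sign. Writing $T_B^{(i)}$ for the $i$-th tensor factor of $(\id\otimes\tau\otimes\id\otimes\cdots)\,\Delta^{[a]}(s)$ and letting $\epsilon_j \in \{0,1\}$ record whether slot $j$ carries a $\tau$, the slot-$j$ contribution to $\oriftauplus(\barp s)$ multiplies out to
\[
A_j \;=\; (-1)^{\epsilon_j}\, T_B^{(1)} \cdots T_B^{(j-1)}\,\barp\, T_B^{(j)} \cdots T_B^{(a)},
\]
while the slot-$j$ contribution to $\oriftauplus(s\barp)$ is $B_j = (-1)^{\epsilon_j}\, T_B^{(1)} \cdots T_B^{(j)}\,\barp\, T_B^{(j+1)} \cdots T_B^{(a)}$.

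The telescoping observation is that $A_{j+1}$ and $B_j$ place $\barp$ at the same position in the associative product of $T_B^{(1)},\dots,T_B^{(a)}$, but their signs are opposite because $\epsilon_j + \epsilon_{j+1} = 1$ under the alternating pattern. Hence $B_j + A_{j+1} = 0$ for $j = 1,\dots,a-1$, and only the extremal terms $A_1$ and $B_a$ survive. For $\oriftauplus$ with $a$ odd, slots $1$ and $a$ are both $\id$-slots, so $\epsilon_1 = \epsilon_a = 0$, and summing over $B$ yields $\barp\,\oriftauplus(s) + \oriftauplus(s)\,\barp$. For $\oriftauminus$ with $a$ odd, the $\id/\tau$ roles swap, so $\epsilon_1 = \epsilon_a = 1$, and both surviving boundary terms carry sign $-1$, giving $-\barp\,\oriftauminus(s) - \oriftauminus(s)\,\barp$.

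The main obstacle is the sign bookkeeping: one must verify carefully that the $-1$ from $\tau(\barp) = -\barp$ interacts with the alternating $\id/\tau$ pattern in just the right way so that $B_j$ and $A_{j+1}$ cancel rather than double. Once that pairing is confirmed, the telescoping collapses the sum to the two boundary terms with no further calculation. I do not expect the coalgebra-morphism-versus-antimorphism distinction for $\tau$ to enter the argument, since the coproduct expansions above use only that $\Delta^{[a]}$ is an algebra morphism of $\calh$, a fact independent of $\tau$.
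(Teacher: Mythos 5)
Your proposal is correct and follows essentially the same route as the paper's proof: expand $\Delta^{[a]}(\barp s)$ and $\Delta^{[a]}(s\barp)$ over set-compositions, use that $\tau$ is an algebra morphism with $\tau(\barp)=-\barp$ to pull $\barp$ out of a $\tau$-slot at the cost of a sign, and cancel the term with $\barp$ appended to slot $j$ against the term with $\barp$ prepended to slot $j+1$, leaving only the two boundary terms. Your $A_j$/$B_j$ sign bookkeeping is exactly the pairing the paper carries out concretely for $a=5$, and your observation that no coalgebra property of $\tau$ is needed here is consistent with the paper's argument.
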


\begin{proof}
Let $s=p_{1}\dots p_{k}$, and, as in the proof of Lemma \ref{lem:flipriffle-onenegatingprimitive-evector},
first set $a=5$ for clearer exposition. From (\ref{eq:fivefold-coproduct-primitives}),
we have
\begin{align}
\oriftauplus(\barp p_{1}\dots p_{k}) & =\sum_{B}\left(\barp\prod_{i\in B_{1}}p_{i}\right)\tau\left(\prod_{i\in B_{2}}p_{i}\right)\left(\prod_{i\in B_{3}}p_{i}\right)\tau\left(\prod_{i\in B_{4}}p_{i}\right)\left(\prod_{i\in B_{5}}p_{i}\right)\nonumber \\
 & \qquad\qquad-\left(\prod_{i\in B_{1}}p_{i}\right)\left[\barp\tau\left(\prod_{i\in B_{2}}p_{i}\right)\right]\left(\prod_{i\in B_{3}}p_{i}\right)\tau\left(\prod_{i\in B_{4}}p_{i}\right)\left(\prod_{i\in B_{5}}p_{i}\right)\nonumber \\
 & \qquad\qquad+\left(\prod_{i\in B_{1}}p_{i}\right)\tau\left(\prod_{i\in B_{2}}p_{i}\right)\left(\barp\prod_{i\in B_{3}}p_{i}\right)\tau\left(\prod_{i\in B_{4}}p_{i}\right)\left(\prod_{i\in B_{5}}p_{i}\right)\nonumber \\
 & \qquad\qquad-\left(\prod_{i\in B_{1}}p_{i}\right)\tau\left(\prod_{i\in B_{2}}p_{i}\right)\left(\prod_{i\in B_{3}}p_{i}\right)\left[\barp\tau\left(\prod_{i\in B_{4}}p_{i}\right)\right]\left(\prod_{i\in B_{5}}p_{i}\right)\nonumber \\
 & \qquad\qquad+\left(\prod_{i\in B_{1}}p_{i}\right)\tau\left(\prod_{i\in B_{2}}p_{i}\right)\left(\prod_{i\in B_{3}}p_{i}\right)\tau\left(\prod_{i\in B_{4}}p_{i}\right)\left(\barp\prod_{i\in B_{5}}p_{i}\right),\label{eq:rotriffle-onenegativeprimitive-1}
\end{align}
and similarly
\begin{align}
\oriftauplus(p_{1}\dots p_{k}\barp) & =\sum_{B}\left[\left(\prod_{i\in B_{1}}p_{i}\right)\barp\right]\tau\left(\prod_{i\in B_{2}}p_{i}\right)\left(\prod_{i\in B_{3}}p_{i}\right)\tau\left(\prod_{i\in B_{4}}p_{i}\right)\left(\prod_{i\in B_{5}}p_{i}\right)\nonumber \\
 & \qquad\qquad-\left(\prod_{i\in B_{1}}p_{i}\right)\left[\tau\left(\prod_{i\in B_{2}}p_{i}\right)\barp\right]\left(\prod_{i\in B_{3}}p_{i}\right)\tau\left(\prod_{i\in B_{4}}p_{i}\right)\left(\prod_{i\in B_{5}}p_{i}\right)\nonumber \\
 & \qquad\qquad+\left(\prod_{i\in B_{1}}p_{i}\right)\tau\left(\prod_{i\in B_{2}}p_{i}\right)\left[\left(\prod_{i\in B_{3}}p_{i}\right)\barp\right]\tau\left(\prod_{i\in B_{4}}p_{i}\right)\left(\prod_{i\in B_{5}}p_{i}\right)\nonumber \\
 & \qquad\qquad-\left(\prod_{i\in B_{1}}p_{i}\right)\tau\left(\prod_{i\in B_{2}}p_{i}\right)\left(\prod_{i\in B_{3}}p_{i}\right)\left[\tau\left(\prod_{i\in B_{4}}p_{i}\right)\barp\right]\left(\prod_{i\in B_{5}}p_{i}\right)\nonumber \\
 & \qquad\qquad+\left(\prod_{i\in B_{1}}p_{i}\right)\tau\left(\prod_{i\in B_{2}}p_{i}\right)\left(\prod_{i\in B_{3}}p_{i}\right)\tau\left(\prod_{i\in B_{4}}p_{i}\right)\left[\left(\prod_{i\in B_{5}}p_{i}\right)\barp\right].\label{eq:rotriffle-onenegativeprimitive-2}
\end{align}
Note that, for each fixed set-composition $B$ of $\{1,2,\dots,k\}$
into 5 parts, the second line of (\ref{eq:rotriffle-onenegativeprimitive-1})
is equal to the first line of (\ref{eq:rotriffle-onenegativeprimitive-2})
except for the opposite sign. So these will cancel in $\oriftauplus(\barp p_{1}\dots p_{k})+\oriftauplus(p_{1}\dots p_{k}\barp)$,
and similarly the third line of (\ref{eq:rotriffle-onenegativeprimitive-1})
and the second line of (\ref{eq:rotriffle-onenegativeprimitive-2})
will cancel, and the fourth line of (\ref{eq:rotriffle-onenegativeprimitive-1})
and the third line of (\ref{eq:rotriffle-onenegativeprimitive-2})
will cancel, and so on. Hence $\oriftauplus(\barp p_{1}\dots p_{k})+\oriftauplus(p_{1}\dots p_{k}\barp)$
is equal to the first line of (\ref{eq:rotriffle-onenegativeprimitive-1})
plus the last line of (\ref{eq:rotriffle-onenegativeprimitive-2})
- these are equal respectively to $\barp\oriftauplus(p_{1}\dots p_{k})$
and $\oriftauplus(p_{1}\dots p_{k})\barp$. The same cancellations
occur for other odd values of $a$.

As for $\oriftauminus$ (for $a$ odd):
\begin{align}
\oriftauminus(\barp p_{1}\dots p_{k}) & =\sum_{B}-\left[\barp\tau\left(\prod_{i\in B_{1}}p_{i}\right)\right]\left(\prod_{i\in B_{2}}p_{i}\right)\tau\left(\prod_{i\in B_{3}}p_{i}\right)\dots\tau\left(\prod_{i\in B_{a}}p_{i}\right)\nonumber \\
 & \qquad\qquad+\tau\left(\prod_{i\in B_{1}}p_{i}\right)\left(\barp\prod_{i\in B_{2}}p_{i}\right)\tau\left(\prod_{i\in B_{3}}p_{i}\right)\dots\tau\left(\prod_{i\in B_{a}}p_{i}\right)\nonumber \\
 & \qquad\qquad-\tau\left(\prod_{i\in B_{1}}p_{i}\right)\left(\prod_{i\in B_{2}}p_{i}\right)\left[\barp\tau\left(\prod_{i\in B_{3}}p_{i}\right)\right]\dots\tau\left(\prod_{i\in B_{a}}p_{i}\right)\nonumber \\
 & \qquad\qquad+\dots\nonumber \\
 & \qquad\qquad-\tau\left(\prod_{i\in B_{1}}p_{i}\right)\left(\prod_{i\in B_{2}}p_{i}\right)\tau\left(\prod_{i\in B_{3}}p_{i}\right)\dots\left[\barp\tau\left(\prod_{i\in B_{a}}p_{i}\right)\right],\label{eq:rotriffle-onenegativeprimitive-3}
\end{align}
and 
\begin{align}
\oriftauminus(p_{1}\dots p_{k}\barp) & =\sum_{B}-\left[\tau\left(\prod_{i\in B_{1}}p_{i}\right)\barp\right]\left(\prod_{i\in B_{2}}p_{i}\right)\tau\left(\prod_{i\in B_{3}}p_{i}\right)\dots\tau\left(\prod_{i\in B_{a}}p_{i}\right)\nonumber \\
 & \qquad\qquad+\tau\left(\prod_{i\in B_{1}}p_{i}\right)\left[\left(\prod_{i\in B_{2}}p_{i}\right)\barp\right]\tau\left(\prod_{i\in B_{3}}p_{i}\right)\dots\tau\left(\prod_{i\in B_{a}}p_{i}\right)\nonumber \\
 & \qquad\qquad-\tau\left(\prod_{i\in B_{1}}p_{i}\right)\left(\prod_{i\in B_{2}}p_{i}\right)\left[\tau\left(\prod_{i\in B_{3}}p_{i}\right)\barp\right]\dots\tau\left(\prod_{i\in B_{a}}p_{i}\right)\nonumber \\
 & \qquad\qquad+\dots\nonumber \\
 & \qquad\qquad-\tau\left(\prod_{i\in B_{1}}p_{i}\right)\left(\prod_{i\in B_{2}}p_{i}\right)\tau\left(\prod_{i\in B_{3}}p_{i}\right)\dots\left[\tau\left(\prod_{i\in B_{a}}p_{i}\right)\barp\right].\label{eq:rotriffle-onenegativeprimitive-4}
\end{align}
So, when (\ref{eq:rotriffle-onenegativeprimitive-3}) and (\ref{eq:rotriffle-onenegativeprimitive-4})
are summed, again the $i$th line of (\ref{eq:rotriffle-onenegativeprimitive-3})
cancels with the $i+1$th line of (\ref{eq:rotriffle-onenegativeprimitive-4}),
leaving the first line of (\ref{eq:rotriffle-onenegativeprimitive-3})
and the last line of (\ref{eq:rotriffle-onenegativeprimitive-4}).
\end{proof}
\textcolor{green}{With these sign-reversing involutions in place,
}we proceed to prove the eigenvector formulas.

\begin{proof}[Proof of Theorem \ref{thm:flipriffle-evector}]

\begin{enumerate}
\item First note that all lines in (\ref{eq:flipriffle-onenegatingprimitive-evector})
are linear in $s$, i.e. Lemma \ref{lem:flipriffle-onenegatingprimitive-evector}
holds when $s$ is a linear combination of products of primitives.

\begin{enumerate}
\item The $\bark=0$ case is Lemma \ref{lem:symmetrisedproduct-evector}.
For $\bark>0$, use the third and fourth lines of (\ref{eq:flipriffle-onenegatingprimitive-evector}),
with $s=\left(\sum_{\sigma\in\sk}p_{\sigma(1)}\dots p_{\sigma(k)}\right)\barp_{1}\dots\barp_{\bark-1}$
and $s=\barp_{2}\dots\barp_{\bark}\left(\sum_{\sigma\in\sk}p_{\sigma(1)}\dots p_{\sigma(k)}\right)$
respectively.
\item Proceed by induction on $\bark$, the base case $\bark=0$ being Lemma
\ref{lem:symmetrisedproduct-evector}. By the first line of (\ref{eq:flipriffle-onenegatingprimitive-evector}):
\begin{align*}
\oriftautildeplus\left(\barp_{1}\dots\barp_{\bark}\sum_{\sigma\in\sk}p_{\sigma(1)}\dots p_{\sigma(k)}\right) & =\barp_{1}\oriftautildeplus\left(\barp_{2}\dots\barp_{\bark}\sum_{\sigma\in\sk}p_{\sigma(1)}\dots p_{\sigma(k)}\right)\\
 & =\barp_{1}a^{k}\barp_{2}\dots\barp_{\bark}\sum_{\sigma\in\sk}p_{\sigma(1)}\dots p_{\sigma(k)};
\end{align*}
and 
\begin{align*}
\oriftautildeplus\left[\left(\sum_{\sigma\in\sk}p_{\sigma(1)}\dots p_{\sigma(k)}\right)\barp_{1}\dots\barp_{\bark}\right] & =\oriftautildeplus\left[\left(\sum_{\sigma\in\sk}p_{\sigma(1)}\dots p_{\sigma(k)}\right)\barp_{1}\dots\barp_{\bark-1}\right]\barp_{\bark}\\
 & =a^{k}\left[\left(\sum_{\sigma\in\sk}p_{\sigma(1)}\dots p_{\sigma(k)}\right)\barp_{1}\dots\barp_{\bark-1}\right]\barp_{\bark}
\end{align*}
using the inductive hypothesis at the second equality each time.
\item Repeatedly applying the second line of (\ref{eq:flipriffle-onenegatingprimitive-evector}):
\begin{align*}
\oriftautildeminus\left(\barp_{1}\barp_{2}\dots\barp_{\bark}\sum_{\sigma\in\sk}p_{\sigma(1)}\dots p_{\sigma(k)}\right) & =-\oriftautildeminus\left(\barp_{2}\dots\barp_{\bark}\sum_{\sigma\in\sk}p_{\sigma(1)}\dots p_{\sigma(k)}\right)\barp_{1}\\
 & =\oriftautildeminus\left(\barp_{3}\dots\barp_{\bark}\sum_{\sigma\in\sk}p_{\sigma(1)}\dots p_{\sigma(k)}\right)\barp_{2}\barp_{1}\\
 & \vdots\\
 & =(-1)^{\bark}\oriftautildeminus\left(\sum_{\sigma\in\sk}p_{\sigma(1)}\dots p_{\sigma(k)}\right)\barp_{k}\dots\barp_{2}\barp_{1}\\
 & =(-1)^{\bark}a^{k}\left(\sum_{\sigma\in\sk}p_{\sigma(1)}\dots p_{\sigma(k)}\right)\barp_{k}\dots\barp_{2}\barp_{1},
\end{align*}
and by the same recursive process 
\[
\oriftautildeminus\left[\left(\sum_{\sigma\in\sk}p_{\sigma(1)}\dots p_{\sigma(k)}\right)\barp_{k}\dots\barp_{2}\barp_{1}\right]=(-1)^{\bark}a^{k}\barp_{1}\barp_{2}\dots\barp_{\bark}\left(\sum_{\sigma\in\sk}p_{\sigma(1)}\dots p_{\sigma(k)}\right).
\]
Summing these gives the required eigenvector.
\end{enumerate}
\item Work in the subalgebra of $\calh$ that is the universal enveloping
algebra of $Prim(\calh)$. As in the Triangularity Lemma (\ref{lem:trilemma}),
consider its PBW basis formed from the basis $\calp\cup\barcalp$
of $Prim(\calh)$, with the concatentation order on $\calp\cup\barcalp$.
Then, for each \textcolor{green}{fixed format} in part i of the Theorem,
the eigenvector made from $p_{1},\dots,p_{k}\in\calp$ and $\barp_{1},\dots,\barp_{\bark}\in\barcalp$
is a sum of their products in different orders, and hence its highest
length term in this PBW basis is $p_{1}\dots p_{k}\barp_{1}\dots\barp_{\bark}$
(its coefficient is the \textcolor{green}{number of product terms}
in the eigenvector, and is hence non-zero). Thus each eigenvector has
a different highest length term, and so they are linearly independent.
\item As explained above, each set of eigenvectors is triangular with respect
to the PBW basis of the universal enveloping algebra of $Prim(\calh)$,
thus giving a basis of this universal enveloping algebra. By the Cartier-Milnor-Moore
theorem, when $\calh$ is cocommutative, this universal enveloping
algebra is precisely $\calh$.
\end{enumerate}
\end{proof}

\begin{proof}[Proof of Theorem \ref{thm:rotriffleodd-evector}]

\begin{enumerate}
\item Proceed by induction on $\bark$, the base case of $\bark=0$ being
Lemma \ref{lem:symmetrisedproduct-evector}. As noted in the previous
proof, all lines in (\ref{eq:flipriffle-onenegatingprimitive-evector})
are linear in $s$, so we may apply Lemma \ref{lem:flipriffle-onenegatingprimitive-evector}
when $s$ is the eigenvector in (\ref{eq:rotriffle-evector}):
\begin{align*}
\oriftauplus(\barp s+s\barp) & =\barp\oriftauplus(s)+\oriftauplus(s)\barp=\barp(a^{k}s)+(a^{k}s)\barp=a^{k}(\barp s+s\barp);\\
\oriftauminus(\barp s+s\barp) & =-\barp\oriftauminus(s)-\oriftauminus(s)\barp=-\barp(-1)^{\bark}a^{k}s-(-1)^{\bark}a^{k}s\barp\\
 & \phantom{=-\barp\oriftauminus(s)-\oriftauminus(s)\barp}=(-1)^{\bark+1}a^{k}(\barp s+s\barp).
\end{align*}
So $\barp s+s\barp$ is an eigenvector with the required eigenvalue.
It remains to show that, when $s$ is of the form (\ref{eq:rotriffle-evector}),
then so is $\barp s+s\barp$, with $\bark$ increased by 1. To do
so, let $\barp=\barp_{\bark+1}$. Then, for each set-composition $B$
of $\{1,2,\dots,\bark\}$ into 2 blocks, 
\begin{align*}
 & \barp_{\bark+1}\left(\overleftarrow{\prod_{i\in B_{1}}}\barp_{i}\left(\sum_{\sigma\in\sk}p_{\sigma(1)}\dots p_{\sigma(k)}\right)\prod_{i\in B_{2}}\barp_{i}\right)+\left(\overleftarrow{\prod_{i\in B_{1}}}\barp_{i}\left(\sum_{\sigma\in\sk}p_{\sigma(1)}\dots p_{\sigma(k)}\right)\prod_{i\in B_{2}}\barp_{i}\right)\barp_{\bark+1}\\
= & \overleftarrow{\prod_{i\in B'_{1}}}\barp_{i}\left(\sum_{\sigma\in\sk}p_{\sigma(1)}\dots p_{\sigma(k)}\right)\prod_{i\in B'_{2}}\barp_{i}+\overleftarrow{\prod_{i\in B''_{1}}}\barp_{i}\left(\sum_{\sigma\in\sk}p_{\sigma(1)}\dots p_{\sigma(k)}\right)\prod_{i\in B''_{2}}\barp_{i},
\end{align*}
where $B'$ and $B''$ are set-compositions of $\{1,2,\dots,\bark+1\}$,
obtained from $B$ respectively by adding $\bark+1$ to $B_{1}$ or
to $B_{2}$. And all set-compositions of $\{1,2,\dots,\bark+1\}$ into 2 blocks
arise from a unique such $B$ in this way.
\item[ii, iii.] The same argument as in the proof of Theorem \ref{thm:flipriffle-evector}
above.
\end{enumerate}
\end{proof}

\section{Markov Chains from Hyperoctahedral Descent Operators\label{sec:chain}}

One application of the eigenvalues and eigenvectors of hyperoctahedral
descent operators calculated in previous sections is to study an associated
Markov chain, generalising the type A framework in \cite{hopfpowerchains,descentoperatorchains}.
Each positive hyperoctahedral descent operator, applied to each basis
of a Hopf algebra, determines a different Markov chain. We will be
minimal here, and refer the reader to \cite{descentoperatorchains}
and \cite{markovmixing} for more background on Markov chains.

\subsection{Chain Construction\label{subsec:Chain-Construction}}

Given a finite set $\calb$ and a matrix $K$ with rows and columns
labelled by elements of $\calb$, the \emph{Markov chain} with \emph{state
space} $\Omega$ and \emph{transition matrix} $K$ is a sequence of
random variables $X_{1},X_{2},\dots$ taking values in $\calb$,
such that the conditional probability
\[
\Prob(X_{t}=y|X_{t-1}=x,X_{t-2}=x_{t-2},\dots,X_{1}=x_{1})=\Prob(X_{t}=y|X_{t-1}=x)=K(x,y).
\]
In other words, $X_{t}$, the state at time $t$, is only dependent
on the state one timestep prior, not on further past history.

We would like this transition matrix $K$ to be (the transpose of)
the matrix of a hyperoctahedral descent operator, acting on a graded
Hopf algebra $\calh$, relative to some fixed basis $\calb$ (up to
scaling). To be probabilities, the matrix entries must be non-negative;
this motivates condition iv below, that the product and coproduct
of basis elements expand positively in $\calb$. Condition v, the
positivity of $\eta$, is required to scale the matrix so its rows
sum to 1, as probabilities should. Also, in condition ii, we require
the involutive Hopf ambimorphism $\tau$ to send the basis $\calb$
to itself (as opposed to to a linear combination of basis elements),
so that it can be interpreted as an involution on the combinatorial
objects indexing $\calb$, analogous to flipping a deck of cards.

\begin{setup}\label{setup:chain}The following conditions and notations
will be assumed when analysing Markov chains driven by hyperoctahedral
descent operators:
\begin{enumerate}
\item $\calh=\bigoplus_{n\geq0}\calhn$ is a graded connected Hopf algebra
over $\mathbb{R}$, and each $\calhn$ is finite-dimensional, with
basis $\calb_{n}$. And $\calb$ denotes $\amalg_{n\geq0}\calbn$.
\item $\tau:\calb\rightarrow\calb$ is a graded involution that extends
linearly to a Hopf ambimorphism on $\calh$.
\item $n$ is a fixed integer, and $P$ is a probability distribution on
signed weak-compositions of $n$ or on tilde-signed weak-compositions
of $n$ (but not a mixture of both). Define 
\begin{equation}
m\circ\Delta_{P(\tau)}:=\sum_{D}\frac{P(D)}{\binom{n}{D^{+}}}m\circ\Delta_{D(\tau)}\label{eq:mdeltaP}
\end{equation}
where $\binom{n}{D^{+}}$ is the multinomial coefficient $\binom{n}{|d_{1}|\:\dots\:|d_{l(D)}|}$.
\item For each signed or tilde-signed weak-composition $D$ with non-zero
probability under $P$, \textcolor{green}{and for all $x,y\in\calbn$,
all $z_{1}\in\calb_{|d_{1}|},z_{2}\in\calb_{|d_{2}|},\dots,z_{l(D)}\in\calb_{|d_{l(D)}|}$}:
\begin{enumerate}
\item $z_{1}z_{2}\dots z_{l(D)}=\sum_{y\in\calb_{n}}\xi_{z_{1},\dots,z_{l(D)}}^{y}y$
with $\xi_{z_{1},\dots,z_{l(D)}}^{y}\geq0$;
\item $\Delta_{D^{+}}(x)=\sum_{z_{i}\in\calb_{|d_{i}|}}\eta_{x}^{z_{1},\dots,z_{l(D)}}z_{1}\otimes z_{2}\otimes\dots\otimes z_{l(D)}$
with $\eta_{x}^{z_{1},\dots,z_{l(D)}}\geq0$.
\end{enumerate}
\item For all $x\in\calb_{n}$, the function 
\[
\eta(x):=\mbox{sum of coefficients (in the }\calb_{1}\otimes\dots\otimes\calb_{1}\mbox{ basis) of }\Delta_{1,\dots,1}(x)
\]
evaluates to a positive number.
\end{enumerate}
\end{setup}
\begin{thm}
\textcolor{green}{\label{thm:chain-3step} }Under Setup \ref{setup:chain},
the matrix
\begin{equation}
K(x,y):=\frac{\eta(y)}{\eta(x)}\mbox{ coefficient of }y\mbox{ in }m\circ\Delta_{P(\tau)}(x)\label{eq:transitionmtx}
\end{equation}
is a transition matrix, and each step of the associated Markov chain,
starting at $x\in\calbn$, is equivalent to the following four-step
process:

\begin{enumerate}[label=\arabic*.]
\item Choose a signed or tilde-signed weak-composition $D$ according to
the distribution $P$.
\item Choose $z_{1}\in\calb_{|d_{1}|},z_{2}\in\calb_{|d_{2}|},\dots,z_{l(D)}\in\calb_{|d_{l(D)}|}$
with probability $\frac{1}{\eta(x)}\eta_{x}^{z_{1},\dots,z_{l(D)}}\eta(z_{1})\dots\eta(z_{l(D)})$.
\item For all $i$ such that $d_{i}$ is a decorated part, replace $z_{i}$
by $\tau(z_{i})$.
\item Choose $y\in\calbn$ with probability $\left(\binom{n}{D}\eta(z_{1})\dots\eta(z_{l})\right)^{-1}\xi_{z'_{1},\dots,z'_{l}}^{y}\eta(y)$,
where $z_{i}'=z_{i}$ if $d_{i}$ is an undecorated part, and $z_{i}'=\tau(z_{i})$
if $d_{i}$ is decorated.
\end{enumerate}
\end{thm}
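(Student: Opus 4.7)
The plan is to follow the type A argument in \cite{descentoperatorchains}, accounting for the insertion of $\tau$ by showing that it is innocuous at the level of $\eta$. The proof rests on three structural identities for $\eta$.

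First, the \emph{multinomial product identity}: for any $z_1,\dots,z_l \in \calb$ with $\deg z_i = |d_i|$ summing to $n$,
\[
\sum_y \xi^y_{z_1,\dots,z_l}\,\eta(y) \;=\; \binom{n}{|d_1|,\dots,|d_l|}\,\eta(z_1)\cdots\eta(z_l).
\]
This follows from the Hopf compatibility between $m$ and $\Delta_{1,\dots,1}$: splitting the product $z_1\cdots z_l$ into singletons is the same as splitting each factor into singletons and shuffling the results, and the number of shuffles is the multinomial coefficient. Second, the \emph{coassociativity identity}: for any $x\in\calbn$ and any weak-composition $D^+$ of $n$,
\[
\sum_{z_i\in\calb_{|d_i|}} \eta_x^{z_1,\dots,z_l}\,\eta(z_1)\cdots\eta(z_l) \;=\; \eta(x),
\]
which follows from $(\Delta_{1,\dots,1})^{\otimes l(D)} \circ \Delta_{D^+} = \Delta_{1,\dots,1}$. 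Third, the \emph{$\tau$-invariance of $\eta$}: $\eta(\tau(z)) = \eta(z)$ for every $z\in\calbn$. Since $\tau$ is a graded bijection of $\calb$ by condition (ii) of Setup \ref{setup:chain} and is either a coalgebra morphism or antimorphism, $\Delta_{1,\dots,1}(\tau(z))$ equals $\tau^{\otimes n}(\Delta_{1,\dots,1}(z))$ up to a permutation of tensor factors, and neither operation alters the sum of coefficients in the $\calb_1^{\otimes n}$ basis.

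With these in hand, I would unpack $K(x,y)$ directly. Expanding $m\circ\Delta_{P(\tau)}(x)$ using conditions (iv)(a), (iv)(b) of the setup and equation~\eqref{eq:mdeltaP}, the coefficient of $y$ is $\sum_D \tfrac{P(D)}{\binom{n}{D^+}} \sum_{z_i} \eta_x^{z_1,\dots,z_l}\,\xi^y_{z_1',\dots,z_l'}$, where $z_i' = \tau(z_i)$ if $d_i$ is decorated and $z_i'=z_i$ otherwise. Multiplying by $\eta(y)/\eta(x)$ and inserting the identity factor $\eta(z_1)\cdots\eta(z_l)/\eta(z_1)\cdots\eta(z_l)$ yields
\[
K(x,y) \;=\; \sum_D P(D) \sum_{z_1,\dots,z_l}\; \frac{\eta_x^{z_1,\dots,z_l}\,\eta(z_1)\cdots\eta(z_l)}{\eta(x)} \,\cdot\, \frac{\xi^y_{z_1',\dots,z_l'}\,\eta(y)}{\binom{n}{D^+}\,\eta(z_1)\cdots\eta(z_l)},
\]
which is exactly the product of probabilities for Steps 1, 2, and 4 of the claimed process, with Step 3 silently performed by replacing each $z_i$ by $z_i'$. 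Non-negativity of every factor is immediate from Setup~\ref{setup:chain}(iv)--(v). Summing over $y$ and invoking the multinomial product identity together with $\tau$-invariance makes the Step 4 factors sum to $1$; summing over $(z_i)$ and invoking the coassociativity identity makes the Step 2 factors sum to $1$; summing over $D$ gives $1$ since $P$ is a probability distribution. Hence $\sum_y K(x,y)=1$, so $K$ is stochastic, and the chain it drives is precisely the described four-step process.

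The main obstacle will be the $\tau$-invariance of $\eta$ in the coalgebra-antimorphism case, where $\Delta_{1,\dots,1}(\tau(z))$ is a genuine reversal of $\tau^{\otimes n}(\Delta_{1,\dots,1}(z))$; fortunately the sum of coefficients is a symmetric functional, so the reversal is harmless, and similarly a possible permutation of tensor factors is harmless. Beyond this, the proof is essentially bookkeeping that parallels the type A case in \cite{descentoperatorchains}.
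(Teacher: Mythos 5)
Your proposal is correct, but it takes a more self-contained, ``primal'' route than the paper. The paper reduces the stochasticity claim to showing that $\eta$, viewed as the element $(\bullet^{*})^{n}$ of the graded dual $\calhdual$, is fixed by the dual operator $m\circ\Delta_{P(\tau^{*})}$: since $\bullet^{*}$ is primitive, $\Delta_{D^{+}}((\bullet^{*})^{n})=\binom{n}{D^{+}}(\bullet^{*})^{d_{1}}\otimes\dots\otimes(\bullet^{*})^{d_{l}}$, and $\tau^{*}$ fixes $\bullet^{*}$ and hence each power; the sufficiency of this eigenvector condition and the four-step description are then imported from the type A results of \cite{descentoperatorchains}, with the only new observation being $\eta(\tau(z))=\eta(z)$, which the paper also obtains dually from $\tau^{*}(\bullet^{*})^{n}=(\bullet^{*})^{n}$. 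You instead verify everything directly in $\calh$: your ``multinomial product identity'' and ``coassociativity identity'' are precisely the primal translations of the two halves of the paper's dual computation (the first is the statement $\eta(z_{1}\dots z_{l})=\binom{n}{D^{+}}\eta(z_{1})\dots\eta(z_{l})$, the second is $\sum_{z_{i}}\eta_{x}^{z_{1},\dots,z_{l}}\eta(z_{1})\dots\eta(z_{l})=\eta(x)$), and you prove $\eta\circ\tau=\eta$ directly from the coalgebra (anti)morphism property together with $\tau(\calb_{1})=\calb_{1}$, correctly noting that the possible reversal of tensor factors is harmless for a sum of coefficients. Your factorisation of $K(x,y)$ into the Step 1, 2 and 4 probabilities, and the verification that each step's weights are non-negative and sum to $1$, is exactly what establishes both stochasticity and the four-step equivalence, so the argument is complete. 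What the paper's dual formulation buys is brevity and the ability to lean on the general Doob-transform-style result \cite[Th. 2.3]{descentoperatorchains}; what your version buys is independence from those citations and an explicit display of where each hypothesis of Setup \ref{setup:chain} is used.
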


We remark that (\ref{eq:transitionmtx}) means $K$ is the transpose
of the matrix for $m\circ\Delta_{P(\tau)}$, relative to the basis
$\left\{ \frac{x}{\eta(x)}|x\in\calb_{n}\right\} $.
\begin{proof}
We follow the proof of \cite[Lem. 3.3]{descentoperatorchains}. According
to \cite[Th. 2.3]{descentoperatorchains}, to show $K$ is a transition
matrix, it suffices to show that $\eta:\calb_{n}\rightarrow\mathbb{R}$,
extended linearly to $\calh_{n}$, is an eigenvector of $\left(m\circ\Delta_{P(\tau)}\right)^{*}:\calhdual\rightarrow\calhdual$
of eigenvalue 1. Recall from Section \ref{subsec:hdos} that $\left(m\circ\Delta{}_{P(\tau)}\right)^{*}=m\circ\Delta{}_{P(\tau^{*})}$,
where $\tau^{*}$ is an involution on the dual basis $\calbdual$
of $\calhdual$. 

As in \cite[Lem. 3.3]{descentoperatorchains}, let $\bullet^{*}\in\calhdual_{1}$
denote the linear map on $\calh_{1}$ taking value $1$ on each element
of $\calb_{1}$. When restricted to $\calhn$, $\eta=(\bullet^{*})^{n}\in\calhdual_{n}$.
Since $\bullet^{*}$ is of degree 1, it is necessarily primitive,
so $\Delta_{D^{+}}((\bullet^{*})^{n})=\binom{n}{D^{+}}\left(\bullet^{*}\right)^{d_{1}}\otimes\dots\otimes\left(\bullet^{*}\right)^{d_{l}}$.
Now $\bullet^{*}$ is the sum of all elements in $\calbdual_{1}$,
and $\tau^{*}$ is an involution on $\calbdual_{1}$, so $\tau^{*}$
fixes $\bullet^{*}$. As a Hopf ambimorphism, $\tau^{*}$ also fixes
each $\left(\bullet^{*}\right)^{d_{i}}$, so $\Delta_{D}((\bullet^{*})^{n})=\Delta_{D^{+}}((\bullet^{*})^{n})=\binom{n}{D^{+}}\left(\bullet^{*}\right)^{d_{1}}\otimes\dots\otimes\left(\bullet^{*}\right)^{d_{l}}$,
and taking linear combination and then taking the product shows $m\circ\Delta_{P(\tau^{*})}((\bullet^{*})^{n})=((\bullet^{*})^{n})$,
as required.

The proof of the 4-step description of the chain is essentially the
same as that of its type A version \cite[Th. 3.4]{descentoperatorchains},
once we note that $\eta(\tau(z_{i}))=(\bullet^{*})^{n}(\tau(z_{i}))=\left[\tau^{*}(\bullet^{*})^{n}\right](z_{i})=(\bullet^{*})^{n}(z_{i})=\eta(z_{i})$,
since $\tau^{*}$ fixes $(\bullet^{*})^{n}$.
\end{proof}

\subsection{Stationary Distribution\label{subsec:Stationary-Distribution}}

One basic question when studying a Markov chain is to find its \emph{stationary
distributions}, which are functions $\pi:\calb\rightarrow\mathbb{R}$
satisfying 
\begin{equation}
\sum_{x\in\calb}\pi(x)K(x,y)=\pi(y).\label{eq:stationarydistributiondef}
\end{equation}
These are of interest as they include all possible limiting distributions:
if $\Prob(X_{t}=x)$ has a limit as $t\rightarrow\infty$, then this
limit must be $\pi(x)$ for some stationary $\pi$.

To describe the stationary distributions of chains defined in (\ref{eq:transitionmtx}),
let $\calb_{1}^{\tau}$ denote the subset of $\calb_{1}$ that is
fixed under $\tau$, and\textcolor{green}{{} $\calb_{1}^{-}$} the set
of averages over each size 2 $\tau$-orbit in $\calb_{1}$. (For example,
in the signed shuffle algebra with $\tau$ defined as in (\ref{eq:tau-shufflealg}),
$\calb_{1}^{-}$ would contain $\frac{1}{2}(1+\bar{1}),\frac{1}{2}(2+\bar{2}),$etc.,
and $\calb_{1}^{\tau}$ is empty.) Note that $\calb_{1}^{-}$ is also
fixed under $\tau$; indeed, a basis for $\calh_{1}^{\tau}$ is $\calb_{1}^{\tau}\cup\calb_{1}^{-}$.
Given any multiset $\{c_{1},\dots,c_{n}\}$ in $\calb_{1}^{\tau}\cup\calb_{1}^{-}$,
\textcolor{blue}{define $\pi_{c_{1},\dots,c_{n}}:\calbn\rightarrow\mathbb{R}$
by 
\[
\pi_{c_{1},\dots,c_{n}}(x):=\frac{\eta(x)}{n!^{2}}\sum_{\sigma\in\sn}\mbox{coefficient of }x\mbox{ in the product }c_{\sigma(1)}\dots c_{\sigma(n)}.
\]
Note that, if }$\calb_{1}$ contains a sole element, denoted $\bullet$,
then necessarily $\calb_{1}^{\tau}=\{\bullet\}$, $\calb_{1}^{-}=\emptyset$,
so \textcolor{blue}{the only possible such function }simplifies to\textcolor{blue}{
\[
\pi(x):=\frac{\eta(x)}{n!}\xi_{\bullet,\dots,\bullet}^{x}.
\]
}
\begin{thm}
\label{thm:stationary} Assume the conditions in Setup \ref{setup:chain},
and additionally that $P$ is non-zero on some $D$ with at least
two non-zero parts, and on some $D$ with at least one decorated part.
Then any stationary distribution for the Markov chain defined in Equation
(\ref{eq:transitionmtx}) is a unique linear combination of the $\pi_{c_{1},\dots,c_{n}}$,
over all multisets $\{c_{1},\dots,c_{n}\}$ in $\calb_{1}^{\tau}\cup\calb_{1}^{-}$.
Furthermore, $\sum_{x\in\calbn}\pi_{c_{1},\dots,c_{n}}(x)=1$. In
particular, if $\calb_{1}=\left\{ \bullet\right\} $, then $\pi$
is the unique stationary distribution.
\end{thm}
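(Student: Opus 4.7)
The plan is to translate stationarity into an eigenvector problem for $m\circ\Delta_{P(\tau)}$ on $\calhn$. Unwinding (\ref{eq:stationarydistributiondef}) against (\ref{eq:transitionmtx}) shows that $\pi:\calbn\to\mathbb{R}$ is stationary if and only if $u_\pi:=\sum_{x\in\calbn}(\pi(x)/\eta(x))\,x\in\calhn$ is fixed by $m\circ\Delta_{P(\tau)}$, so $\pi\mapsto u_\pi$ is a linear bijection from stationary vectors onto the fixed-point subspace $V_1\subseteq\calhn$. It then suffices to exhibit the elements
\[
u_{c_1,\dots,c_n}:=\frac{1}{(n!)^2}\sum_{\sigma\in\sn}c_{\sigma(1)}\cdots c_{\sigma(n)},\qquad \{c_1,\dots,c_n\}\subseteq\calb_1^\tau\cup\calb_1^-,
\]
as a basis of $V_1$, since $u_{c_1,\dots,c_n}$ is precisely $u_\pi$ for $\pi=\pi_{c_1,\dots,c_n}$.

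That $u_{c_1,\dots,c_n}\in V_1$ is immediate: each $c_i$ is a $\tau$-invariant degree-$1$ primitive, so expanding as in the proof of Lemma \ref{lem:symmetrisedproduct-evector} — applied to the single operator $m\circ\Delta_{D(\tau)}$ rather than $\orif\tau_a^\pm$ — gives $m\circ\Delta_{D(\tau)}(u_{c_1,\dots,c_n})=\binom{n}{D^+}u_{c_1,\dots,c_n}$, and summing against (\ref{eq:mdeltaP}) yields eigenvalue $\sum_D P(D)=1$. The normalisation $\sum_x\pi_{c_1,\dots,c_n}(x)=1$ follows from the primitivity of $\bullet^*\in\calhdual_1$ (used in the proof of Theorem \ref{thm:chain-3step}), which yields $\eta(yz)=\binom{\deg y+\deg z}{\deg y}\eta(y)\eta(z)$; combined with $\eta(c_i)=1$ for each $c_i\in\calb_1^\tau\cup\calb_1^-$, iterating gives $\eta(c_1\cdots c_n)=n!$, and hence the row-sum is $\tfrac{1}{(n!)^2}\sum_\sigma\eta(c_{\sigma(1)}\cdots c_{\sigma(n)})=1$. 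Linear independence follows by applying $\Delta_{1,\dots,1}$: for degree-$1$ primitives, iterated coassociativity gives $\Delta_{1,\dots,1}(u_{c_1,\dots,c_n})=\tfrac{1}{n!}\sum_{\sigma\in\sn}c_{\sigma(1)}\otimes\cdots\otimes c_{\sigma(n)}$, a nonzero multiple of the symmetric tensor indexed by the multiset, and such symmetric tensors over distinct multisets are linearly independent in $\calh_1^{\otimes n}$ because $\calb_1^\tau\cup\calb_1^-$ is linearly independent in $\calh_1$.

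The main obstacle is to show the $u_{c_1,\dots,c_n}$ exhaust $V_1$. By Theorem \ref{thm:evalues}.ii, the $((1^n),\emptyset)$-eigenspace already has dimension $\binom{b_1+n-1}{n}$, matching the number of multisets, so it remains to rule out every other $(\lambda,\barlambda)$ as an eigenvalue-$1$ contributor. If some part of $\lambda$ or $\barlambda$ exceeds $1$, I would use a direct double-count to show $\beta^D_{\lambda,\barlambda}/\binom{n}{D^+}<1$ on any $D$ with at least two nonzero parts — the key inequality reducing to $\sum_i|d_i|(|d_i|-1)/(n(n-1))<1$ by convexity — and then invoke the first hypothesis on $P$ plus convexity of the convex combination to push the eigenvalue below $1$. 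When $\lambda=(1^k)$ and $\barlambda=(1^{n-k})$ with $k<n$, I would exhibit a set-composition of sign $-1$ on a $D$ in the support of $P$ with a decorated part, by placing $\bar 1$ in a decorated block and redistributing the remaining signed elements via a range argument on the number of signed items in decorated blocks, forcing $\beta^D_{\lambda,\barlambda}<\binom{n}{D^+}$ strictly. The delicate case analysis — in particular the interplay between the two hypotheses on $P$ needed to handle configurations where $D$ has all parts decorated or no unsigned items — is the technical heart of the argument; the special case $\calb_1=\{\bullet\}$ then follows since only one multiset exists.
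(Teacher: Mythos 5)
Your overall architecture matches the paper's: translate stationarity into the eigenvalue-$1$ eigenspace of $m\circ\Delta_{P(\tau)}$, check that the symmetrised products of the $c_{i}$ are eigenvectors of eigenvalue $\binom{n}{D^{+}}$ for each $m\circ\Delta_{D(\tau)}$ and hence of eigenvalue $1$ after averaging against $P$, and then match the number of these (linearly independent) eigenvectors against the multiplicity attached to $\lambda=(1,\dots,1),\barlambda=\emptyset$ in Theorem \ref{thm:evalues}.ii, so that it only remains to rule out every other double-partition as an eigenvalue-$1$ contributor. Your normalisation argument via $\eta(yz)=\binom{\deg y+\deg z}{\deg y}\eta(y)\eta(z)$ (from primitivity of $\bullet^{*}$) and your linear-independence argument via $\Delta_{1,\dots,1}$ and symmetric tensors are both correct and are pleasant, more elementary alternatives to the paper's use of the identity (\ref{eq:sumetaxi}) and of PBW straightening.

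The genuine gap is the ruling-out step, which you only sketch, and the sketch fails exactly in the configuration you flag. For $\lambda=(1^{k}),\barlambda=(1^{n-k})$ with $k<n$, your plan is to produce, for some $D$ in the support of $P$ with a decorated part, a compatible set-composition of sign $-1$ by placing $\bar{1}$ in a decorated block. But if every nonzero part of that $D$ is decorated, then every compatible set-composition puts all $n-k$ signed integers into decorated blocks, so every sign is $(-1)^{n-k}$; when $n-k$ is even there is no negative set-composition at all and $\beta^{D}_{\lambda,\barlambda}=\binom{n}{D^{+}}$ exactly, so no strict inequality can be extracted from that $D$. The strictness then has to be obtained from the interplay with the other hypothesis (the $D$ with at least two nonzero parts) through an equality analysis of the chain $\beta^{D}_{\lambda,\barlambda}\le\beta^{D}_{\lambda\cup\barlambda,\emptyset}\le\binom{n}{D^{+}}$: this is precisely claim (i) in the paper's proof, which leans on the type A equality characterisation from \cite{descentoperatorchains} and on determining when the signed and unsigned counts coincide (and this configuration is delicate even for the paper's stated equality criterion). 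Deferring this as ``the technical heart'' leaves the spanning assertion, and hence the theorem, unproved. A smaller point: your convexity bound for partitions with a part of size $\ge 2$ only gives strictness on the $D$'s with two nonzero parts, so you must also note that every other $D$ in the support contributes at most $1$ after dividing by $\binom{n}{D^{+}}$ (true, since $|\beta^{D}_{\lambda,\barlambda}|\le\beta^{D}_{\lambda\cup\barlambda,\emptyset}\le\binom{n}{D^{+}}$), before the convex-combination argument closes that case.
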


\begin{proof}
Recall that $K$ is the transpose of the matrix for $m\circ\Delta_{P(\tau)}$,
relative to the basis $\left\{ \frac{x}{\eta(x)}|x\in\calb_{n}\right\} $.
Thus the condition of (\ref{eq:stationarydistributiondef}) translates
to $\frac{1}{n!^{2}}\sum_{\sigma\in\sn}c_{\sigma(1)}\dots c_{\sigma(n)}$
being an eigenvector of eigenvalue 1 for $m\circ\Delta_{P(\tau)}$.
To check this, recall $c_{i}\in\calb_{1}^{\tau}\cup\calb_{1}^{-}\subseteq Prim(H)^{\tau}$,
so by (\ref{eq:descent-operator-on-product-of-primitives})
\[
m\circ\Delta_{D(\tau)}(c_{\sigma(1)}\dots c_{\sigma(n)})=\sum_{B_{1},\dots,B_{l(D)}}\left(\prod_{i\in B_{1}}c_{\sigma(i)}\right)\dots\left(\prod_{i\in B_{l(D)}}c_{\sigma(i)}\right),
\]
summing over all set-compositions $B$ compatible with $\lambda=(1,\dots,1),\barlambda=\emptyset,D$.
Each summand on the right is a product of $c_{\sigma(1)},\dots,c_{\sigma(n)}$
in some order, and there are $\binom{n}{D^{+}}$ terms, so when symmetrised,
$\sum_{\sigma\in\sn}c_{\sigma(1)}\dots c_{\sigma(n)}$ is an eigenvector
of eigenvalue $\binom{n}{D^{+}}$ for $m\circ\Delta_{D(\tau)}$. Taking
the linear combination of hyperoctahedral descent operators as in
(\ref{eq:mdeltaP}) gives the result.

The Theorem further claims that these $\frac{1}{n!^{2}}\sum_{\sigma\in\sn}c_{\sigma(1)}\dots c_{\sigma(n)}$
give a basis of the eigenspace of eigenvalue 1 for $m\circ\Delta_{P(\tau)}$,
as $\{c_{1},\dots,c_{n}\}$ ranges over all multisets in $\calb_{1}^{\tau}\cup\calb_{1}^{-}$.
To see the linear independence, note that, if the basis $\calp$ of
$(Prim\calh)^{\tau}$ contains $\calb_{1}^{\tau}\cup\calb_{1}^{-}$,
then the PBW basis constructed in and before Notation \ref{notn:pbwbasis}
contains $c_{1}\dots c_{n}$. Hence, after PBW straightening, the
highest length term in $\frac{1}{n!}\sum_{\sigma\in\sn}c_{\sigma(1)}\dots c_{\sigma(n)}$
is $c_{1}\dots c_{n}$, a different term for each choice of multiset
$\{c_{1},\dots,c_{n}\}$ in $\calb_{1}^{\tau}\cup\calb_{1}^{-}$,
showing linear independence. To see that these elements span the eigenspace
of eigenvalue 1, we separate into two claims as in the type A case
\cite[Th. 3.12]{descentoperatorchains}:
\begin{enumerate}
\item the only double-partition with $\sum_{D}\frac{P(D)}{\binom{n}{D^{+}}}\beta_{\lambda,\barlambda}^{D}=1$
is $\lambda=(1,\dots,1),\barlambda=\emptyset$ --- in the proof of
\cite[Th. 3.12]{descentoperatorchains}, it was shown that $\beta_{\lambda,\emptyset}^{D}\leq\binom{n}{D^{+}}$
with equality if and only if $D$ has a single non-zero part or $\lambda=(1,\dots,1)$.
(In short, this is because $\beta_{(1,\dots,1),\emptyset}^{D}$ by
definition counts \textcolor{blue}{the set-compositions of $\{1,2,\dots,n\}$
into $l(D)$ blocks such that block $i$ contains $d_{i}$ elements,
and} an alternate view of $\beta_{\lambda,\emptyset}^{D}$ is that
it counts \textcolor{blue}{such set-compositions} with the extra condition
\textcolor{blue}{that $1,2,\dots,\lambda_{1}$ are in the same block,
$\lambda_{1}+1,\lambda_{1}+2,\dots,\lambda_{1}+\lambda_{2}$ are in
the same block, and so on.) }Now $\beta_{\lambda,\barlambda}^{D}$
and $\beta_{\lambda\cup\barlambda,\emptyset}^{D}$ are respectively
a signed and unsigned enumeration of the same set-compositions, so
$\beta_{\lambda,\barlambda}^{D}\leq\beta_{\lambda\cup\barlambda,\emptyset}^{D}$
with equality if and only if $\barlambda=\emptyset$ or, in all set-compositions
compatible with $\lambda,\barlambda,D$, the signed integers $\bari$
can only be in blocks corresponding unsigned parts of $D$. In particular,
if $\lambda\cup\barlambda=(1,\dots,1)$, then this latter condition
can only happen if $D$ has no decorated parts. So $\beta_{\lambda,\barlambda}^{D}\leq\beta_{\lambda\cup\barlambda,\emptyset}^{D}\leq\binom{n}{D^{+}}$,
with equality if and only if $D$ has a single non-zero part, or $\lambda=\barlambda=(1,\dots,1)$
and $D$ has no decorated parts, or $\lambda=(1,\dots,1)$ and $\barlambda=\emptyset$.
The hypotheses in Theorem \ref{thm:stationary} exactly rules out
the first two equality cases for some $D$ in the sum $\sum_{D}\frac{P(D)}{\binom{n}{D^{+}}}\beta_{\lambda,\barlambda}^{D}$,
so $\sum_{D}\frac{P(D)}{\binom{n}{D^{+}}}\beta_{\lambda,\barlambda}^{D}\leq1$
with equality if and only if $\lambda=(1,\dots,1),\barlambda=\emptyset$.
\item the multiplicity of the eigenvalue $\sum_{D}\frac{P(D)}{\binom{n}{D^{+}}}\beta_{(1,\dots,1),\emptyset}^{D}$
is the number of multisets $\{c_{1},\dots,c_{n}\}$ in $\calb_{1}^{\tau}\cup\calb_{1}^{-}$
--- by Theorem \ref{thm:evalues}, the required multiplicity is the
coefficient of $x_{1}^{n}$ in $\prod_{i}(1-x_{i})^{-b_{i}}\prod_{i}(1-\bar{x}_{i})^{-\bar{b}_{i}}$,
which is $\binom{b_{1}+n-1}{n}$. From (\ref{eq:multiplicity1}) and
(\ref{eq:multiplicity2}) defining the numbers $b_{i}$ and $\bar{b_{i}}$,
it is clear that $b_{1}=\dim\calh_{1}^{\tau}=|\calb_{1}^{\tau}\cup\calb_{1}^{-}|$,
hence $\binom{b_{1}+n-1}{n}$ is the number of $n$-element multisets
in $\calb_{1}^{\tau}\cup\calb_{1}^{-}$.
\end{enumerate}
Finally, we check $\sum_{x\in\calbn}\pi_{c_{1},\dots,c_{n}}(x)=1$.
The key is the following identity, from the proof of the type A case
\cite[Th. 3.12]{descentoperatorchains}: 
\begin{equation}
\sum_{x\in\calbn}\eta(x)\xi_{d_{1},d_{2},\dots,d_{n}}^{x}=n!,\label{eq:sumetaxi}
\end{equation}
for any choice of $d_{1},\dots,d_{n}\in\calb_{1}$. Now, given $\{c_{1},\dots,c_{k}\}$
a multiset in $\calb_{1}^{\tau}$ and $\{c_{k+1},\dots,c_{n}\}$ a
multiset in $\calb_{1}^{-}$, let $c_{i}=\frac{1}{2}(d_{i}+d_{i}')$
for $i>k$, with $d_{i},d'_{i}\in\calb_{1}$. Then the coefficient
of $x$ in $c_{1}\dots c_{n}$ is  
\[
\frac{1}{2^{n-k}}\left(\xi_{c_{1},\dots,c_{k},d_{k+1},\dots,d_{n}}^{x}+\xi_{c_{1},\dots,c_{k},d'_{k+1},\dots,d_{n}}^{x}+\xi_{c_{1},\dots,c_{k},d_{k+1},d'_{k+2},\dots,d_{n}}^{x}+\xi_{c_{1},\dots,c_{k},d'_{k+1},d_{k+2}',\dots,d_{n}}^{x}+\dots\right)
\]
with $2^{n-k}$ terms in total, \textcolor{green}{running through
each combination of $d_{i}$ or $d_{i}'$}. Each of these $2^{n-k}$
terms, when multiplied by $\eta(x)$ and summed over $x\in\calbn$,
gives $n!$ by (\ref{eq:sumetaxi}), and the same is true when considering
the coefficient of $x$ in $c_{\sigma(1)}\dots c_{\sigma(n)}$ for
all $\sigma\in\sn$. 
\end{proof}

\section{Applications to Hyperoctahedral Riffle-Shuffling\label{sec:Card-Shuffling}}

This section applies the eigenvector formulas of Section \ref{subsec:oriffle-evector}
to analyse hyperoctahedral riffle-shuffles, as defined in Section
\ref{subsec:Hyperoctahedral-Riffle-Shuffles} below. Recall that each
formula depends on a basis of primitives, \textcolor{green}{which
Section \ref{subsec:lyndonword} will describe}, before full computation
of the eigenbasis in Section \ref{subsec:oriffle-ebasis-cards} and
its application to deduce the expected number of descents in Section
\ref{subsec:oriffle-descentpeak}.

\subsection{Hyperoctahedral Riffle-Shuffles\label{subsec:Hyperoctahedral-Riffle-Shuffles}}

For the rest of the paper, we study two families of hyperoctahedral
riffle-shuffles, the \emph{$a$-handed riffle-shuffle with rotation}
and \emph{$a$-handed riffle-shuffle with flip}. These are essentially
the Markov chains described in Theorem \ref{thm:chain-3step}, when
$\calh$ is the signed shuffle algebra of Example \ref{ex:shufflealg},
$\tau$ and $\tautilde$ are as defined in (\ref{eq:tau-shufflealg})
and (\ref{eq:tautilde-shufflealg}), and $m\circ\Delta_{P(\tau)}=\frac{1}{a^{n}}\oriftauplus$
or $\frac{1}{a^{n}}\oriftauminus$ (for rotation), or $\frac{1}{a^{n}}\oriftautildeplus$
or $\frac{1}{a^{n}}\oriftautildeminus$ (for flip). The factor of
$\frac{1}{a^{n}}$ is necessary in order that the resulting operator
can be written in the form in (\ref{eq:mdeltaP}), for a probability
distribution $P$ that sums to 1. For example, for $\frac{1}{a^{n}}\oriftauplus$,
the distribution is $P(D)=\binom{n}{D^{+}}$ for all $D$ with\textcolor{green}{{}
$a$ parts and odd parts positive, even parts negative}; and $P(D)=0$
otherwise. The case for the other three operators are similar.

We take the state space $\calbn$ to be all $2^{n}n!$ ways of ordering
$n$ distinct cards and tracking their orientations. Algebraically,
$\calbn$ is the set of words of length $n$, where, for each $i\in\{1,2,\dots,n\}$,
$i$ or $\bari$ appear exactly once. We may call these \emph{signed
permutations of $n$}, viewed in one-line notation. Technically, $\calbn$
is not a basis of $\calhn$, which should include decks with repeated
cards (words with repeated letters), but the span of $\calbn$ is
invariant under $m\circ\Delta_{P(\tau)}$ and $m\circ\Delta_{P(\tautilde)}$
for all $P$, so the theory of the previous sections can apply with
minor modifications. Note that $\eta(x)=1$ for all $x\in\calbn$.

The application of Theorem \ref{thm:chain-3step} to this situation
shows:
\begin{cor}
\label{cor:shuffle3step} Fix an integer $n$, and let $x,y$ be signed
permutations of $n$. For a fixed integer $a$, the matrix
\[
K(x,y):=\mbox{ coefficient of }y\mbox{ in }\frac{1}{a^{n}}\oriftauplus(x)\text{ (resp. }\frac{1}{a^{n}}\oriftauminus(x)\text{ )}
\]
is the transition matrix for the following card shuffle:

\begin{enumerate}[label=\arabic*.]
\item Choose a weak-composition $\left(d_{1},\dots,d_{a}\right)$ of $n$
according to the multinomial distribution, i.e. with probability $\frac{1}{a^{n}}\binom{n}{d_{1}\ \dots\ d_{a}}$.
\item Cut the deck into $a$ piles so the $i$th pile contains $d_{i}$
cards. 
\item Rotate the first, third, fifth, ... piles (resp. second, fourth, sixth,
... piles) by 180 degrees.
\item Uniformly choose one of the $\binom{n}{d_{1}\ \dots\ d_{a}}$ interleavings
of the $a$ piles. Equivalently, drop the cards one-by-one from the
bottom of one of the $a$ piles, chosen with probability proportional
to the current pile size.
\end{enumerate}
If we use $\tautilde$ in place of $\tau$, then Step 3 should flip
the piles upside down instead of rotating. \qed
\end{cor}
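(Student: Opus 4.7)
The plan is to apply Theorem \ref{thm:chain-3step} directly, after recognizing $\frac{1}{a^n}\oriftauplus$ (and its three siblings) as $m\circ\Delta_{P(\tau)}$ for a specific choice of $P$. First I would verify that the signed shuffle algebra, together with $\tau$ or $\tautilde$ from (\ref{eq:tau-shufflealg})--(\ref{eq:tautilde-shufflealg}) and the basis $\calbn$ of signed permutations, satisfies Setup \ref{setup:chain}: the algebra is commutative, $\tau$ and $\tautilde$ permute $\calb$, the deconcatenation coproduct has coefficients $\eta_x^{z_1,\dots,z_l}\in\{0,1\}$ (equal to $1$ iff $x=z_1z_2\cdots z_l$ as a concatenation with $|z_i|=|d_i|$), the shuffle product has non-negative structure constants $\xi$, and finally $\eta(x)=1$ for every signed permutation since $\Delta_{1,\dots,1}(x)$ is the single tensor of its letters.

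Next I would identify $P$. Recall $\oriftauplus = \sum_D m\circ\Delta_{D(\tau)}$, where the sum ranges over all signed weak-compositions $D$ with $a$ parts whose sign pattern is $(+,-,+,-,\dots)$. Setting $P(D):=\binom{n}{D^+}/a^n$ on such $D$ and $0$ elsewhere makes $\sum_D P(D)=1$ (multinomial theorem) and yields $\sum_D \tfrac{P(D)}{\binom{n}{D^+}} m\circ\Delta_{D(\tau)} = \tfrac{1}{a^n}\oriftauplus$, exactly matching (\ref{eq:mdeltaP}). The same recipe works for $\oriftauminus$, $\oriftautildeplus$, $\oriftautildeminus$, with only the sign pattern of $D$ (and the use of $\tautilde$ versus $\tau$) changing.

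Now I would translate each of the four steps of Theorem \ref{thm:chain-3step} into the card-shuffling language. Step 1 becomes: choose $(d_1,\dots,d_a)$ with probability $\binom{n}{d_1\,\dots\,d_a}/a^n$, which is exactly the multinomial distribution. Step 2: because $\eta\equiv 1$ and $\eta_x^{z_1,\dots,z_a}$ is $1$ on the unique concatenation decomposition of $x$ into blocks of sizes $d_1,\dots,d_a$ and $0$ otherwise, this step deterministically cuts $x$ into piles $z_1,\dots,z_a$ of the chosen sizes. Step 3: the decorated parts of $D$ are precisely the even-indexed (resp.\ odd-indexed) piles for $\oriftauplus$ (resp.\ $\oriftauminus$), and applying $\tau$ rotates them by $180^\circ$ by (\ref{eq:tau-shufflealg}), while $\tautilde$ flips them by (\ref{eq:tautilde-shufflealg}). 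Step 4: with $\eta\equiv 1$, the probability of landing on $y$ reduces to $\xi_{z_1',\dots,z_a'}^y/\binom{n}{D^+}$; in the shuffle algebra $\xi_{z_1',\dots,z_a'}^y$ is the number of ways $y$ arises as an interleaving of $z_1',\dots,z_a'$, which is $1$ when $y$ is such an interleaving and $0$ otherwise, giving the uniform distribution over the $\binom{n}{D^+}$ interleavings.

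The only remaining routine check is the equivalence of the uniform-interleaving description with the sequential bottom-dropping description; this is the well-known fact that a uniformly random interleaving of piles of sizes $d_1,\dots,d_a$ is generated by repeatedly selecting one of the $a$ piles with probability proportional to its current size and appending its bottom card. I expect no real obstacle: the main subtlety is simply verifying that the parity convention for which piles are decorated in $\oriftauplus$ versus $\oriftauminus$ (depending on whether $a$ is even or odd, as in (\ref{eq:oriffle-def})) translates correctly to ``first, third, fifth, \ldots'' versus ``second, fourth, sixth, \ldots'' in the card-shuffle description, and that the degree-zero tensorands arising from parts $d_i=0$ act as identity under both $\tau$ and $\tautilde$, so zero-sized piles contribute nothing.
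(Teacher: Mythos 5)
Your proposal is correct and is essentially the paper's own argument: Section \ref{subsec:Hyperoctahedral-Riffle-Shuffles} likewise realises $\frac{1}{a^{n}}\oriftauplus$ (and its three siblings) as $m\circ\Delta_{P(\tau)}$ with $P(D)=\binom{n}{D^{+}}/a^{n}$ on the alternating-sign weak-compositions with $a$ parts, notes $\eta\equiv1$ on signed permutations (and that $\calbn$ spans an invariant subspace even though it omits decks with repeated cards), and then reads off the four steps of Theorem \ref{thm:chain-3step} as cut, rotate-or-flip, interleave, quoting \cite[Lem.~1]{riffleshuffle} for the bottom-dropping equivalence, exactly as you do. Your identification of the decorated (hence rotated or flipped) piles as the even-indexed ones for $\oriftauplus$ agrees with (\ref{eq:oriffle-def}) and with the paper's own description of $P$ in that section, so the ``parity translation'' you flag at the end concerns the corollary's wording rather than any gap in your argument.
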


The equivalence in step 4 is proved in \cite[Lem. 1]{riffleshuffle}
regarding riffle-shuffles.

We would like to compute the expected value of some functions under
this shuffle, using the eigenvectors of the hyperoctahedral descent
operators as proved in Section \ref{subsec:oriffle-evector}. By Propositions
2.1 and 2.4 of \cite{descentoperatorchains} (reproduced below), such
calculations should take place on the\textcolor{green}{{} dual of the
algebra defining the chain}, namely the signed free associative algebra
in our case.
\begin{prop}
\cite[Prop. 2.1, 2.4]{descentoperatorchains} \label{prop:eigenfunction-expectation}
Suppose $K$ is the transition matrix for the Markov chain $\{X_{t}\}$
on the state space $\calb$. Let $\f:\calb\rightarrow\mathbb{R}$
be a (right) \emph{eigenfunction} of $\{X_{t}\}$ with eigenvalue
$\beta$, meaning $\sum_{y\in\calb}K(x,y)\f(y)=\beta\f(x)$. Then
the expected value of $\f$ is
\[
\Expect(\f(X_{t})|X_{0}=x_{0}):=\sum_{y\in\calb}K^{t}(x_{0},y)\f(y)=\beta^{t}\f(x_{0}).
\]
In the particular case where $\{X_{t}\}$ arises from the construction
of Theorem \ref{thm:chain-3step}, the right eigenfunctions $\f$
are in bijection with the eigenvectors $f\in\calh^{*}$ of the dual
map $m\circ\Delta_{P(\tau^{*})}$, through the vector space isomorphism
\[
\f(x):=\frac{1}{\eta(x)}f(x).
\]
 \qed
\end{prop}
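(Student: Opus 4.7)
The first identity holds for any finite-state Markov chain with any right eigenfunction, and my plan is a short induction on $t$. The base case $t=1$ is literally the eigenfunction equation, and for the step I would factor $K^{t+1}(x_0,y) = \sum_z K(x_0,z)K^t(z,y)$, swap the order of summation in $\sum_y K^{t+1}(x_0,y)\f(y)$, apply the inductive hypothesis to the inner sum to produce $\sum_z K(x_0,z)\beta^t\f(z)$, and finally use the eigenfunction equation once more to extract the remaining factor of $\beta$.

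The bijection between right eigenfunctions of $\{X_t\}$ and eigenvectors of $m\circ\Delta_{P(\tau^{*})}$ on $\calhdual$ is the main content, and I would obtain it by directly unwinding the formula
\[
K(x,y) \;=\; \frac{\eta(y)}{\eta(x)}\,[y]\,m\circ\Delta_{P(\tau)}(x)
\]
from Theorem \ref{thm:chain-3step}. Substitute this into $\sum_y K(x,y)\f(y) = \beta\f(x)$, set $f(x):=\eta(x)\f(x)$ (extended linearly to a functional $f \in \calhndual$), and observe that the $\eta(y)$ factors cancel. Multiplying through by $\eta(x)$ collapses the right eigenfunction equation to
\[
\sum_{y} f(y)\,[y]\,m\circ\Delta_{P(\tau)}(x) \;=\; \beta f(x),
\]
which is literally $f\bigl(m\circ\Delta_{P(\tau)}(x)\bigr) = \beta f(x)$, i.e.\ $(m\circ\Delta_{P(\tau)})^{*}f = \beta f$. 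Combined with the identity $(m\circ\Delta_{P(\tau)})^{*} = m\circ\Delta_{P(\tau^{*})}$ recorded in Section \ref{subsec:hdos}, this produces the desired eigenvector.

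Each step of the substitution is linear and reversible, so $\f \mapsto f = \eta\cdot\f$ sets up a vector space isomorphism preserving eigenvalues. I do not anticipate a serious obstacle: the one point deserving a sanity check is that $\eta(x)>0$ for $x \in \calbn$, so that the assignment $\f \leftrightarrow f/\eta$ is well-defined pointwise, and this is guaranteed by condition \textit{v} of Setup \ref{setup:chain}. The argument essentially mirrors the type A proof in \cite[Prop. 2.1, 2.4]{descentoperatorchains}; the only change is to replace $\Delta_{P}$ by the decorated variant $\Delta_{P(\tau)}$, which does not affect any of the reasoning.
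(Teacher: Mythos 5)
Your argument is correct and is essentially the approach the paper relies on: the proposition is stated without an internal proof (it is quoted from the type A source \cite{descentoperatorchains}), and your induction on $t$ for the expectation together with the substitution $f=\eta\cdot\f$ into the transition-matrix formula of Theorem \ref{thm:chain-3step}, combined with the duality $(m\circ\Delta_{P(\tau)})^{*}=m\circ\Delta_{P(\tau^{*})}$ recorded in Section \ref{subsec:hdos} and the positivity of $\eta$ from Setup \ref{setup:chain}, is exactly the intended verification. I see no gaps.
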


\subsection{Lyndon Word Combinatorics for Signed Words\label{subsec:lyndonword}}

Let $\cala$ be an ordered alphabet, and $T(\cala)$ the free associative
algebra over $\cala$. The usual basis for $Prim(T(\cala))$ are the
``standard-bracketings of Lyndon words in $\cala$'' \cite[Ch. 5]{lothaire}.
The following explains this when $\cala=\{\bar{1}\preceq1\preceq\bar{2}\preceq2\preceq\dots\preceq\bar{N}\preceq N\}$,
with an important modification so that the output will be either $\tau$-invariant
or $\tau$-negating, and hence suitable for input into the eigenvector
formulas of Theorems \ref{thm:flipriffle-evector} and \ref{thm:rotriffleodd-evector}.

A word is \emph{Lyndon} if it is lexicographically smaller than all
its cyclic rearrangements. For example, $\bar{1}6\bar{7}\bar{2}$
is Lyndon, because $\bar{1}6\bar{7}\bar{2}\prec6\bar{7}\bar{2}\bar{1},\ \bar{7}\bar{2}\bar{1}6,\ \bar{2}\bar{1}6\bar{7}$.
In contrast, $\bar{5}3$ and $3\bar{5}3\bar{5}$ are not Lyndon.
\begin{rem*}
A word with distinct letters is Lyndon if and only if its starting
letter is minimal amongst all its letters.
\end{rem*}
\begin{defn}
\label{def:stdbrac} Given a Lyndon word $u$, its \emph{signed standard-bracketing}
$\stdbrac(u)$ is computed recursively as follows: if $u=i$ is a
single positive letter, set $\stdbrac(u):=i+\bari$. If $u=\bari$
is a single negative letter, set $\stdbrac(u):=i-\bari$. Otherwise,
write $u$ as the concatentation of Lyndon words $\leftu$ and $\rightu$
both non-empty such that $\rightu$ is of maximal length -- \cite[Prop. 5.1.3]{lothaire}
asserts that this ``standard factorisation'' is possible. Then define
\[
\stdbrac(u):=[\stdbrac(\leftu),\stdbrac(\rightu)].
\]
(These brackets denote the Lie bracket, $[z,w]:=zw-wz$.)
\end{defn}

\begin{rem*}
From the previous Remark, it is easy to see that, if $u$ has distinct
letters, then $\rightu$ begins precisely with the second-minimal
letter of $u$.
\end{rem*}
\begin{example}
\label{ex: stdbrac}
\begin{align*}
\stdbrac(\bar{1}6\bar{7}\bar{2}): & =[\stdbrac(\bar{1}6\bar{7}),\stdbrac(\bar{2})]\\
 & =[\stdbrac(\bar{1}6\bar{7}),2-\bar{2}]\\
 & =[[1-\bar{1},\stdbrac(6\bar{7})],2-\bar{2}]\\
 & =[[1-\bar{1},[6+\bar{6},7-\bar{7}]],2-\bar{2}]\\
 & =1672-16\bar{7}2+1\bar{6}72-1\bar{6}\bar{7}2-1762-17\bar{6}2+1\bar{7}62+1\bar{7}\bar{6}2\\
 & \quad-\bar{1}672+\bar{1}6\bar{7}2-\bar{1}\bar{6}72+\bar{1}\bar{6}\bar{7}2+\bar{1}762+\bar{1}7\bar{6}2-\bar{1}\bar{7}62-\bar{1}\bar{7}\bar{6}2\\
 & \quad-6712+6\bar{7}12-\bar{6}712+\bar{6}\bar{7}12+7612+7\bar{6}12-\bar{7}612-\bar{7}\bar{6}12\\
 & \quad+67\bar{1}2-6\bar{7}\bar{1}2+\bar{6}7\bar{1}2-\bar{6}\bar{7}\bar{1}2-76\bar{1}2-7\bar{6}\bar{1}2+\bar{7}6\bar{1}2+\bar{7}\bar{6}\bar{1}2\\
 & \quad-167\bar{2}+16\bar{7}\bar{2}-1\bar{6}7\bar{2}+1\bar{6}\bar{7}\bar{2}+\dots\mbox{ (28 more terms, analogous to those above)}\\
 & \quad-2167+216\bar{7}-21\bar{6}7+21\bar{6}\bar{7}+\dots\mbox{ (28 more terms)}\\
 & \quad+\bar{2}167-\bar{2}16\bar{7}+\bar{2}1\bar{6}7-\bar{2}1\bar{6}\bar{7}+\dots\mbox{ (28 more terms)}.
\end{align*}
\end{example}

Proposition \ref{prop:bracket-tau} and Corollary \ref{cor:stdbrac-invariantornegating}
explain how to determine if $\stdbrac(u)$ is $\tau$-invariant or
$\tau$-negating.
\begin{prop}
\label{prop:bracket-tau} Let $\calh$ be any Hopf algebra. 

For $\tau:\calh\rightarrow\calh$ a linear involution that is an algebra
morphism,
\[
[\calh^{\tau},\calh^{\tau}]\subseteq\calh^{\tau},\qquad[\calh^{\tau},\calh^{-\tau}]\subseteq\calh^{-\tau},\qquad[\calh^{-\tau},\calh^{-\tau}]\subseteq\calh^{\tau}.
\]

For $\tautilde:\calh\rightarrow\calh$ a linear involution that is
an algebra antimorphism,
\[
[\calh^{\tautilde},\calh^{\tautilde}]\subseteq\calh^{-\tautilde},\qquad[\calh^{\tautilde},\calh^{-\tautilde}]\subseteq\calh^{\tautilde},\qquad[\calh^{-\tautilde},\calh^{-\tautilde}]\subseteq\calh^{-\tautilde}.
\]
\end{prop}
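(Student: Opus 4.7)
The plan is to verify each inclusion by a direct computation of $\tau([x,y])$ (respectively $\tautilde([x,y])$), using the explicit formula $[x,y] = xy - yx$ together with the morphism or antimorphism property. Once this computation is done in the generality of an eigenvector $x$ with eigenvalue $\epsilon_1 \in \{+1,-1\}$ and an eigenvector $y$ with eigenvalue $\epsilon_2 \in \{+1,-1\}$, all six inclusions follow by specialising the signs.

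For the first block, let $\tau$ be an algebra morphism and suppose $\tau(x)=\epsilon_1 x$, $\tau(y)=\epsilon_2 y$. Then
\[
\tau([x,y]) = \tau(xy) - \tau(yx) = \tau(x)\tau(y) - \tau(y)\tau(x) = \epsilon_1\epsilon_2(xy-yx) = \epsilon_1\epsilon_2\,[x,y],
\]
so $[x,y]$ lies in the $\epsilon_1\epsilon_2$-eigenspace. Taking $(\epsilon_1,\epsilon_2)=(+,+),(+,-),(-,-)$ recovers the three claimed inclusions for $\tau$.

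For the second block, let $\tautilde$ be an algebra antimorphism and suppose $\tautilde(x)=\epsilon_1 x$, $\tautilde(y)=\epsilon_2 y$. Then
\[
\tautilde([x,y]) = \tautilde(xy) - \tautilde(yx) = \tautilde(y)\tautilde(x) - \tautilde(x)\tautilde(y) = \epsilon_1\epsilon_2(yx-xy) = -\epsilon_1\epsilon_2\,[x,y],
\]
so $[x,y]$ lies in the $(-\epsilon_1\epsilon_2)$-eigenspace. Specialising again yields the three inclusions for $\tautilde$.

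There is essentially no obstacle: the only subtle point worth mentioning is to extend from eigenvectors to eigenspaces by bilinearity of the Lie bracket, which is automatic. The cleanest exposition is probably to state and prove the unified statement ``$[x,y]$ has eigenvalue $\pm\epsilon_1\epsilon_2$ (with the appropriate sign)'' once, and then list the three concrete inclusions as corollaries for each of $\tau$ and $\tautilde$.
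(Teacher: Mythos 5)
Your proposal is correct and is essentially the paper's own argument: a direct computation of $\tau([x,y])$ (resp. $\tautilde([x,y])$) from $[x,y]=xy-yx$ and the (anti)morphism property, which the paper simply writes out case by case instead of with the unified signs $\epsilon_1\epsilon_2$ and $-\epsilon_1\epsilon_2$. Your sign bookkeeping is consistent with the stated inclusions (and, incidentally, cleaner than the paper's last displayed line, which contains a small sign typo but concludes the same inclusion $[\calh^{-\tautilde},\calh^{-\tautilde}]\subseteq\calh^{-\tautilde}$).
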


\begin{proof}
For $x,y\in\calh^{\tau}$ and $\bar{x},\bar{y}\in\calh^{-\tau}$:
\begin{align*}
\tau([x,y]) & =\tau(xy-yx)=\tau(x)\tau(y)-\tau(y)\tau(x)=xy-yx=[x,y];\\
\tau([x,\bar{y}]) & =\tau(x\bar{y}-\bar{y}x)=\tau(x)\tau(\bar{y})-\tau(\bar{y})\tau(x)=x(-\bar{y})-(-\bar{y})x=-[x,\bar{y}];\\
\tau([\bar{x},\bar{y}]) & =\tau(\bar{x}\bar{y}-\bar{y}\bar{x})=\tau(\bar{x})\tau(\bar{y})-\tau(\bar{y})\tau(\bar{x})=(-\bar{x})(-\bar{y})-(-\bar{y})(-\bar{x})=[\bar{x},\bar{y}].
\end{align*}

For $x,y\in\calh^{\tautilde}$ and $\bar{x},\bar{y}\in\calh^{-\tautilde}$:
\begin{align*}
\tautilde([x,y]) & =\tautilde(xy-yx)=\tautilde(y)\tautilde(x)-\tautilde(x)\tautilde(y)=yx-xy=-[x,y];\\
\tautilde([x,\bar{y}]) & =\tautilde(x\bar{y}-\bar{y}x)=\tautilde(\bar{y})\tautilde(x)-\tautilde(x)\tautilde(\bar{y})=(-\bar{y})x-x(-\bar{y})=[x,\bar{y}];\\
\tautilde([\bar{x},\bar{y}]) & =\tautilde(\bar{x}\bar{y}-\bar{y}\bar{x})=\tautilde(\bar{y})\tautilde(\bar{x})-\tautilde(\bar{x})\tautilde(\bar{y})=(-\bar{y})(-\bar{x})+(-\bar{x})(-\bar{y})=[\bar{x},\bar{y}].
\end{align*}
\end{proof}

\begin{cor}
\label{cor:stdbrac-invariantornegating} Let $u$ be a Lyndon word
in the alphabet $\cala=\{\bar{1}\preceq1\preceq\bar{2}\preceq2\preceq\dots\preceq\bar{N}\preceq N\}$,
and $T(\cala)$ be the free associative algebra over $\cala$, i.e.
the signed free associative algebra. Then 
\begin{align*}
\stdbrac(u)\in(PrimT(\cala))^{\tau} & \iff u\mbox{ has an even number of negative letters};\\
\stdbrac(u)\in(PrimT(\cala))^{-\tau} & \iff u\mbox{ has an odd number of negative letters};\\
\stdbrac(u)\in(PrimT(\cala))^{\tautilde} & \iff u\mbox{ has an odd number of positive letters};\\
\stdbrac(u)\in(PrimT(\cala))^{-\tautilde} & \iff u\mbox{ has an even number of positive letters}.
\end{align*}
\end{cor}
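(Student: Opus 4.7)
The plan is to prove all four statements simultaneously by induction on the length of the Lyndon word $u$, using Proposition \ref{prop:bracket-tau} as the engine of the inductive step and Definition \ref{def:stdbrac} to handle the base case.

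For the base case, $u$ is a single letter. If $u = i$, then $\stdbrac(i) = i + \bari$; both $\tau$ and $\tautilde$ act on single letters by swapping $i \leftrightarrow \bari$, so this element is $\tau$-invariant and $\tautilde$-invariant. Since $u$ has $0$ negative letters (even) and $1$ positive letter (odd), this matches all four claims. If $u = \bari$, then $\stdbrac(\bari) = i - \bari$, and a direct computation shows it is $\tau$-negating and $\tautilde$-negating, consistent with $1$ negative (odd) and $0$ positive (even) letters.

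For the inductive step, write $u = \leftu\,\rightu$ via the standard factorisation, so that $\stdbrac(u) = [\stdbrac(\leftu), \stdbrac(\rightu)]$. By induction, the parity of negative letters in $\leftu$ and $\rightu$ determines whether each sub-bracketing lies in $T(\cala)^{\tau}$ or $T(\cala)^{-\tau}$, and similarly for $\tautilde$. The parity of negative letters in $u$ is additive: it is even iff $\leftu$ and $\rightu$ share the same parity, and odd otherwise. Applying the first line of Proposition \ref{prop:bracket-tau}, the bracket of two $\tau$-invariant or two $\tau$-negating elements is $\tau$-invariant, while a mixed bracket is $\tau$-negating; this exactly matches the parity of negatives in $u$. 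The $\tautilde$ case is analogous but with a shift: by the second line of Proposition \ref{prop:bracket-tau}, the bracket of two $\tautilde$-invariant or two $\tautilde$-negating elements is $\tautilde$-negating, while a mixed bracket is $\tautilde$-invariant. Since the parity of positive letters in $u$ is the sum of those in $\leftu$ and $\rightu$, checking the three sub-cases (both odd, mixed, both even) verifies the claim.

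There is no serious obstacle: both parts of the proposition cited are set up precisely to handle the two cases (algebra morphism versus antimorphism), and the standard factorisation guarantees that both $\leftu$ and $\rightu$ are Lyndon, so the inductive hypothesis applies. The only minor point of care is to track the sign flip between $\tau$ and $\tautilde$ in Proposition \ref{prop:bracket-tau} — where $\tau$ preserves invariance on the ``diagonal'' $(+,+)$ and $(-,-)$ cases but $\tautilde$ reverses it — which is exactly compensated by the fact that the $\tautilde$ statement tracks \emph{positive} letters rather than \emph{negative} letters.
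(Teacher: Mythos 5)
Your proof is correct and follows essentially the same route as the paper: induction on the length of the Lyndon word, with single letters as the base case and Proposition \ref{prop:bracket-tau} applied to the standard factorisation $u=\leftu\,\rightu$ in the inductive step (the paper writes out only the $-\tautilde$ line and declares the others analogous, whereas you track all four parities simultaneously, which is just a bookkeeping difference).
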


\begin{proof}
We prove the last line. The other three are entirely analogous.

Apply induction on the length of $u$. In the base case where $u$
is a single letter, then it has an even number of positive letters
precisely when $u$ is a negative letter, e.g. $u=\bari$. Then
$\stdbrac(u)=i-\bari\in(PrimT(\cala))^{-\tautilde}$ as required.

Now suppose $u$ has at least two letters. If $u$ has an even number
of positive letters, then there are two possibilities for $\leftu$
and $\rightu$:

\begin{itemize}
\item $\leftu$ and $\rightu$ both have an odd number of positive letters,
so by inductive hypothesis $\stdbrac(\leftu),\stdbrac(\rightu)\in(PrimT(\cala))^{\tautilde}$.
So 
\[
\stdbrac(u)=[\stdbrac(\leftu),\stdbrac(\rightu)]\subseteq[(PrimT(\cala))^{\tautilde},(PrimT(\cala))^{\tautilde}]\subseteq(PrimT(\cala))^{-\tautilde},
\]
using Proposition \ref{prop:bracket-tau} for the last inclusion.
\item $\leftu$ and $\rightu$ both have an even number of positive letters,
so $\stdbrac(\leftu),\stdbrac(\rightu)\in(PrimT(\cala))^{-\tautilde}$.
So 
\[
\stdbrac(u)\in[(PrimT(\cala))^{-\tautilde},(PrimT(\cala))^{-\tautilde}]\subseteq(PrimT(\cala))^{-\tautilde}
\]
also.
\end{itemize}
\end{proof}

\subsection{Eigenbasis Algorithm\label{subsec:oriffle-ebasis-cards}}

The previous section gave a signed standard-bracketing algorithm which
converts each Lyndon word to a $\tau$-invariant or $\tau$-negating
primitive. Theorems \ref{thm:flipriffle-evector} and \ref{thm:rotriffleodd-evector}
associate an eigenvector for each multiset of such primitives, but
it's more convenient to index the eigenvectors by words. The latter
indexing may be achieved by a bijection sending a word $w$ to its
\emph{decreasing Lyndon factorisation} $(u_{1},\dots,u_{k'})$, where
$w$ is the concatenation of the Lyndon words $u_{i}$ satisfying
$u_{1}\succeq u_{2}\succeq\dots\succeq u_{k'}$ in lexicographic order.
By the Remark before Definition \ref{def:stdbrac}, if $w$ has distinct
letters, then the $u_{i}$ begin at precisely the left-to-right minima
of $w$. For example, the decreasing Lyndon factorisation of $\bar{4}35\bar{1}6\bar{7}\bar{2}$
is $(\bar{4},35,\bar{1}6\bar{7}\bar{2})$. 

Hence the full eigenvector algorithm is as follows:

\begin{algo}\label{algo:shuffleevector} To associate to a word $w$
an eigenvector of $\oriftauplus$ or $\oriftauminus$ for odd $a$,
or of $\oriftautildeplus$ or $\oriftautildeminus$ for any $a$,
on the signed free associative algebra:
\begin{enumerate}[label=\arabic*.]
\item Take the decreasing Lyndon factorisation $u_{1},\dots,u_{k'}$ of
$w$.
\item Calculate $\stdbrac(u_{i})$ for each Lyndon factor using Definition
\ref{def:stdbrac}.
\item Use Corollary \ref{cor:stdbrac-invariantornegating} to determine
whether each result from Step 2 is $\tau$-invariant or $\tau$-negating
-- relabel the $\tau$-invariant ones as $p_{1},\dots,p_{k}$ and
the $\tau$-negating ones as $\barp_{1}.\dots,\barp_{\bark}$.
\item Assemble the $p_{i}$ and $\barp_{i}$ according to Theorem \ref{thm:flipriffle-evector}
or \ref{thm:rotriffleodd-evector} for the \textcolor{green}{desired
operator}. 
\end{enumerate}
By Theorems \ref{thm:flipriffle-evector}.iii and \ref{thm:rotriffleodd-evector}.iii,
the eigenvectors associated with all words in the above manner form
a basis of the free associative algebra $\calh$.\end{algo}
\begin{example}
\label{ex:rotriffle-evectoralgo} Let $w=\bar{4}35\bar{1}6\bar{7}\bar{2}$.
We calculate the corresponding eigenvector for $\orif\tau_{3}^{+}$.

\begin{enumerate}[label=\arabic*.]
\item The Lyndon factors of $w$ are $u_{1}=\bar{4}$, $u_{2}=35$, $u_{3}=\bar{1}6\bar{7}\bar{2}$.
\item Their standard bracketings are, respectively, $4-\bar{4}$, $[3+\bar{3},5+\bar{5}]$
and $[[1-\bar{1},[6+\bar{6},7-\bar{7}]],2-\bar{2}]$.
\item $\bar{4}$ has one negative letter, so $4-\bar{4}$ is $\tau$-negating.
Call this $\barp_{1}$.

$35$ has no negative letters, so $[3+\bar{3},5+\bar{5}]$ is $\tau$-invariant.
Call this $p_{1}$.

$\bar{1}6\bar{7}\bar{2}$ has three negative letters, so $[[1-\bar{1},[6+\bar{6},7-\bar{7}]],2-\bar{2}]$
is $\tau$-negating. Call this $\barp_{2}$.
\item Since $a=3$ is odd, Theorem \ref{thm:rotriffleodd-evector} applies.
By inputting $p_{1},\barp_{1},\barp_{2}$ above into (\ref{eq:rotriffle-evector}):
\begin{align*}
 & (4-\bar{4})[[1-\bar{1},[6+\bar{6},7-\bar{7}]],2-\bar{2}][3+\bar{3},5+\bar{5}]+(4-\bar{4})[3+\bar{3},5+\bar{5}][[1-\bar{1},[6+\bar{6},7-\bar{7}]],2-\bar{2}]\\
+ & [[1-\bar{1},[6+\bar{6},7-\bar{7}]],2-\bar{2}][3+\bar{3},5+\bar{5}](4-\bar{4})+[3+\bar{3},5+\bar{5}][[1-\bar{1},[6+\bar{6},7-\bar{7}]],2-\bar{2}](4-\bar{4})
\end{align*}
is an eigenvector for $\orif\tau_{3}^{+}$, of eigenvalue $3$.
\end{enumerate}
\end{example}

\begin{example}
\label{ex:flipriffle-evectoralgo} We calculate the eigenvector for
$\orif\tautilde_{2}^{+}$ and $\orif\tautilde_{3}^{-}$ corresponding
to $w=\bar{4}\bar{3}5\bar{1}6\bar{7}\bar{2}$. Steps 1 and 2 are similar
to Example \ref{ex:rotriffle-evectoralgo} above.

\begin{enumerate}
\item[3.] $\bar{4}$ has no positive letters, so $4-\bar{4}$ is $\tautilde$-negating.
Call this $\barp_{1}$.

$\bar{3}5$ has one positive letter, so $[3-\bar{3},5+\bar{5}]$ is
$\tautilde$-invariant. Call this $p_{1}$.

$\bar{1}6\bar{7}\bar{2}$ has one positive letter, so $[[1-\bar{1},[6+\bar{6},7-\bar{7}]],2-\bar{2}]$
is $\tautilde$-invariant. Call this $p_{2}$.
\item[4.]  Since $a=2$ is even, input $p_{1},p_{2,}\barp_{1}$ above into
the second formula in Theorem \ref{thm:flipriffle-evector}.ia:
\begin{align*}
 & (4-\bar{4})\left([3-\bar{3},5+\bar{5}][[1-\bar{1},[6+\bar{6},7-\bar{7}]],2-\bar{2}]\right.\\
 & \hphantom{(4-\bar{4})}\left.\qquad+[3-\bar{3},5+\bar{5}][[1-\bar{1},[6+\bar{6},7-\bar{7}]],2-\bar{2}]\right)
\end{align*}
 is an eigenvector for $\orif\tautilde_{2}^{+}$, of eigenvalue 0.
And, for $\orif\tautilde_{3}^{-}$, input $p_{1},p_{2,}\barp_{1}$
into Theorem \ref{thm:flipriffle-evector}.ic:
\begin{align*}
 & (4-\bar{4})\left([3-\bar{3},5+\bar{5}][[1-\bar{1},[6+\bar{6},7-\bar{7}]],2-\bar{2}]\right.\\
 & \hphantom{(4-\bar{4})}\left.\qquad+[3-\bar{3},5+\bar{5}][[1-\bar{1},[6+\bar{6},7-\bar{7}]],2-\bar{2}]\right)\\
 & +\left([3-\bar{3},5+\bar{5}][[1-\bar{1},[6+\bar{6},7-\bar{7}]],2-\bar{2}]\right.\\
 & \hphantom{+}\left.\qquad+[3-\bar{3},5+\bar{5}][[1-\bar{1},[6+\bar{6},7-\bar{7}]],2-\bar{2}]\right)(4-\bar{4})
\end{align*}
is an eigenvector of eigenvalue $-3$.
\end{enumerate}
\end{example}

\subsection{Multiplicities of Eigenvalues\label{subsec:oriffle-evaluesequence}}

Because $\calbn$ is not a basis of $\calhn$ (as we're ignoring decks
with repeated cards), Proposition \ref{prop:oriffle-evalues} does
not apply to determine the multiplicities of the hyperoctahedral riffle-shuffles.
Instead, we enumerate the signed permutations \textcolor{green}{with
the required type of Lyndon factors as listed in Corollary \ref{cor:stdbrac-invariantornegating}. }
\begin{thm}
\textcolor{green}{\label{thm:oriffle-evaluemultiplicity} }Write $[g]f$
to mean the cofficient of the monomial $g$ in the power series $f$.
The (algebraic) multiplicities of the eigenvalues of the hyperoctahedral
riffle-shuffles are as given in Table \ref{tab:oriffle-evaluemultiplicities}.
\end{thm}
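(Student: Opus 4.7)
The plan is to count eigenvectors directly via the basis produced by Algorithm~\ref{algo:shuffleevector}. A preliminary step is to verify that this algorithm, applied to the sub-basis $\calbn$ of signed permutations rather than all of $\calhn$, still yields a linearly independent set spanning $\sspan\calbn$. The key observation is that a signed permutation $w$ has a unique decreasing Lyndon factorisation $w = u_{1}u_{2}\cdots u_{k'}$ whose factors have pairwise disjoint integer-supports (each $i\in\{1,\dots,n\}$ appears, in signed form, in exactly one $u_{j}$). Consequently, every summand in the expansion of the eigenvector attached to $w$ is again a signed permutation, and the PBW-triangularity arguments used in the proofs of Theorems~\ref{thm:flipriffle-evector}.iii and~\ref{thm:rotriffleodd-evector}.iii show that distinct $w$ produce eigenvectors with distinct highest-length terms. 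Counting $|\calbn|=2^{n}n!$ eigenvectors against $\dim\sspan\calbn=2^{n}n!$ then gives a basis.

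The next step is to read off the eigenvalue attached to each signed permutation. Corollary~\ref{cor:stdbrac-invariantornegating} declares $\stdbrac(u_{j})$ to be $\tau$-invariant or $\tau$-negating according to the parity of negative letters in $u_{j}$, and $\tautilde$-invariant or $\tautilde$-negating according to the parity of positive letters in $u_{j}$. Writing $k$ for the number of invariant factors and $\bark=k'-k$ for the number of negating factors, Theorems~\ref{thm:flipriffle-evector} and~\ref{thm:rotriffleodd-evector}, combined with the normalising factor $a^{-n}$ from Corollary~\ref{cor:shuffle3step}, give the eigenvalue as an explicit function of $(k,\bark)$: for example $a^{k-n}$ for $\oriftauplus$ with odd $a$, $(-1)^{\bark}a^{k-n}$ for $\oriftauminus$ with odd $a$, and $0$ when $\bark>0$ for the even-$a$ operators.

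The multiplicity of each target eigenvalue therefore equals the number of signed permutations of $n$ whose decreasing Lyndon factorisation has the prescribed pair $(k,\bark)$. To compute these counts, the plan is to use a Witt-style bivariate generating function tracking Lyndon words on the signed alphabet by length and by parity of negative (respectively positive) letters, and then to assemble signed permutations via the bijection between $\calbn$ and sets of such Lyndon words with disjoint integer-supports summing to $\{1,\dots,n\}$. The monomial coefficients of the resulting power series yield the entries of Table~\ref{tab:oriffle-evaluemultiplicities}.

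The main obstacle I anticipate is the case-by-case bookkeeping. The four operator families distinguish the contributions of invariant and negating factors differently; odd-$a$ operators admitting two eigenvectors per configuration (Theorem~\ref{thm:flipriffle-evector}.ib) require a consistent choice of formula in Algorithm~\ref{algo:shuffleevector} so that its output remains a basis; and eigenvalues $\pm a^{k}$ may coincide across different parities of $\bark$ and must be aggregated correctly. Once the Lyndon-parity generating function is set up, however, each entry of the table should fall out by routine coefficient extraction.
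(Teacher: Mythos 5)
Your overall strategy coincides with the paper's: index eigenvectors by signed permutations through their decreasing Lyndon factorisations, read off the eigenvalue from the parity profile $(k,\bark)$ of the factors via Corollary \ref{cor:stdbrac-invariantornegating}, and reduce the multiplicities to counting signed permutations with a prescribed profile. There is, however, one genuine flaw in your preliminary step. The claim that Algorithm \ref{algo:shuffleevector} produces $2^{n}n!$ eigenvectors forming a basis of $\sspan\calbn$ is false for the rotation shuffles $\frac{1}{a^{n}}\oriftauplus$ and $\frac{1}{a^{n}}\oriftauminus$ with $a$ even: the algorithm is only stated for odd $a$ in the rotation case, and Theorem \ref{thm:rotriffleeven-evector} supplies eigenvectors only for words all of whose Lyndon factors are $\tau$-invariant ($\bark=0$). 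These operators are in general not diagonalisable (see the second remark after Theorem \ref{thm:evalues}), so words with $\bark>0$ do not index eigenvectors of eigenvalue $0$; they account for the \emph{generalised} $0$-eigenspace. For that row of Table \ref{tab:oriffle-evaluemultiplicities} one must argue as the paper does: by Theorem \ref{thm:rotriffleeven-evector}.iii there are no generalised eigenvectors for the non-zero eigenvalues (a property inherited by the invariant subspace $\sspan\calbn$), so the non-zero multiplicities are the $\bark=0$ counts, and the algebraic multiplicity of $0$ is obtained by subtraction, $2^{n}n!-1\cdot3\cdots(2n-1)$. Your statement that the eigenvalue is ``$0$ when $\bark>0$ for the even-$a$ operators'' is literally correct only for the flip shuffles, where Theorem \ref{thm:flipriffle-evector}.iii does give a genuine eigenbasis.

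On the counting itself your route differs from the paper, which introduces the hyperoctahedral Stirling numbers $C(n,k,\bark)$, proves $C(n,k,\bark)=2^{n-k-\bark}c(n,k+\bark)\binom{k+\bark}{k}$ (Lemma \ref{lem:bstirling-astirling}) and a recursion (Lemma \ref{lem:bstirling-recursion}), and then uses an induction to aggregate over the parity of $\bark$ for the odd-$a$ minus shuffles. Your Lyndon-word/species plan is viable and arguably cleaner: a Lyndon factor supported on a given $m$-set has $(m-1)!$ possible arrangements and $2^{m-1}$ sign patterns of each parity, which gives $\sum_{k,\bark}C(n,k,\bark)x^{k}y^{\bark}=(x+y)(x+y+2)\cdots(x+y+2n-2)$; setting $y=1$ gives the odd-$a$ plus entry $(x+1)(x+3)\cdots(x+2n-1)$, and averaging/differencing the $y=\pm1$ specialisations gives exactly $(x+n-1)(x+1)(x+3)\cdots(x+2n-3)$ and $n(x+1)(x+3)\cdots(x+2n-3)$, replacing the paper's recursion-plus-induction. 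But as written you stop at ``routine coefficient extraction'' without carrying out this parity aggregation, which is precisely the nontrivial step in the paper's proof; spelling out the $y=\pm1$ substitution (or an equivalent argument) is needed to actually produce the entries of Table \ref{tab:oriffle-evaluemultiplicities}.
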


\begin{table}
\centering{}%
\begin{tabular}{|c|c|c|c|}
\hline 
parity of $a$ & shuffle & eigenvalue & multiplicity\tabularnewline
\hline 
\hline 
\multirow{2}{*}{even} & $\frac{1}{a^{n}}\oriftauplus$ or $\frac{1}{a^{n}}\oriftauminus$ & $a^{k-n}$ & $[x^{k}]x(x+2)(x+4)\dots(x+2n-2)$\tabularnewline
\cline{3-4} \cline{4-4} 
 & or $\frac{1}{a^{n}}\oriftautildeplus$ or $\frac{1}{a^{n}}\oriftautildeminus$ & 0 & $2^{n}n!-\frac{(2n)!}{2^{n}n!}$\tabularnewline
\hline 
\multirow{3}{*}{odd} & \multirow{1}{*}{$\frac{1}{a^{n}}\oriftauplus$ or $\frac{1}{a^{n}}\oriftautildeplus$ } & $a^{k-n}$ & $[x^{k}](x+1)(x+3)\dots(x+2n-1)$\tabularnewline
\cline{2-4} \cline{3-4} \cline{4-4} 
 & \multirow{2}{*}{$\frac{1}{a^{n}}\oriftauminus$ or $\frac{1}{a^{n}}\oriftautildeminus$} & $a^{k-n}$ & $[x^{k}](x+n-1)(x+1)(x+3)\dots(x+2n-3)$\tablefootnote{This is a signless version of OEIS sequence A039762}\tabularnewline
\cline{3-4} \cline{4-4} 
 &  & $-a^{k-n}$ & $[x^{k}]n(x+1)(x+3)\dots(x+2n-3)$\tabularnewline
\hline 
\end{tabular}\caption{\label{tab:oriffle-evaluemultiplicities}Multiplicities of the eigenvalues
of the hyperoctahedral riffle-shuffles }
\end{table}

To prove these multiplicities, let $c(n,k)$ be the \emph{signless
Stirling number of the first kind}, i.e. the number of permutations
of $n$ with $k$ left-to-right minima. Recall that, in a word with
distinct letters, the Lyndon factors start precisely at the left-to-right
minima, hence $c(n,k)$ is also the number of permutations of $n$
with $k$ Lyndon factors. Important for deducing the generating functions
of the multiplicities is the following generating function of $c(n,k)$
\begin{equation}
\sum_{k}c(n,k)x^{k}=x(x+1)\dots(x+n-1).\label{eq:stirlinggenfn}
\end{equation}

We define a hyperoctahedral analogue of $c(n,k)$: let $\mathcal{C}^{+}(n,k,\bark)$
(resp. $\mathcal{C^{-}}(n,k,\bark)$) be the set of signed permutations
of $n$ having $k$ Lyndon factors with an odd number of positive
(resp. negative) letters, and $\bark$ Lyndon factors with an even
number of positive (resp. negative) letters. Note that changing the
sign of all letters gives a bijection between $\mathcal{C}^{+}(n,k,\bark)$
and $\mathcal{C^{-}}(n,k,\bark)$, so let $C(n,k,\bark)$ denote their
common size. Observe $C(n,k,\bark)=C(n,\bark,k)$, by changing the
sign of the first letter in each factor.
\begin{lem}
\label{lem:bstirling-astirling}The hyperoctahedral Stirling numbers
are related to the type A Stirling numbers by
\[
C(n,k,\bark)=2^{n-k-\bark}c(n,k+\bark)\binom{k+\bark}{k}.
\]
\end{lem}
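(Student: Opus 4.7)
The plan is to set up a straightforward bijective/counting argument that reduces the signed count to the unsigned Stirling number by analysing the effect of signs on Lyndon factorisations.

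First I would verify the crucial structural observation that, for a signed permutation $w$ of $\{1,\dots,n\}$ in the alphabet $\{\bar 1 \prec 1 \prec \bar 2 \prec 2 \prec \cdots\}$, the positions of the left-to-right minima of $w$ depend only on the underlying unsigned permutation $w^+$ obtained by dropping every bar. The point is that in this ordering, both $i$ and $\bar i$ lie strictly between $\bar j, j$ with $j < i$ on the one side and $\bar j, j$ with $j > i$ on the other, so for distinct letters the sign does not affect the relative order. By the Remark preceding Definition \ref{def:stdbrac}, a word with distinct letters has its Lyndon factors starting exactly at its left-to-right minima, so $w$ and $w^+$ have the same factor boundaries and in particular the same number $j$ of Lyndon factors.

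Next I would count the signed preimages of a fixed unsigned permutation $w^+$ with $j=k+\bark$ left-to-right minima and Lyndon factor lengths $\ell_1,\dots,\ell_j$ (so $\sum \ell_i = n$). To land in $\mathcal{C}^-(n,k,\bark)$ I must choose which $k$ of the $j$ factors will have an odd number of negative letters ($\binom{j}{k}$ ways), and then in each factor of length $\ell_i\geq 1$ pick a sign assignment of the desired parity, which can be done in exactly $2^{\ell_i-1}$ ways. Multiplying gives $\binom{k+\bark}{k}\prod_i 2^{\ell_i -1} = \binom{k+\bark}{k}\, 2^{n-k-\bark}$ signed permutations per unsigned $w^+$.

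Finally, summing over the $c(n, k+\bark)$ unsigned permutations with exactly $k+\bark$ left-to-right minima yields
\[
C(n,k,\bark) \;=\; c(n,k+\bark)\,\binom{k+\bark}{k}\,2^{n-k-\bark},
\]
as claimed. The only step that requires care — and the one I would write most carefully — is the Lyndon-factorisation-preservation step, since without it the counting of sign patterns could not be carried out factor-by-factor.
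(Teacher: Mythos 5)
Your proof is correct and follows essentially the same counting as the paper: choose the parity pattern on the $k+\bark$ factors ($\binom{k+\bark}{k}$ ways), take an unsigned permutation with $k+\bark$ Lyndon factors ($c(n,k+\bark)$ ways), and assign signs factor-by-factor with $2^{\ell_i-1}$ choices of the prescribed parity, giving $2^{n-k-\bark}$ in total. The only difference is that you make explicit the observation (left implicit in the paper) that the Lyndon factor boundaries are unchanged by sign changes, since letters of distinct absolute value compare the same way regardless of sign; this is a worthwhile clarification but not a different argument.
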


\begin{proof}
Given $k$ and $\bark$, fix any sequence of $k$ positive signs and
$\bark$ negative signs. There are $\binom{k+\bark}{k}$ such sequences.
We show below that, for any such sequence, there are $2^{n-k-\bark}c(n,k+\bark)$
signed permutations whose Lyndon factors $u_{i}$ have an odd number
(resp. even number) of positive letters if the $i$th term of the
sequence is positive (resp. negative).

First, there are $c(n,k+\bark)$ permutations with $k+\bark$ Lyndon
factors in total. The factor of $2^{n-k-\bark}$ then enumerates the
signs that can be assigned: within each Lyndon factor, all but the
last letter can have arbitrary sign; the sign of the last letter is
then determined by the requirement that this factor have an odd (or
even) number of positive letters.
\end{proof}
\begin{lem}
\label{lem:bstirling-recursion} The hyperoctahedral Stirling numbers
satisfy the recursion
\begin{equation}
C(n,k,\bark)=C(n-1,k-1,\bark)+C(n-1,\bark-1)+2(n-1)C(n-1,k,\bark).\label{eq:bstirlingrelation}
\end{equation}
\end{lem}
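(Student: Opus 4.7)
My plan is to reduce the claim to the classical recursion $c(n,K)=c(n-1,K-1)+(n-1)c(n-1,K)$ for the signless Stirling numbers of the first kind, by invoking Lemma~\ref{lem:bstirling-astirling}. Throughout I would set $K:=k+\bark$ and interpret the middle term $C(n-1,\bark-1)$ in \eqref{eq:bstirlingrelation} as $C(n-1,k,\bark-1)$ (which is clearly the intended reading, both for dimensional reasons and because it is the only way to make the identity hold).

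The first step is to substitute the closed form $C(m,i,\bar{i})=2^{m-i-\bar{i}}\,c(m,i+\bar{i})\binom{i+\bar{i}}{i}$ from Lemma~\ref{lem:bstirling-astirline} into each of the four terms of \eqref{eq:bstirlingrelation}. A routine check shows that the three summands on the right-hand side share the common prefactor $2^{n-k-\bark}$: the terms $C(n-1,k-1,\bark)$ and $C(n-1,k,\bark-1)$ each contribute $2^{(n-1)-(k-1)-\bark}=2^{(n-1)-k-(\bark-1)}=2^{n-k-\bark}$, while the explicit factor $2$ in $2(n-1)C(n-1,k,\bark)$ combines with the $2^{(n-1)-k-\bark}$ coming from $C(n-1,k,\bark)$ to produce the same $2^{n-k-\bark}$. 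Next, I would apply Pascal's identity $\binom{K-1}{k-1}+\binom{K-1}{k}=\binom{K}{k}$ to merge the first two terms, reducing the right-hand side to
\[
2^{n-k-\bark}\binom{K}{k}\bigl[c(n-1,K-1)+(n-1)c(n-1,K)\bigr].
\]
The bracket equals $c(n,K)$ by the classical recursion for signless Stirling numbers, so the right-hand side collapses to $2^{n-k-\bark}\binom{K}{k}c(n,K)=C(n,k,\bark)$, as required.

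The argument is essentially bookkeeping of powers of $2$ and an application of Pascal's rule, so I expect no substantive obstacle once Lemma~\ref{lem:bstirling-astirline} is in hand. A purely combinatorial alternative would proceed by conditioning on the letter $x$ of absolute value $n$ in a signed permutation $w\in\mathcal{C}^{+}(n,k,\bark)$: either $x$ is a singleton Lyndon factor (necessarily $u_{1}$, as it is the largest letter in $w$), contributing $C(n-1,k-1,\bark)$ when $x=n$ and $C(n-1,k,\bark-1)$ when $x=\bar{n}$; or $x$ lies in the interior of some factor $u_{i}$. The sub-case $x=\bar{n}$ contributes $(n-1)C(n-1,k,\bark)$ cleanly, since removing $\bar{n}$ preserves the parity of positive letters in $u_{i}$ and so preserves its type. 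The main obstacle for the combinatorial route would be the sub-case $x=n$: removing $n$ flips the parity of positive letters in $u_{i}$, so the preimage $w'$ lands in $\mathcal{C}^{+}(n-1,k-1,\bark+1)\cup\mathcal{C}^{+}(n-1,k+1,\bark-1)$ rather than $\mathcal{C}^{+}(n-1,k,\bark)$, and recovering the remaining $(n-1)C(n-1,k,\bark)$ would require an auxiliary sign-flipping bijection between these shifted strata and $\mathcal{C}^{+}(n-1,k,\bark)$. This is exactly the subtlety that the algebraic reduction through Lemma~\ref{lem:bstirling-astirline} sidesteps.
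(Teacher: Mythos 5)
Your proposal is correct, but it proves the recursion by a genuinely different route than the paper. You substitute the closed form $C(m,i,\bar{i})=2^{m-i-\bar{i}}c(m,i+\bar{i})\binom{i+\bar{i}}{i}$ of Lemma \ref{lem:bstirling-astirling} into all four terms, pull out the common power of $2$, apply Pascal's rule, and invoke the classical recursion $c(n,K)=c(n-1,K-1)+(n-1)c(n-1,K)$; this is legitimate here since Lemma \ref{lem:bstirling-astirling} is established beforehand by an independent counting argument, so there is no circularity, and your reading of the middle term as $C(n-1,k,\bark-1)$ is indeed the intended one (the paper's statement has a typo). The paper instead proves the identity directly by a combinatorial insertion bijection on $\mathcal{C}^{+}(n,k,\bark)$: prepend $n$ (accounting for $C(n-1,k-1,\bark)$), prepend $\bar{n}$ (accounting for $C(n-1,k,\bark-1)$), insert $\bar{n}$ into any of the $n-1$ interior/end positions, or insert $n$ into any such position \emph{and flip the sign of the preceding letter} so that the parity of positive letters in that Lyndon factor is preserved --- exactly the ``auxiliary sign-flipping'' device you predicted would be needed in the sub-case you flagged as the obstacle to the combinatorial route. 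The trade-off: your algebraic argument is shorter and essentially bookkeeping once the closed form is known, while the paper's bijection is self-contained, explains each term of the recursion combinatorially, and does not lean on the classical Stirling recursion or on Lemma \ref{lem:bstirling-astirling} at all.
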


\begin{proof}
We construct a bijection \textcolor{green}{between the relevant sets}.
Each element of $\mathcal{C}^{+}(n,k,\bark)$ is obtained in precisely
one of the following manners:
\begin{itemize}
\item Adding $n$ to the start of an element of $\mathcal{\mathcal{C}^{+}}(n-1,k-1,\bark)$
- $n$ then becomes a new left-to-right minima, i.e. a new Lyndon
factor with one positive letter.
\item Adding $\bar{n}$ to the start of an element of $\mathcal{\mathcal{C}^{+}}(n-1,k,\bark-1)$
- $\bar{n}$ is then a new Lyndon factor with one negative and no
positive letters.
\item To an element of $\mathcal{\mathcal{C}^{+}}(n-1,k,\bark)$, add $\bar{n}$
between any two letters, or at the end. This does not produce any
new left-to-right minima, so $\bar{n}$ is inserted into one of the
existing Lyndon factors, and does not change its parity of its positive
letters.
\item To an element of $\mathcal{\mathcal{C}^{+}}(n-1,k,\bark)$, add $n$
between any two letters, or at the end, and also change the sign of
the preceding letter, which is necessarily within the same Lyndon
factor. This sign change ensures that the parity of positive letters
remains unchanged in this factor.
\end{itemize}
\end{proof}
\begin{proof}[Proof of Theorem \ref{thm:oriffle-evaluemultiplicity}]
 Fix $a$ even. From Theorems \ref{thm:rotriffleeven-evector} and
\ref{thm:flipriffle-evector}, and Corollary \ref{cor:stdbrac-invariantornegating},
the multiplicity of the eigenvalue $a^{k-n}$ is $|\mathcal{C}^{-}(n,0,k)|$
for $\frac{1}{a^{n}}\oriftauplus$ and $\frac{1}{a^{n}}\oriftauminus$,
and $|\mathcal{C}^{+}(n,k,0)|$ for $\frac{1}{a^{n}}\oriftautildeplus$
and $\frac{1}{a^{n}}\oriftautildeminus$. By the symmetry remarks
before Lemma \ref{lem:bstirling-astirling}, their common generating
function is
\begin{align*}
\sum_{k}C(n,k,0)x^{k} & =\sum_{k}2^{n-k}c(n,k)x^{k}\\
 & =2^{n}\sum_{k}c(n,k)\left(\frac{x}{2}\right)^{k}\\
 & =2^{n}\frac{x}{2}\left(\frac{x}{2}+1\right)\dots\left(\frac{x}{2}+n-1\right)\\
 & =x(x+2)(x+4)\dots(x+2n-2).
\end{align*}
0 is the only other eigenvalue. So that the multiplicities of all
eigenvalues sum to $2^{n}n!$, the multiplicity of 0 must be $2^{n}n!-1\cdot3\cdot\dots\cdot(2n-1)$.

Now consider $\frac{1}{a^{n}}\oriftautildeminus$ for $a$ odd. From
Theorems \ref{thm:rotriffleodd-evector} and Corollary \ref{cor:stdbrac-invariantornegating},
the multiplicity of the eigenvalue $a^{k-n}$ (resp. $-a^{k-n}$)
is $\sum_{\bark\text{ even}}|\mathcal{C}^{+}(n,k,\bark)|$ (resp.
$\sum_{\bark\text{ odd}}|\mathcal{C}^{+}(n,k,\bark)|$). It can be
proved by induction on $n$ that 
\begin{align*}
\sum_{\bark\text{ odd}}C(n,k,\bark) & =n\sum_{\bark}C(n-1,k,\bark),\\
\sum_{\bark\text{ even}}C(n,k,\bark) & =(n-1)\sum_{\bark}C(n-1,k,\bark)+\sum_{\bark}C(n-1,k-1,\bark);
\end{align*}
briefly, use (\ref{eq:bstirlingrelation}) to rewrite the left hand
side in terms of $C(n-1,-,-)$, apply the inductive hypothesis to
obtain sums of $C(n-2,-,-)$, over all values of $\bark$, then collect
terms using (\ref{eq:bstirlingrelation}) to obtain terms of the form
$C(n-1,-,-$). Then the claimed generating functions come from observing
that
\begin{align*}
\sum_{k,\bark}C(n-1,k,\bark)x^{k} & =\sum_{k,\bark}2^{n-1-k-\bark}c(n-1,k+\bark)\binom{k+\bark}{k}x^{k}\\
 & =\sum_{j}2^{n-1-j}c(n-1,j)(x+1)^{j}\\
 & =2^{n-1}\sum_{k}c(n-1,j)\left(\frac{x+1}{2}\right)^{j}\\
 & =2^{n-1}\frac{x+1}{2}\left(\frac{x+1}{2}+1\right)\dots\left(\frac{x+1}{2}+n-2\right)\\
 & =(x+1)(x+3)\dots(x+2n-3).
\end{align*}

The multiplicity of $a^{k-n}$ for $\frac{1}{a^{n}}\oriftautildeplus$
is the sum of the multiplicities of $a^{k-n}$ and $-a^{k-n}$ for
$\frac{1}{a^{n}}\oriftautildeminus$. The cases of $\frac{1}{a^{n}}\oriftauminus$
and $\frac{1}{a^{n}}\oriftauplus$ follow from the symmetry remarks
before Lemma \ref{lem:bstirling-astirling}.
\end{proof}

\subsection{Subdominant Eigenvectors and Expectations of Descents \label{subsec:oriffle-descentpeak}}

\textcolor{green}{The paper \cite[Exs. 5.8-5.9, Prop. 5.10]{hopfpowerchains}
identified} the eigenvectors of type A riffle-shuffles with large
eigenvalues, and expressed some of them in terms of two classical
permutation statistics, namely descents and peaks. This section dervies
one hyperoctahedral analogue.

First, to clarify what is meant by ``large'' eigenvalue:
\begin{defn}
An eigenvalue $\lambda$ of a Markov chain is \emph{subdominant} if
$|\lambda|<1$ and $|\lambda|>|\lambda'|$ for all other eigenvalues
$\lambda'\neq1,-1$.
\end{defn}

Informally, subdominant eigenvalues have maximal absolute value after
1. Their values and corresponding eigenspaces have the largest influence
on the convergence rate.

\textcolor{green}{The corresponding eigenvectors are expressed in
terms of the following terminology:}
\begin{defn}
\label{def:consecutivesubword} If $w=w_{1}\dots w_{n}$ where $w_{i}$
denotes the $i$th letter of $w$, then a \emph{consecutive subword}
of $w$ is a word of the form $w_{i}w_{i+1}\dots w_{j}$ for some
$i<j$. 
\end{defn}

For example, if $w=435\bar{1}6\bar{7}\bar{2}$, then $35\bar{1}$
and $6\bar{7}\bar{2}$ are consecutive subwords of $w$, but $36\mbox{\ensuremath{\bar{7}}}$
is not. 
\begin{defn}
\label{def:descent} Given a word $w$, its consecutive subword $ij$
is a \emph{descent} if $i>j$. This applies whether $i$ and $j$
are positive and negative letters. Let $\des(w)$ denote the number
of descents in $w$.
\end{defn}

For example, the descents of $435\bar{1}6\bar{7}\bar{2}$ are $43$,
$5\bar{1}$, and $6\bar{7}$, so $\des(w)=3$. Notice that this differs
from the definition of descent in Coxeter type B, where additionally
the first letter is a descent if it is negative. 

\begin{notn}\label{notn:subdominant-efns}For $i<j$, define 
\begin{equation}
\f_{ij}^{+}(w):=\begin{cases}
1 & \mbox{ if any of }ij,\ \bari\barj\mbox{ is a consecutive subword in }w;\\
-1 & \mbox{ if any of }ji,\ \barj\bari\mbox{ is a consecutive subword in }w;\\
0 & \mbox{ otherwise},
\end{cases}\label{eq:subdominantevector-fplus}
\end{equation}
and
\begin{equation}
\f_{ij}^{-}(w):=\begin{cases}
1 & \mbox{ if any of }\bari j,\ i\barj\mbox{ is a consecutive subword in }w;\\
-1 & \mbox{ if any of }j\bari,\ \barj i\mbox{ is a consecutive subword in }w;\\
0 & \mbox{ otherwise},
\end{cases}\label{eq:subdominantevector-fminus}
\end{equation}
And for $i\neq j$, define 
\begin{equation}
\ftilde_{ij}(w):=\begin{cases}
1 & \mbox{ if any of }ij,\ \bari j,\ \barj i,\ \barj\bari\mbox{ is a consecutive subword in }w;\\
-1 & \mbox{ if any of }ji,\ j\bari,\ i\barj,\ \bari\barj\mbox{ is a consecutive subword in }w;\\
0 & \mbox{ otherwise}.
\end{cases}\label{eq:subdominantevector-ftilde}
\end{equation}
And for all $i$, define 
\begin{equation}
\g_{i}(w):=\begin{cases}
1 & \mbox{ if }i\mbox{ is the first or last letter of }w;\\
-1 & \mbox{ if }\bari\mbox{ is the first or last letter of }w;\\
0 & \mbox{ otherwise}.
\end{cases}\label{eq:subdominantevector-g}
\end{equation}
\end{notn}
\begin{prop}
\label{prop:subdominant-efns} The subdominant eigenvalues of the
hyperoctahedral riffle-shuffles, and a basis of associated eigenfunctions,
are as given in Table \ref{tab:subdominanteigen}.
\end{prop}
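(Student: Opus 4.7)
The plan is to identify each eigenfunction $\f$ in Table \ref{tab:subdominanteigen} with a linear combination of eigenvectors produced by Algorithm \ref{algo:shuffleevector}. Since $\eta \equiv 1$ on $\calbn$, Proposition \ref{prop:eigenfunction-expectation} identifies right eigenfunctions of the shuffle with eigenvectors of the dual hyperoctahedral descent operator in the signed free associative algebra, and this dual operator is precisely what Algorithm \ref{algo:shuffleevector} diagonalises.

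First I would determine which words $w$ give rise to subdominant eigenvectors. The normalised eigenvalue associated to $w$ via the algorithm is $\pm a^{k-n}$, where $k$ is the number of invariant Lyndon factors of $w$, so subdominance requires $k = n-1$. A degree count on the Lyndon factorisation then forces exactly one of two configurations: (A) one length-$2$ invariant Lyndon factor together with $n-2$ length-$1$ invariant factors, or (B) $n-1$ length-$1$ invariant factors together with one length-$1$ negating factor.

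Second, I would compute each such algorithm eigenvector as an explicit function on signed permutations by extracting coefficients. The key simplification is that each single-letter invariant primitive $l + \bar{l}$ contributes $+1$ to both $l$ and $\bar{l}$, so the non-trivial sign information carries only from the standard bracketing of the length-$2$ factor in Case A, or from the single negating primitive $l - \bar{l}$ in Case B. Expanding the four length-$2$ Lyndon words of each invariance type and matching coefficients on signed permutations, one finds: in the rotation case, Case A with factor $ij$ yields $\f_{ij}^{+} + \f_{ij}^{-}$, Case A with factor $\bari\barj$ yields $\f_{ij}^{+} - \f_{ij}^{-}$, and Case B with negating letter $\bar{l}$ yields $\g_{l}$; in the flip case, the $\tautilde$-invariant length-$2$ Lyndon words $\bari j$ and $i\barj$ together produce $\ftilde_{ij}$. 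Taking sums and differences recovers each $\f$ as a linear combination of algorithm eigenvectors, hence as an eigenfunction with the appropriate eigenvalue; the sign $(-1)^{\bark}$ in Theorems \ref{thm:rotriffleodd-evector} and \ref{thm:flipriffle-evector} accounts for the eigenvalue $-a^{-1}$ in the $\oriftauminus$ and $\oriftautildeminus$ variants.

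Finally, I would match the count of the listed $\f$ functions against the multiplicities in Table \ref{tab:oriffle-evaluemultiplicities}. For example, in the rotation case with odd $a$, there are $2\binom{n}{2} + n = n^{2}$ functions, matching $[x^{n-1}](x+1)(x+3)\cdots(x+2n-1) = n^{2}$; the other cases are parallel. Linear independence is immediate from the distinct support pattern of each $\f$ (a specific consecutive-subword configuration or first/last-letter condition). The main technical obstacle is the combinatorial bookkeeping in the second step: carefully expanding the symmetrised product of primitives over all $(n-1)!$ orderings and tracking how each signed permutation appears with its sign.
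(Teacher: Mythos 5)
Your proposal is correct and takes essentially the same route as the paper: restrict to $k=n-1$, identify the two possible Lyndon-factorisation shapes, expand the standard bracketings of the single length-two factor (or the single negating letter), and match coefficients with $\f_{ij}^{\pm}$, $\ftilde_{ij}$, $\g_{i}$, using sums and differences of the $ij$ and $\bari\barj$ eigenvectors in the rotation case; your extra multiplicity count is a harmless redundancy, since identifying the listed functions with all algorithm eigenvectors for these $(k,\bark)$ already yields the basis claim. One small clarification: in the flip case the two Lyndon words $i\barj$ and $\bari j$ do not jointly produce one function --- each individually gives a member of the family, namely $\ftilde_{ij}$ and (up to sign) $\ftilde_{ji}$, which is what makes the count $n(n-1)$ come out right.
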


\begin{table}
\centering{}%
\begin{tabular}{|c|c|c|c|}
\hline 
parity of $a$ & shuffle & subdominant eigenvalues & basis of eigenfunctions\tabularnewline
\hline 
\hline 
even & $\frac{1}{a^{n}}\oriftauplus$ or $\frac{1}{a^{n}}\oriftauminus$  & $1/a$ & $\{\f_{ij}^{+}\}\cup\{\f_{ij}^{-}\}$\tabularnewline
\hline 
\multirow{3}{*}{odd} & $\frac{1}{a^{n}}\oriftauplus$  & $1/a$ & $\{\f_{ij}^{+}\}\cup\{\f_{ij}^{-}\}\cup\{\g_{i}\}$ \tabularnewline
\cline{2-4} \cline{3-4} \cline{4-4} 
 & \multirow{2}{*}{$\frac{1}{a^{n}}\oriftauminus$} & $1/a$ & $\{\f_{ij}^{+}\}\cup\{\f_{ij}^{-}\}$ \tabularnewline
\cline{3-4} \cline{4-4} 
 &  & $-1/a$ & $\{\g_{i}\}$ \tabularnewline
\hline 
even & $\frac{1}{a^{n}}\oriftautildeplus$ or $\frac{1}{a^{n}}\oriftautildeminus$ & $1/a$ & $\{\ftilde_{ij}\}$ \tabularnewline
\hline 
\multirow{3}{*}{odd} & $\frac{1}{a^{n}}\oriftautildeplus$  & $1/a$ & $\{\ftilde_{ij}\}\cup\{\g_{i}\}$\tabularnewline
\cline{2-4} \cline{3-4} \cline{4-4} 
 & \multirow{2}{*}{$\frac{1}{a^{n}}\oriftautildeminus$ } & $1/a$ & $\{\ftilde_{ij}\}$ \tabularnewline
\cline{3-4} \cline{4-4} 
 &  & $-1/a$ & $\{\g_{i}\}$\tabularnewline
\hline 
\end{tabular}\caption{\label{tab:subdominanteigen}Subdominant eigenvalues for hyperoctahedral
riffle-shuffles, and bases for their eigenfunctions}
\end{table}

\begin{prop}
\label{prop:descentefn} Fix a positive integer $a$, and let $\{X_{t}\}$
denote $a$-handed riffle-shuffle with flip, i.e. the Markov chain
associated to $\frac{1}{a^{n}}\oriftautildeplus$ or $\frac{1}{a^{n}}\oriftautildeminus$
on $\calbn$ according to Section \ref{subsec:Hyperoctahedral-Riffle-Shuffles}.
Then the ``normalised number of descents'' functions $w\mapsto\des(w)-\frac{n-1}{2}$
is an eigenfunction of $\{X_{t}\}$ of eigenvalue $1/a$. Hence 
\[
\Expect(\des X_{t}|X_{0}=w_{0})=(1-a^{-t})\frac{n-1}{2}+a^{-t}\des(w_{0}).
\]
\end{prop}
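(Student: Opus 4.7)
The plan is to express $\des(w) - \tfrac{n-1}{2}$ as an explicit linear combination of the subdominant eigenfunctions $\ftilde_{ij}$ of Notation~\ref{notn:subdominant-efns}, and then invoke Propositions~\ref{prop:subdominant-efns} and~\ref{prop:eigenfunction-expectation}.

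The key identity I will establish is
\[
\des(w) - \frac{n-1}{2} = -\frac{1}{2}\sum_{1 \leq i < j \leq n} \ftilde_{ij}(w).
\]
To prove this, fix the order $\bar{n}\prec\dots\prec\bar{1}\prec 1\prec\dots\prec n$ on the signed alphabet used by $\des$ (so $\bari$ plays the role of $-i$, the natural convention for descents). Under this order, an eight-case verification shows that for each $1\leq i<j\leq n$, the four length-two words $\{ij,\bari j,\barj i,\barj\bari\}$ are exactly the ascents among words whose letters have absolute values $\{i,j\}$, while $\{ji,j\bari,i\barj,\bari\barj\}$ are the corresponding descents; these coincide with the $+1$ and $-1$ supports of $\ftilde_{ij}$. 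Since in a signed permutation the letters of absolute values $i$ and $j$ occupy at most one pair of adjacent positions, $\ftilde_{ij}(w)\in\{-1,0,+1\}$ records the sign of that adjacency when it occurs and vanishes otherwise. Summing over all $\binom{n}{2}$ pairs, each of the $n-1$ consecutive positions of $w$ contributes $+1$ (ascent) or $-1$ (descent), yielding $\sum_{i<j}\ftilde_{ij}(w)=(n-1)-2\des(w)$, which rearranges to the displayed identity.

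By Proposition~\ref{prop:subdominant-efns}, each $\ftilde_{ij}$ is an eigenfunction of $\{X_t\}$ with eigenvalue $1/a$ in all four cases covered by the statement ($\oriftautildeplus$ or $\oriftautildeminus$, $a$ of either parity), so their linear combination $\des-\tfrac{n-1}{2}$ is an eigenfunction with the same eigenvalue. Applying Proposition~\ref{prop:eigenfunction-expectation} gives $\Expect(\des(X_t)-\tfrac{n-1}{2}\mid X_0=w_0)=a^{-t}(\des(w_0)-\tfrac{n-1}{2})$, and rearranging produces the stated expectation. The only real obstacle is the eight-case identification above, a small bookkeeping exercise once the underlying order on the signed alphabet is fixed; no further algebraic computation is required.
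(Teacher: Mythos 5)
Your proposal is correct and takes essentially the same route as the paper: identify the $\pm1$ supports of $\ftilde_{ij}$ (for $i<j$) with ascents and descents among adjacent letters of absolute values $\{i,j\}$, sum over pairs to get ascents minus descents $=(n-1)-2\des(w)$, and then invoke Propositions \ref{prop:subdominant-efns} and \ref{prop:eigenfunction-expectation}. In fact your normalisation $\des(w)-\tfrac{n-1}{2}=-\tfrac12\sum_{i<j}\ftilde_{ij}(w)$ is the accurate form of the identity (the paper's displayed version omits the factor $-\tfrac12$ that its own ascent/descent count produces), and since scalar multiples do not affect eigenfunctions or the final expectation formula, your argument is complete.
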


\begin{proof}
We show that 
\[
\des(w)-\frac{n-1}{2}=\sum_{i<j}\ftilde_{ij}(w)
\]
for all $w\in\calbn$. Note that the subwords which cause $\ftilde_{ij}(w)=-1$,
namely $ji$, $j\bari$, $i\barj$ and $\bari\barj$, are all descents
if $i<j$. And every descent of $w$ is of one of these 4 types, for
some $i<j$. Opposingly, all consecutive subwords of two letters that
are not descents (i.e. ``ascents'') contribute 1 to $\ftilde_{ij}(w)$
for some $i<j$. Hence $\sum_{i<j}\ftilde_{ij}(w)$ is the difference
between the number of ascents and the number of descents in $w$.
Since each pair of consecutive letters in $w$ is either an ascent
or a descent, the number of ascents and descents together total $n-1$.
Hence this difference is $(n-1-\des(w))-\des(w)$. 

As for the expectation assertion: by the linearity of expectations,
then by Proposition \ref{prop:eigenfunction-expectation},
\begin{align*}
\Expect(\des X_{t}|X_{0}=w_{0}) & =\Expect(\sum_{i<j}\ftilde_{ij}(X_{t})|X_{0}=w)+\Expect\left(\frac{n-1}{2}\middle|X_{0}=w\right)\\
 & =a^{-t}\sum_{i<j}\ftilde_{ij}(w_{0})+\frac{n-1}{2}.
\end{align*}
\end{proof}

\begin{proof}[Proof of Proposition \ref{prop:subdominant-efns}]
 According to Theorems \ref{thm:flipriffle-evector}, \ref{thm:rotriffleodd-evector}
and \ref{thm:rotriffleeven-evector}, for $\frac{1}{a^{n}}\oriftauplus$,
$\frac{1}{a^{n}}\oriftauminus$, $\frac{1}{a^{n}}\oriftautildeplus$
and $\frac{1}{a^{n}}\oriftautildeminus$:

\begin{itemize}
\item When $a$ is even, the only subdominant eigenvalue is $1/a$, and
it corresponds to $k=n-1$ and $\bark=0$, i.e. the corresponding
eigenvectors are formed from $n-1$ $\tau$-invariant (or $\tautilde$-invariant)
primitives, and no $\tau$-negating (or $\tautilde$-negating) primitives.
\item When $a$ is odd, the subdominant eigenvalues all correspond to $k=n-1$
and $\bark$ can take any value, i.e. the corresponding eigenvectors
are formed from $n-1$ $\tau$-invariant (or $\tautilde$-invariant)
primitives, and any number of $\tau$-negating (or $\tautilde$-negating)
primitives. However, since $k+\bark\leq n$ (as each primitive has
degree at least one, and the degree of their product is $n$), the
only possible values of $\bark$ are 0 and 1. When $\bark=0$, the
eigenvalue is $1/a$ for all four operator families. When $\bark=1$,
the eigenvalue is $1/a$ for $\frac{1}{a^{n}}\oriftauplus$ and $\frac{1}{a^{n}}\oriftautildeplus$,
and is $-1/a$ for $\frac{1}{a^{n}}\oriftauminus$, and $\frac{1}{a^{n}}\oriftautildeminus$.
\end{itemize}
The proof will be completed by showing the following:
\begin{enumerate}
\item $\{\ftilde_{ij}\ |\:i<j\}\cup\{-\ftilde_{ij}\ |\:j<i\}$ are precisely
all the eigenvectors, for $\oriftautildeplus$ and $\oriftautildeminus$,
given by Algorithm \ref{algo:shuffleevector} when $k=n-1$ and $\bark=0$.
\item $\{\f_{ij}^{+}\ |\:i<j\}$ (resp. $\{\f_{ij}^{-}\ |\:i<j\}$) are,
up to scaling, the sums (resp. differences) of pairs of eigenvectors,
for $\oriftauplus$ and $\oriftauminus$, given by Algorithm \ref{algo:shuffleevector}
when $k=n-1$ and $\bark=0$.
\item $\{\g_{i}\}$ are precisely all the eigenvectors, for $\oriftautildeplus$,
$\oriftautildeminus$, $\oriftauplus$ and $\oriftauminus$ when $a$
is odd, given by Algorithm \ref{algo:shuffleevector} when $k=n-1$
and $\bark=1$.
\end{enumerate}
And here are the proofs of these three assertions.
\begin{enumerate}
\item $k=n-1$ and $\bark=0$ means that the indexing word has $n-1$ Lyndon
factors, whose standard-bracketings are all $\tautilde$-invariant.
So, by the third line of Corollary \ref{cor:stdbrac-invariantornegating},
each Lyndon factor consists of an odd number of positive letters.
Necessarily, $n-2$ of these factors must be single positive letters,
and the remaining factor consists of one positive and one negative
letter - i.e. it is $i\bar{j}$ or $\bari j$, where $i<j$. Take
the first case; the standard-bracketing of $i\bar{j}$ is 
\begin{equation}
[i+\bari,j-\barj]=ij-i\barj+\bari j-\bari\barj-ji-j\bari+\barj i+\barj i.\label{eq:ftilde-stdbrac}
\end{equation}
Observe that, if $w$ consists precisely of $i$ or $\bari$ once
and $j$ or $\barj$ once, then $\ftilde_{ij}(w)$ is precisely the
coefficient of $w$ in (\ref{eq:ftilde-stdbrac}).

Since all primitives involved are $\tautilde$-invariant, the formulas
in parts a, b, c of Theorem \ref{thm:flipriffle-evector} all agree:
the common eigenvector for $\oriftautildeplus$ and $\oriftautildeminus$
is the symmetrised product of (\ref{eq:ftilde-stdbrac}) with $r+\bar{r}$,
ranging over all $r\neq i,j$. In this product, the terms in (\ref{eq:ftilde-stdbrac})
stay as consecutive subwords, and the letters distinct from $i,j$
may have any order, with equal coefficient. Thus coefficient of a
word in this eigenvector is given by $\ftilde_{ij}$.

As for $\bari j$, its standard-bracketing is $[i-\bari,j+\barj]=-[j+\barj,i-\bari]$,
which differs from (\ref{eq:ftilde-stdbrac}) only by a global sign
and the exchange of $i$ and $j$. So, by the same argument as above,
$-\ftilde_{ji}(w)$ describes the coefficient of $w$ in this eigenvector.

\item Similar to case i) above, these eigenvectors are produced from $n-1$
Lyndon factors whose standard-bracketing are all $\tau$-invariant.
By the first line of Corollary \ref{cor:stdbrac-invariantornegating},
each Lyndon factor consists of an even number of negative letters.
As in case i), $n-2$ of the factors are single positive letters,
and the remaining factor is $ij$ or $\bari\barj$. Note that 
\begin{align*}
\stdbrac(ij) & =ij+\bari j+i\barj+\bari\barj-ji-j\bari-\barj i-\barj\bari;\\
\stdbrac(\bari\barj) & =ij-\bari j-i\barj+\bari\barj-ji+j\bari+\barj i-\barj\bari.
\end{align*}
So $p^{+}:=\frac{1}{2}(\stdbrac(ij)+\stdbrac(\bari\barj))$ and $p^{-}:=\frac{1}{2}(\stdbrac(ij)-\stdbrac(\bari\barj))$
have coefficients as decribed by $\f_{ij}^{+}$ and $\f_{ij}^{-}$,
respectively, when $w$ consists only of $i$ or $\bari$ once and
$j$ or $\barj$ once. The extension to words longer than $w$ is
as in case i): take the symmetrised product of $p^{+}$ or $p^{-}$
with all $r+\bar{r}$ for $r\neq i,j$, following Theorems \ref{thm:rotriffleodd-evector}
and \ref{thm:rotriffleeven-evector}. Note that (with the obvious
notation)
\[
\sym\{p^{+}\}\cup\{r+\bar{r}|\ r\neq i,j\}=\sym\{\stdbrac(ij)\}\cup\{r+\bar{r}|\ r\neq i,j\}+\sym\{\stdbrac(\bari\barj)\}\cup\{r+\bar{r}|\ r\neq i,j\}
\]
and similarly for $p^{-}$, because taking symmetrised product is
linear in each argument. 
\item $k=n-1$ and $\bark=1$ means that the indexing word has $n$ Lyndon
factors, so each factor must be a single letter. From Corollary \ref{cor:stdbrac-invariantornegating},
the standard-bracketing of a single positive letter is both $\tau$-invariant
and $\tautilde$-invariant, and the standard-bracketing of a single
negative letter is both $\tau$-negating and $\tautilde$-negating.
Hence, these $n$ Lyndon factors are precisely one negative letter,
say $\bari$, and $n-1$ positive letters. Since $\bark=1$, the formula
in Theorem \ref{thm:flipriffle-evector}.i.c (for $\oriftautildeminus$)
and (\ref{eq:rotriffle-evector}) (for $\oriftauplus$ and $\oriftauminus$)
both simplify to $(i-\bari)s+s(i-\bari)$, where $s$ denotes the
symmetrised product of $r+\bar{r}$ over all $r\neq i$. The coefficient
of a word in this eigenvector is given by $\g_{i}$.
\end{enumerate}
\end{proof}

 \printbibliography[heading=bibintoc]
\end{document}